\newcommand{\blue}[1]{\textcolor{blue}{#1}}
\newcommand{\at}[1]{}
\newcommand{\Gr}{{\rm gph\,}}
\newcommand{\co}{{\rm conv\,}}
\newcommand{\cl}{{\rm cl\,}}
\newcommand{\xb}{\bar x}
\newcommand{\yb}{\bar y}
\newcommand{\zb}{\bar z}
\newcommand{\wb}{\bar w}
\newcommand{\yba}{\yb^\ast{}}
\newcommand{\lb}{\bar\lambda}
\newcommand{\I}{{\cal I}}
\newcommand{\R}{\mathbb{R}}
\newcommand{\N}{\mathbb{N}}
\newcommand{\norm}[1]{\|#1\|}
\newcommand{\dist}[1]{{\rm d}(#1)}
\newcommand{\distb}[1]{{\rm d}\big(#1\big)}
\newcommand{\B}{{\cal B}}
\newcommand{\E}{{\cal E}}
\newcommand{\Sp}{{\cal S}}
\newcommand{\mv}{\,\vert\, }
\newcommand{\oo}{o}
\newcommand{\skalp}[1]{\langle #1\rangle}
\newcommand{\argmax}{\mathop{\rm arg\,max}\limits}
\newcommand{\vek}[1]{\left(\begin{array}{c}#1\end{array}\right)}
\newtheorem{definition}{Definition}
\newtheorem{theorem}{Theorem}
\newtheorem{lemma}{Lemma}
\newtheorem{example}{Example}
\newtheorem{corollary}{Corollary}
\newtheorem{proposition}{Proposition}
\begin{document}
\title{New constraint qualifications for
 mathematical programs with equilibrium
constraints via variational analysis}
\author{Helmut Gfrerer\thanks{Institute of Computational Mathematics, Johannes Kepler University Linz, A-4040 Linz, Austria, e-mail:
helmut.gfrerer@jku.at.} \and Jane J. Ye\thanks{Department of Mathematics
and Statistics, University of Victoria, Victoria, B.C., Canada V8W 2Y2, e-mail: janeye@uvic.ca.}}
\date{}
\maketitle
\begin{abstract}
In this paper, we study the mathematical program with equilibrium constraints (MPEC) formulated as a mathematical program with  a parametric generalized equation involving {the} regular normal cone. Compared with the usual way of formulating MPEC  through a KKT condition, this formulation has {the} advantage that it does not involve extra multipliers as new variables, and it usually requires weaker assumptions on the problem data. {U}sing the so-called first order sufficient condition for metric subregularity, we derive verifiable sufficient conditions for the metric subregularity of the involved set-valued mapping, or equivalently the calmness of the perturbed generalized equation mapping.

\vskip 10 true pt

\noindent {\bf Key words}: mathematical programs with equilibrium constraints,  constraint qualification, metric subregularity, calmness.

\vskip 10 true pt

\noindent {\bf AMS subject classification}: 49J53, 90C30, 90C33, 90C46.

\end{abstract}

\section{Introduction}
A mathematical program with equilibrium constraints  (MPEC) usually refers to an optimization problem in which the essential constraints are defined by a parametric variational inequality or complementarity system.
Since many  equilibrium phenomena that arise from engineering and economics are  characterized by either  an optimization problem or a variational inequality, this justifies the name mathematical program with equilibrium constraints (\cite{Luo-Pang-Ralph, Out-Koc-Zowe}).
During the last two decades, more and more applications for MPECs have been found and there has been much progress made in both theories and algorithms for solving  MPECs.

For easy discussion, consider the following mathematical program with  a variational inequality constraint
\begin{eqnarray}
\mbox{(MPVIC)}\qquad\min_{(x,y)\in C} &&F(x,y)\nonumber\\
 \mbox{s.t.}&& \langle \phi(x,y), y'-y\rangle\geq 0 \quad \forall y'\in \Gamma, \label{VI}
\end{eqnarray}
where $C\subset \R^n\times\R^m$, $\Gamma:=\{y\in \R^m | g(y)\leq 0\}$,  $F:\R^n\times\R^m\to\R$, $\phi:\R^n\times \R^m\to\R^m$, $g:\R^m\to\R^q$ are sufficiently smooth. If the set $\Gamma$ is convex, then MPVIC can be equivalently written  as a mathematical program with a generalized equation constraint
 \begin{eqnarray*}
\mbox{(MPGE)}\qquad\min_{(x,y)\in C} &&F(x,y)\\
 \mbox{s.t.}&& 0\in \phi(x,y)+ {N}_\Gamma(y),
\end{eqnarray*}
where $N_\Gamma(y)$  is the normal cone to set $\Gamma$ at $y$ in the sense of convex analysis. If $g$ is affine or certain constraint qualification such as the Slater condition  holds for the constraint $g(y)\leq 0$, then it is known that
$  {N}_\Gamma(y)=\nabla g(y)^T N_{\R_-^q}(g(y)).$
Consequently
\begin{equation}\label{GE-equiv}
0\in \phi(x,y) +{N}_\Gamma(y) \Leftrightarrow \exists \lambda: 0\in \left (\phi(x,y)+\nabla g(y)^T\lambda, g(y)\right )+N_{\R^m \times \R_+^q}(y, \lambda),
\end{equation}
where $\lambda$ is referred to a multiplier.
This suggests to consider the mathematical program with a complementarity constraint
\begin{eqnarray*}
\mbox{(MPCC)}\qquad\min_{(x,y)\in C, \lambda\in \R^q} &&F(x,y)\\
 \mbox{s.t.}&& 0\in \left ( \phi(x,y)+\nabla g(y)^T\lambda, g(y)\right )+N_{\R^m \times \R_+^q}(y, \lambda).
\end{eqnarray*}
In the case where the equivalence (\ref{GE-equiv}) holds for  a unique multiplier $\lambda$ for each $y$, (MPGE) and (MPCC) are obviously equivalent while in the case where the multipliers are not unique then the two problems are not {necessarily} equivalent if the local optimal solutions are considered (see Dempe and Dutta \cite{Dam-Dut} in the context of bilevel programs). Precisely, it may be possible that for a local solution $(\bar x, \bar y, \bar \lambda)$ of (MPCC), the pair $(\bar x, \bar y)$ is not a local solution of (MPGE). This is a serious drawback for using {the} MPCC reformulation, since a numerical method computing a stationary point for (MPCC) may not have anything to do with the solution to the original MPEC. This shows that whenever possible, one should consider solving  problem (MPGE)  instead of  problem (MPCC). Another fact we want to mention  is that
in many equilibrium problems, the constraint set $\Gamma$ or the function $g$ may not be convex. In this case, if $y$ solves the variational inequality (\ref{VI}), then  $y'=y$ is a global minimizer of the optimization problem:
$\displaystyle \min_{y'} ~\langle \phi(x,y), y' \rangle \
 \mbox{ s.t. }  y'\in \Gamma,$
 and hence  by Fermat's rule (see, e.g., \cite[Theorem 10.1]{RoWe98})  it is a solution of the generalized equation
\begin{equation}\label{ge}
0\in \phi(x,y)+ \widehat{N}_\Gamma(y),\end{equation}
 where $\widehat{N}_\Gamma(y)$ is the regular normal cone to $\Gamma$ at $y$  (see Definition \ref{normalcone}). In the nonconvex case, by replacing the original variational inequality constraint (\ref{VI}) by the generalized equation (\ref{ge}), the feasible region is enlarged and the resulting MPGE may not be equivalent to the original MPVIC. However,  if the solution $(\xb,\yb)$ of MPGE is feasible for the original MPVIC, then it must be a solution of the original MPVIC; see \cite{Bouza} for this approach in the context of bilevel programs.

Based on the above discussion, in this paper we consider MPECs in the form
\begin{eqnarray*}
\mbox{(MPEC)}\qquad\min
 &&F(x,y)\\
 \mbox{s.t.}&& 0\in \phi(x,y)+ \widehat{N}_\Gamma(y),
 \\&&G(x,y)\leq 0,
\end{eqnarray*}
where $\Gamma$ is possibly non-convex and $G:\R^n\times\R^m\to\R^p$ is smooth.

Besides of the issue of equivalent problem formulations, one has to consider constraint qualifications as well. This task is of particular importance for deriving optimality conditions. For the problem (MPCC) there exist a lot of constraint qualifications from the MPEC-literature ensuring the Mordukhovich (M-)stationarity of locally optimal solutions. The weakest one of these constraint qualifications appears to be MPEC-GCQ (Guignard constraint qualification) as introduced by Flegel and Kanzow \cite{FleKan05}, see \cite{FleKanOut07} for a proof of  M-stationarity of local optimally solutions under MPEC-GCQ. For the problem (MPEC) it was shown by Ye and Ye \cite{YeYe} that calmness of the perturbation mapping associated with the constraints of (MPEC) (called pseudo upper-Lipschitz continuity in \cite{YeYe}) guarantees M-stationarity of solutions. \cite{Ad-Hen-Out} has compared the two  formulations (MPEC) and (MPCC)  in terms of calmness. {The authors} pointed out {there} that, very often, the calmness condition related to (MPEC) is satisfied at some $(\bar x, \bar y)$ while the one for (MPCC) are not fulfilled at $(\bar x,\bar y,\lambda)$ for certain multiplier $\lambda$.  In particular \cite[Example 6]{Ad-Hen-Out}  shows that it may be possible that the constraint for (MPEC) satisfies the calmness condition at $(\bar x,\bar y, 0)$ while the one for corresponding (MPCC) does not satisfy the calmness condition at $(\bar x,\bar y, \lambda, 0)$ for any multiplier $\lambda$. In this paper we first show that if  multipliers are not unique then {the} MPEC Mangasarian-Fromovitz constraint qualification (MFCQ)  never holds for problem (MPCC). Then  we present an example for which MPEC-GCQ is violated at $(\bar x,\bar y, \lambda, 0)$ for any multiplier $\lambda$ while the calmness holds for the corresponding (MPEC) at $(\bar x,\bar y,0)$. Note that  in contrast to
\cite[Example 6]{Ad-Hen-Out},  $\Gamma$ in our example is even convex.  However, very little is known how to verify the calmness for (MPEC) when the multiplier $\lambda$ is not unique. When $\phi$, $g$ and $G$ are affine, calmness follows  simply by Robinson's result on polyhedral multifunctions \cite{Robinson81}. Another approach is to verify calmness by showing the stronger Aubin property (also called  pseudo Lipschitz continuity or Lipschitz-like property) via the so-called Mordukhovich criterion, cf. \cite{Mor}. However, the Mordukhovich criterion involves the limiting coderivate of $\widehat N_\Gamma(\cdot)$, which is very difficult to compute in the case of nonunique $\lambda$; see \cite{GfrOut14b}.

 The main goal of this paper is to derive a simply verifiable criterion for the so-called metric subregularity constraint qualification (MSCQ); see Definition \ref{DefMetrSubregCQ}, which is equivalent to calmness. Our sufficient condition for MSCQ involves only first-order derivatives of $\phi$ and $G$ and derivatives up to the second-order of $g$ at $(\xb,\yb)$  and is therefore efficiently checkable. Our approach is mainly based on the sufficient conditions for metric subregularity as recently developed in \cite{Gfr11,Gfr13a,Gfr14b,GfrKl16} and some implications of metric subregularity which can be found in \cite{GfrMo16,GfrOut15}. A special feature is that the imposed constraint qualification on both the lower level system $g(y)\leq0$ and the upper level system $G(x,y)\leq0$ is only MSCQ, which is much weaker than MFCQ usually required.

\if{\blue{\noindent Recently a few papers have been devoted to MPECs in the  form of (MPEC).  \cite{HenOutSur} computed the regular coderivative for the solution mapping of the generalized equation (\ref{ge}) and used it to derive an S-stationary condition. \cite{Ad-Hen-Out} has compared the two  formulations (MPEC) and (MPCC)  in terms of calmness of the perturbation mapping associated with the generalized equations  (also called metric subregularity constraint qualification (MSCQ) in this paper; see Definition \ref{DefMetrSubregCQ}).  They pointed out that, very often, the calmness qualification condition related to (MPEC) is satisfied while the one for (MPCC) for certain multipliers $\lambda$ are not fulfilled. In particular \cite[Example 6]{Ad-Hen-Out}  shows that it may be possible that the constraint for the   MPEC satisfies the calmness condition at $(\bar x,\bar y, 0)$ while the one for corresponding MPCC does not satisfy the calmness condition at $(\bar x,\bar y, \lambda, 0)$ for any $\lambda\in \Lambda(\bar x, \bar y)$.
Recently \cite[Theorems 5 and 6]{GfrOut14} derived the formula for  the graphical derivative and the regular coderivative of the solution map to (MPEC) and consequently the strong (S-) stationary conditions for (MPEC)  under MSCQ and some  constraint qualifications that are weaker than the one given in \cite{HenOutSur}, cf. \cite[Remark 1]{GfrOut14}.  In order to obtain an Mordukhovich (M-) stationary condition,
it was shown in  (\cite{YeYe}) that MSCQ  is a constraint qualification. But for the general problem in the form of (MPEC), very little is known how to  verify MSCQ in case when the multipliers are not unique, except the case when $g$ is affine. In this paper we fill this gap by providing a verifiable sufficient condition for MSCQ for nonlinear $g$ under very weak assumptions.
}}\fi

 We organize our paper as follows. Section 2 contains the preliminaries and preliminary results. In section 3 we discuss the difficulties involved in formulating MPECs as (MPCC). Section
4 gives new verifiable sufficient conditions for MSCQ.

 The following {notation} will be used throughout the paper. We denote by $\B_{\R^q}$ the closed unit ball in $\R^q$ while when 
no confusion arises we denote it by $ \B$. By $\B(\zb; r)$ we denote the closed ball centered at $\zb$ with radius $r$.  $\Sp_{\R^q}$ is the unit sphere in $\R^q$.  For a matrix
$A$, we denote by $A^T$ its transpose. The inner product of two vectors $x, y$ is denoted by
$x^T y$ or $\langle x,y\rangle$ and by $x\perp y$ we mean $\langle x, y\rangle =0$. Let $\Omega \subset \R^d$ and $z \in \R^d$, we denote by $\dist{z, \Omega}$ the distance from $z $ to set $\Omega$. The polar cone of a set $\Omega$ is
$\Omega^\circ=\{x|x^Tv\leq 0 \ \forall v\in \Omega\}$ and $\Omega^\perp$ denotes the orthogonal complement to $\Omega$. For a set $\Omega$, we denote by $\co \Omega$ and $\cl\Omega$ the convex hull
and the closure  of $\Omega$ respectively. For a differentiable mapping $P:\mathbb R^d\rightarrow \mathbb R^s$, we denote by $\nabla P(z)$ the Jacobian matrix of $P$ at $z$ if $s>1$ and the gradient vector if $s=1$. For a function $f:\R^d \rightarrow \R$, we denote by $\nabla^2 f(\bar z)$ the Hessian matrix of $f$ at $\bar z$.  Let $M:\R^d\rightrightarrows\R^s$ be an arbitrary set-valued mapping, we denote its graph by $ {\rm gph}M:=\{(z,w)| w\in M(z)\}.$ $o:\R_+\rightarrow \R$ denotes a function with the property that $o(\lambda)/\lambda\rightarrow 0$ when $\lambda\downarrow 0$.

\section{{Basic definitions} and preliminary results}
In this section we gather some preliminaries and preliminary results in variational analysis that will be needed in the paper. The reader may find more details in the monographs \cite{Clarke,Mor,RoWe98} and  in the papers we refer to.
\begin{definition}
Given a set
$\Omega\subset\mathbb R^d$ and a point $\bar z\in\Omega$,
the (Bouligand-Severi) {\em tangent/contingent cone} to $\Omega$
at $\bar z$ is a closed cone defined by
\begin{equation*}\label{normalcone}
T_\Omega(\bar z)
:=\Big\{u\in\mathbb R^d\Big|\;\exists\,t_k\downarrow
0,\;u_k\to u\;\mbox{ with }\;\bar z+t_k u_k\in\Omega ~\forall ~ k\}.
\end{equation*}
The (Fr\'{e}chet) {\em regular normal cone} and the (Mordukhovich) {\em limiting/basic normal cone} to $\Omega$ at $\bar
z\in\Omega$ are  defined by
\begin{eqnarray}
&& \widehat N_\Omega(\bar
z):=\Big\{v^\ast\in\R^d\Big|\;\limsup_{z\stackrel{\Omega}{\to}\bar
z}\frac{\skalp{ v^\ast,z-\bar z}}{\|z-\bar z\|}\le
0\Big\} \nonumber \\
\mbox{and }  &&
N_\Omega(\bar z):=\left \{z^\ast \mv \exists z_{k}\stackrel{\Omega}{\to}\zb \mbox{ and } z^\ast_k\rightarrow z^\ast \mbox{ such that } z^\ast_{k}\in \widehat{N}_{\Omega}(z_k) \  \forall k \right \}
\nonumber
\end{eqnarray}
respectively.

\end{definition}
\noindent {Note that $\widehat N_\Omega(\bar z)=(T_\Omega(\bar z))^\circ$ and w}hen  the set $\Omega$ is convex, the tangent/contingent cone and the regular/limiting normal cone reduce to
the classical tangent cone and normal cone of convex analysis
respectively.

It is easy to see that $u\in T_\Omega(\bar z)$ if and only if $\liminf_{t\downarrow 0} t^{-1}\dist{\bar z+tu, \Omega}=0$.
Recall that a set $\Omega$ is said to be geometrically  derivable at a point $\zb\in \Omega$ if the tangent cone coincides with the derivable cone at $\xb$, i.e., $u\in T_\Omega(\bar z)$ if and only if $\lim_{t\downarrow 0} t^{-1}\dist{\bar z+tu, \Omega}=0$; see e.g. \cite{RoWe98}.
From the definitions of various tangent cones, it is easy to see that if a set $\Omega$ is Clarke regular in the sense of \cite[Definition 2.4.6]{Clarke} then it must be geometrically derivable and the converse relation is in general false. The following proposition therefore improves the rule of tangents to product sets given in \cite[Proposition 6.41]{RoWe98}. The proof is omitted since it follows from the definitions of the tangent cone and 
derivability.
\begin{proposition}[Rule of Tangents to Product Sets]\label{productset}  Let $\Omega=\Omega_1\times \Omega_2$ with $\Omega_1\subset \R^{d_1}, \Omega_2\in C^{d_2}$ closed. Then at any $\zb=(\zb_1,\zb_2)$ with $\zb_1\in \Omega_1, \zb_2\in \Omega_2$, one has
$$T_\Omega(\zb )\subset T_{\Omega_1}(\zb_1 )\times T_{\Omega_2}(\zb_2 ).$$
Furthermore the  equality holds if at least one of  sets $\Omega_1, \Omega_2$  is  geometrically  derivable.
\end{proposition}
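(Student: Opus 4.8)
The plan is to establish the two assertions separately: first the inclusion for arbitrary closed factors, and then the reverse inclusion under geometric derivability of one factor.

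For the inclusion $T_\Omega(\bar z)\subset T_{\Omega_1}(\bar z_1)\times T_{\Omega_2}(\bar z_2)$, I would argue directly from the sequential definition of the contingent cone. Given $u=(u_1,u_2)\in T_\Omega(\bar z)$, pick $t_k\downarrow 0$ and $u^k=(u^k_1,u^k_2)\to u$ with $\bar z+t_k u^k\in\Omega$. Since $\Omega$ is the Cartesian product, this is equivalent to $\bar z_1+t_k u^k_1\in\Omega_1$ and $\bar z_2+t_k u^k_2\in\Omega_2$ for all $k$; as $u^k_i\to u_i$, the same pair $(t_k)$ witnesses $u_1\in T_{\Omega_1}(\bar z_1)$ and $u_2\in T_{\Omega_2}(\bar z_2)$. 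This direction uses neither derivability nor any special norm.

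For the equality, assume without loss of generality that $\Omega_1$ is geometrically derivable at $\bar z_1$, and take any $u=(u_1,u_2)$ in the product of the two factor tangent cones. Here I would switch to the distance characterizations recalled above and work with the Euclidean norm, for which the distance to a product set splits as $\dist{(a_1,a_2),\Omega}^2=\dist{a_1,\Omega_1}^2+\dist{a_2,\Omega_2}^2$. Because $u_2\in T_{\Omega_2}(\bar z_2)$, the criterion $\liminf_{t\downarrow0}t^{-1}\dist{\bar z_2+t u_2,\Omega_2}=0$ supplies a sequence $t_k\downarrow0$ along which $t_k^{-1}\dist{\bar z_2+t_k u_2,\Omega_2}\to0$. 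Because $\Omega_1$ is geometrically derivable and $u_1\in T_{\Omega_1}(\bar z_1)$, the stronger statement $\lim_{t\downarrow0}t^{-1}\dist{\bar z_1+t u_1,\Omega_1}=0$ holds, so in particular $t_k^{-1}\dist{\bar z_1+t_k u_1,\Omega_1}\to0$ along the very same sequence. Adding squares and taking square roots then gives $t_k^{-1}\dist{\bar z+t_k u,\Omega}\to0$, whence $\liminf_{t\downarrow0}t^{-1}\dist{\bar z+tu,\Omega}=0$ and $u\in T_\Omega(\bar z)$ by the contingent-cone criterion. Combined with the first part this yields equality.

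I expect the only genuinely delicate point to be the synchronization of the two approximating sequences. For a mere tangent vector the distance quotient only has liminf zero, so two arbitrary tangent directions would guarantee two possibly different subsequences, and these need not be compatible when their squared distances are summed. The role of geometric derivability is precisely to upgrade the liminf to a full limit for one factor, so that whatever subsequence the non-derivable factor forces upon us, the derivable factor automatically cooperates along it. A secondary but routine observation is that the statement is norm-independent, so using the Euclidean norm to exploit the clean Pythagorean splitting of the product distance entails no loss of generality.
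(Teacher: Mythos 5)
Your proof is correct: the paper omits this proof precisely because it ``follows from the definitions of the tangent cone and derivability,'' and your argument is exactly that definition-based argument, including the one genuinely delicate point — that geometric derivability upgrades the liminf to a full limit for one factor, so it cooperates along whatever sequence $t_k\downarrow 0$ the other factor provides. The inclusion direction, the Euclidean splitting of the product distance, and the norm-independence remark are all sound, so nothing is missing.
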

The following directional version of the limiting normal cone  was introduced in \cite{Gfr13a}.
\begin{definition}Given a set
$\Omega\subset\mathbb R^d$, a point $\bar{z}\in \Omega$
and  a direction $w\in \mathbb{R}^{d}$, the {\em limiting normal cone} to $\Omega$ in  direction $w$ at $\bar{z}$ is defined by
\[
N_{\Omega}(\bar{z}; w):=\left \{z^{*} | \exists t_{k}\downarrow 0, w_{k}\rightarrow w, z^{*}_{k}\rightarrow z^{*}: z^{*}_{k}\in \widehat{N}_{\Omega}(\bar{z}+ t_{k}w_{k}) \ \forall k \right \}.
\]
\end{definition}
By definition it is easy to see that $N_{\Omega}(\bar{z}; 0)=N_{\Omega}(\bar{z})$  and $N_{\Omega}(\bar{z}; u)=\emptyset$ if $u\not \in T_{\Omega}(\bar{z})$.
Further by \cite[Lemma 2.1]{Gfr14b}, if $\Omega$ is a union of finitely many closed convex sets, then  one  has the following relationship between the limiting normal cone and its directional version.
\begin{proposition}\cite[Lemma 2.1]{Gfr14b}\label{relationship}
Let $\Omega\subset \R^d$ be a union of finitely many closed convex sets, $\bar z\in \Omega, u\in \R^d$. Then
$N_{\Omega}(\bar{z}; u)\subset  N_{\Omega}(\bar{z})\cap \{u\}^\perp $
and the equality holds if
 the set $\Omega$ is convex and $u\in T_\Omega(\bar z)$.
\end{proposition}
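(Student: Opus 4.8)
The statement splits into the inclusion $N_\Omega(\bar z;u)\subset N_\Omega(\bar z)\cap\{u\}^\perp$, which I would prove for any finite union of closed convex sets, and the reverse inclusion in the convex case. For the inclusion, take $z^*\in N_\Omega(\bar z;u)$ with witnessing sequences $t_k\downarrow 0$, $u_k\to u$ and $z_k^*\to z^*$ satisfying $z_k^*\in\widehat N_\Omega(z_k)$, where $z_k:=\bar z+t_k u_k\in\Omega$. Since $z_k\to\bar z$, the definition of the limiting normal cone immediately gives $z^*\in N_\Omega(\bar z)$. To obtain $\skalp{z^*,u}=0$, write $\Omega=\bigcup_{i=1}^N\Omega_i$ with each $\Omega_i$ closed convex; by the pigeonhole principle a subsequence of $(z_k)$ stays in a single $\Omega_i$, and closedness yields $\bar z\in\Omega_i$. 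From $\Omega_i\subset\Omega$ one has $T_{\Omega_i}(z_k)\subset T_\Omega(z_k)$, hence $z_k^*\in\widehat N_\Omega(z_k)=T_\Omega(z_k)^\circ\subset T_{\Omega_i}(z_k)^\circ=N_{\Omega_i}(z_k)$, the last being the convex normal cone. Testing $\skalp{z_k^*,w-z_k}\le 0$ at $w=\bar z\in\Omega_i$ gives $\skalp{z_k^*,u_k}\ge 0$, so $\skalp{z^*,u}\ge 0$ in the limit; conversely $u\in T_{\Omega_i}(\bar z)$ while the closed graph of the convex normal-cone map gives $z^*\in N_{\Omega_i}(\bar z)=T_{\Omega_i}(\bar z)^\circ$, whence $\skalp{z^*,u}\le 0$. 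Together these force $\skalp{z^*,u}=0$.

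For the reverse inclusion I assume $\Omega$ convex and $u\in T_\Omega(\bar z)$, and take $z^*\in N_\Omega(\bar z)\cap\{u\}^\perp$; by homogeneity of both cones I may assume $\norm{z^*}=1$ (the case $z^*=0$ being trivial). The plan is to manufacture the required sequences by projecting a base point pushed simultaneously along $u$ and along $z^*$. Since $\Omega$ is convex it is geometrically derivable, so $\rho(t):=\dist{\bar z+tu,\Omega}=o(t)$. I choose a scale $c_t>0$ interpolating the two rates, $\rho(t)\ll c_t\ll t$ (for instance $c_t:=\sqrt{t\,\rho(t)}+t^2$), set $p_t:=\bar z+tu+c_t z^*$, and let $z_t:=P_\Omega(p_t)$ be its metric projection with residual $r_t:=p_t-z_t\in N_\Omega(z_t)$. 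A triangle-inequality estimate gives $\norm{r_t}=\dist{p_t,\Omega}\le c_t+\rho(t)=c_t(1+o(1))$, while $z^*\in N_\Omega(\bar z)$, $z_t\in\Omega$ and $\skalp{z^*,u}=0$ yield $\skalp{r_t,z^*}\ge c_t$. Hence $c_t\le\skalp{r_t,z^*}\le\norm{r_t}\le c_t(1+o(1))$, which forces the unit normals $r_t/\norm{r_t}\to z^*$; and since $z_t-\bar z=tu+c_t z^*-r_t=tu+o(t)$, we have $z_t=\bar z+tu_t$ with $u_t\to u$. Taking $z_t^*:=r_t/\norm{r_t}\in N_\Omega(z_t)=\widehat N_\Omega(z_t)$ along any $t_k\downarrow 0$ then exhibits $z^*\in N_\Omega(\bar z;u)$, giving the claimed equality.

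The routine parts are the normal-cone inclusions and the closed-graph passage to the limit in the first half. The delicate point is the second half: one must push the base point in the $z^*$-direction by an amount $c_t$ that is large compared with the a priori uncontrolled projection residual $\rho(t)=o(t)$, yet small compared with the travel $t$ along $u$, so that the outward normal $r_t$ is dominated by its $z^*$-component and its direction converges to the prescribed $z^*$ rather than to some other normal orthogonal to $u$ (as a naive projection of $\bar z+tu$ could produce at a corner). Verifying that the interpolating choice of $c_t$ simultaneously achieves $r_t/\norm{r_t}\to z^*$ and $u_t\to u$ is the crux, and it is exactly here that geometric derivability of convex sets is used.
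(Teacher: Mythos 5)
Your proof is correct. Note that the paper itself offers no argument for this proposition --- it is imported verbatim as \cite[Lemma 2.1]{Gfr14b} --- so there is no in-paper proof to compare against; your write-up is a valid self-contained substitute. Both halves check out: for the inclusion, the pigeonhole reduction to a single convex piece $\Omega_i$ containing a tail of the sequence, the polarity chain $\widehat N_\Omega(z_k)=T_\Omega(z_k)^\circ\subset T_{\Omega_i}(z_k)^\circ=N_{\Omega_i}(z_k)$, and the two opposite inequalities $\skalp{z^*,u}\geq 0$ (testing at $w=\bar z$) and $\skalp{z^*,u}\leq 0$ (from $u\in T_{\Omega_i}(\bar z)$ and the closed graph of the convex normal-cone map) are all sound. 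For the converse, the delicate point you flag is real --- naively projecting $\bar z+tu$ can yield a zero residual or a normal direction unrelated to $z^*$ --- and your fix of projecting $\bar z+tu+c_tz^*$ with an interpolated scale $\rho(t)\ll c_t\ll t$ does exactly what is needed: the sandwich $c_t\leq\skalp{r_t,z^*}\leq\norm{r_t}\leq c_t(1+o(1))$ forces the unit residual to converge to $z^*$ (via $\norm{r_t/\norm{r_t}-z^*}^2=2-2\skalp{r_t/\norm{r_t},z^*}\to 0$), while $c_t=o(t)$ keeps the base points tangent along $u$. One cosmetic remark: only the $\{u\}^\perp$ part of the inclusion uses the union-of-convex-sets structure; the containment $N_\Omega(\bar z;u)\subset N_\Omega(\bar z)$ holds for arbitrary closed $\Omega$ directly from the definitions, as your first sentence in effect shows.
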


Next we consider constraint qualifications for a constraint system of the form
\begin{equation}
  \label{EqGenOptProbl}
z\in\Omega:=\{z\mv P(z)\in D\},
\end{equation}
where  $P:\R^d\to\R^s$ and $D\subset\R^s$ is closed.
\begin{definition}[cf. \cite{FleKanOut07}]Let $\zb\in \Omega$ where  $\Omega$ is defined as in (\ref{EqGenOptProbl}) with  $P$  smooth, and $T_\Omega^{\rm lin}(\bar z)$ be the linearized cone of $\Omega$ at $\bar z$ defined by
\begin{equation}
T_\Omega^{\rm lin}(\zb)=\{w|\nabla P(\bar z) w\in T_D (P(\bar z))\}.
\label{lcone}
\end{equation}
We say that the {\em generalized Abadie constraint qualification} (GACQ)  and the {\em generalized Guignard constraint qualification} (GGCQ) hold at $\zb$, if
\[T_\Omega(\zb)=T_\Omega^{\rm lin}(\zb) \mbox{ and } {(T_\Omega(\bar z))^\circ}=(T_\Omega^{\rm lin}(\zb))^\circ\]
respectively.
\end{definition}
It is obvious  that GACQ implies GGCQ  which is considered as the weakest constraint qualification. In the case of a standard nonlinear program,  GACQ and GGCQ reduce to the standard definitions of Abadie and Guignard constraint qualification {respectively}. Under  GGCQ, any local optimal solution to
a disjunctive problem, i.e., an optimization problem where the constraint has the form (\ref{EqGenOptProbl}) with the set $D$ equal to a union of finitely many polyhedral convex sets,  must be M-stationary (see e.g. \cite[Theorem 7]{FleKanOut07}).

GACQ and GGCQ are weak constraint qualifications, but they are usually difficult to verify. Hence we are interested in constraint qualifications that  are effectively verifiable, and yet not too strong. The following notion of metric subregularity is the base of the constraint qualification which plays a central role in this paper.
\begin{definition}
  	 Let $M:\R^d\rightrightarrows\R^s$ be a set-valued mapping and let $(\zb,\wb)\in\Gr M$. We say that $M$ is {\em metrically subregular} at $(\zb,\wb)$ if there exist a neighborhood $W$ of $\zb$ and a positive number  $\kappa>0$ such that
  \begin{equation}\label{EqMetrSubReg}\dist{z,M^{-1}(\wb)}\leq\kappa\dist{\wb,M(z)}\ \; \forall z\in W.
  \end{equation}
\end{definition}
The metric subregularity property was introduced in \cite{Ioffe79} for single-valued maps under the terminology ``regularity at a point'' and the name of ``metric subregularity''  was suggested in \cite{DoRo04}.
Note that the metrical subregularity at $(\zb,0)\in\Gr M$ is also referred to the existence of a local error bound at $\zb$.
It is easy to see that $M$ is metrically subregular at  $(\zb,\wb)$ if and only if its inverse set-valued map $M^{-1}$ is  calm at $(\wb,\zb)\in \Gr M^{-1}$, i.e.,   there exist a neighborhood $W$ of $\zb$,  a neighborhood $V$ of $\wb$ and a positive number  $\kappa>0$ such that
  $$M^{-1}(w)\cap V\subset M^{-1}(\wb) +\kappa \|w-\wb\| \B \quad \forall z\in W. $$
  While the term for the calmness of a set-valued map was first coined in \cite{RoWe98}, it was introduced as the pseudo-upper Lipschitz continuity  in \cite{YeYe} taking into { account} that it is weaker than both the pseudo Lipschitz continuity of Aubin \cite{Aubin} and the upper Lipschitz continuity of Robinson \cite{Robinson75,Robinson76} .

More general constraints can be easily written in the form $P(z)\in D$.
For instance,
a set
$\Omega=\{z\mv
P_1(z)\in D_1, 0\in P_2(z)+Q(z)\}$
where $P_i:\R^{d}\to\R^{s_i}$, $i=1,2$ and $Q:\R^{d}\rightrightarrows\R^{s_2}$ is a set-valued map can also be written as
\[\Omega=\{z\mv P(z)\in D\}\;\mbox{ with }\; P(z):=\left(\begin{array}{c}P_1(z)\\(z,-P_2(z))\end{array}\right),\ D:=D_1\times
\Gr Q.\]
 We now show that for both representations of $\Omega$ the properties of metric subregularity
  for the {maps} describing the constraints are equivalent.
\begin{proposition}\label{PropEquGrSubReg}Let $P_i:\R^{d}\to\R^{s_i}$, $i=1,2$,  {$D_1\subset \R^{s_1}$} be closed and  $Q:\R^{d}\rightrightarrows\R^{s_2}$ be a set-valued map with a closed graph. Further assume that $P_1$ and $P_2$ are Lipschitz near $\zb$. Then the set-valued map
\[M_1(z):=\left(\begin{array}
  {c}P_1(z)-D_1\\
  P_2(z)+Q(z)
\end{array}\right)\]
is metrically subregular at $(\zb,(0,0))$ if and only if the set-valued map
\[M_2(z):=\left(\begin{array}
  {c}P_1(z)\\(z,-P_2(z))
\end{array}\right)-D_1\times\Gr Q\]
is metrically subregular at $(\zb,(0,0,0))$.
\end{proposition}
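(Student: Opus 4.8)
The plan is to exploit the fact that both maps share the same zero preimage and then to compare the two residual functions $r_1(z):=\dist{0,M_1(z)}$ and $r_2(z):=\dist{0,M_2(z)}$ up to a multiplicative constant near $\zb$. First I would record that
\[
M_1^{-1}(0)=M_2^{-1}(0)=\Omega:=\{z\mv P_1(z)\in D_1,\ 0\in P_2(z)+Q(z)\},
\]
since $(z,-P_2(z))\in\Gr Q$ is just a rewriting of $-P_2(z)\in Q(z)$. Hence the left-hand side $\dist{z,M_i^{-1}(0)}=\dist{z,\Omega}$ in the subregularity estimate (\ref{EqMetrSubReg}) is \emph{the same} for $i=1,2$, and the whole claim reduces to comparing the residuals. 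Using that the Euclidean distance to a product set splits, I would write
\[
r_1(z)^2=\dist{P_1(z),D_1}^2+\dist{-P_2(z),Q(z)}^2,\qquad r_2(z)^2=\dist{P_1(z),D_1}^2+\dist{(z,-P_2(z)),\Gr Q}^2 .
\]

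The easy implication is that metric subregularity of $M_2$ forces that of $M_1$: taking the base point $a=z$ in the infimum defining $\dist{(z,-P_2(z)),\Gr Q}$ shows $r_2(z)\le r_1(z)$, whence $\dist{z,\Omega}\le\kappa\,r_2(z)\le\kappa\,r_1(z)$ on the same neighborhood and with the same constant; no Lipschitz hypothesis is needed here.

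The real content, and the step I expect to be the main obstacle, is the reverse implication, because the pointwise bound $r_1\le C\,r_2$ \emph{fails} in general (the graph of $Q$ may carry points near $z$ even when $Q(z)=\emptyset$, so $r_1(z)$ can be $+\infty$ while $r_2(z)$ is finite). To get around this I would re-center. Given $z$ near $\zb$, pick $d_1\in D_1$ and $(a,b)\in\Gr Q$ that are $\epsilon$-optimal for $r_2(z)$; then $\norm{z-a}\le r_2(z)+\epsilon$ while $\dist{P_1(z),D_1}$ and $\norm{P_2(z)+b}$ are each at most $r_2(z)+\epsilon$. Since $b\in Q(a)$, the point $a$ is an admissible base point for $r_1$, and the Lipschitz continuity of $P_1,P_2$ near $\zb$ transfers smallness of the residual from $z$ to $a$: with $L$ a common Lipschitz modulus, $\dist{P_1(a),D_1}\le\dist{P_1(z),D_1}+L\norm{z-a}$ and $\dist{-P_2(a),Q(a)}\le\norm{P_2(a)+b}\le\norm{P_2(z)+b}+L\norm{z-a}$, so that $r_1(a)\le C\,(r_2(z)+\epsilon)$ for a constant $C$ built from $L$. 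Applying the assumed subregularity of $M_1$ at the nearby point $a$ together with the triangle inequality then yields
\[
\dist{z,\Omega}\le\norm{z-a}+\dist{a,\Omega}\le\norm{z-a}+\kappa_1 r_1(a)\le(1+\kappa_1 C)\,(r_2(z)+\epsilon),
\]
and letting $\epsilon\downarrow0$ gives the required estimate for $M_2$.

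Finally I would tie up the neighborhood bookkeeping, which is where one must be slightly careful. Because $(\zb,-P_2(\zb))\in\Gr Q$, evaluating $r_2$ at this fixed base point and projecting $P_1(z)$ onto $D_1$ shows $r_2(z)=O(\norm{z-\zb})$; hence for $z$ sufficiently close to $\zb$ the re-centered point $a$, being within $r_2(z)+\epsilon$ of $z$, still lies inside the neighborhood on which $M_1$ is metrically subregular. This is exactly what legitimizes the application of subregularity at $a$ above and completes the argument.
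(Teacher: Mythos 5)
Your proof is correct and takes essentially the same route as the paper's: the easy direction by restricting the infimum over $\Gr Q$ to the base point $z$ itself, and the hard direction by re-centering at a (near-)closest point $(\tilde z,\tilde y)\in\Gr Q$ to $(z,-P_2(z))$, transferring the residual bound to $\tilde z$ via the Lipschitz moduli of $P_1,P_2$, applying the subregularity estimate of $M_1$ at $\tilde z$ (legitimized by the same $O(\norm{z-\zb})$ neighborhood bookkeeping the paper performs), and concluding with the triangle inequality $\dist{z,\Omega}\le\norm{z-\tilde z}+\dist{\tilde z,\Omega}$. The only cosmetic differences are your use of $\epsilon$-optimal points where the paper uses exact nearest points (available since $\Gr Q$ is closed) and your Euclidean product norm in place of the paper's sum norm.
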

\begin{proof}
  Assume without loss of generality that the image space $\R^{s_1}\times\R^{s_2}$ of $M_1$ is equipped with the norm $\norm{(y_1,y_2)}=\norm{y_1}+\norm{y_2}$, whereas we use the norm $\norm{(y_1,z,y_2)}=\norm{y_1}+\norm{z}+\norm{y_2}$ for the image space $\R^{s_1}\times\R^d\times\R^{s_2}$ of $M_2$. If $M_2$ is metrically subregular at $(\zb,(0,0,0))$, then there are a neighborhood  $W$ of $\zb$ and a constant $\kappa$ such that for all $z\in W$ we have
  \begin{eqnarray*}\dist{z,\Omega}&\leq& \kappa \distb{(0,0,0), M_2(z)}\\
  &=&\kappa\big(\dist{P_1(z),D_1}+\inf\{\norm{z-\tilde z}+\norm{-P_2(z)-\tilde y}\mv (\tilde z,\tilde y)\in\Gr Q\}\big)\\
  &\leq &  \kappa\big(\dist{P_1(z),D_1}+\inf\{\norm{-P_2(z)-\tilde y}\mv \tilde y\in Q(z)\}\big)=\kappa \distb{(0,0),M_1(z)} ,
  \end{eqnarray*}
  which shows metric subregularity of $M_1$. Now assume that $M_1$ is metrically subregular at $(\zb,(0,0))$ and hence we can find a radius $r>0$  and a real $\kappa$ such that
  \[\dist{z,\Omega}\leq\kappa \distb{(0,0),M_1(z)}\ \forall z\in \B(\zb;r).\]
  Further assume that {$P_1, P_2$ are} Lipschitz with modulus $L$ on $\B(\zb;r)$,  and consider $z\in \B(\zb;r/(2+L))$. Since $\Gr Q$ is closed, there are $(\tilde z,\tilde y)\in \Gr Q$ with
  \[\norm{z-\tilde z}+\norm{-P_2(z)-\tilde y}={\distb{(z,-P_2(z)),\Gr Q}}.\]
  Then
  \[\norm{z-\tilde z}\leq \distb{(z,-P_2(z)),\Gr Q}\leq \norm{z-\zb}+\norm{-P_2(z)+P_2(\zb)}\leq (1+L)\norm{z-\zb}\]
  implying $\norm{\zb-\tilde z}\leq \norm{\zb-z}+\norm{z-\tilde z}\leq (2+L)\norm{z-\zb}\leq r$ and
  \begin{eqnarray*}\dist{\tilde z,\Omega}&\leq& \kappa \distb{(0,0),M_1(\tilde z)}
  =\kappa\Big(\dist{P_1(\tilde z),D_1}+\distb{-P_2(\tilde z),Q(\tilde z)}\Big)\\
  &\leq& \kappa\big(\dist{P_1(\tilde z),D_1}+\norm{-P_2(\tilde z)-\tilde y}\big)\\
  &\leq &\kappa\big(2L\norm{z-\tilde z}+\dist{P_1(z),D_1}+\norm{-P_2(z)-\tilde y}\big).
  \end{eqnarray*}
  Taking into account $\dist{z,\Omega}\leq\dist{\tilde z,\Omega}+\norm{z-\tilde z}$ we arrive at
  \begin{eqnarray*}\dist{z,\Omega}&\leq&\kappa\max\{2L+\frac 1\kappa,1\}\big(\dist{P_1(z),D_1}+\norm{z-\tilde z}+\norm{-P_2(z)-\tilde y}\big)\\
  &=&\kappa\max\{2L+\frac 1\kappa,1\}\distb{(0,0,0),M_2(z)},
  \end{eqnarray*}
  establishing metric subregularity of $M_2$ at {$(\zb,(0,0,0))$}.
\end{proof}

Since the metric subregularity of the set-valued map $M(z):=P(z)-D$ at $(\zb,0)$ implies GACQ holding at $\bar z$, see e.g., \cite[Proposition 1]{HenOut05}, it can serve as a constraint qualification.
{Following \cite[Definition 3.2]{GfrMo15a}, we define it as a constraint qualification below.}
\begin{definition}[\bf metric subregularity constraint qualification]\label{DefMetrSubregCQ} {Let  $P(\zb)\in D$.
We say that the {\sc metric subregularity constraint qualification (MSCQ)} holds at $\zb$ for the system $P(z)\in D$} if the set-valued map $M(z):=P(z)-D$ is metrically subregular at $(\zb,0)$, or equivalently the perturbed set-valued map $M^{-1}(w):=\{z| w\in P(z)-D\}$ is calm at $(0, \zb)$.
\end{definition}
There exist several sufficient conditions for MSCQ in the literature. Here are the two most frequently used ones. The first case is  when the linear CQ holds, i.e., when $P$ is affine and $D$ is a union of finitely many polyhedral convex sets.
The second case is when the no nonzero abnormal multiplier constraint qualification (NNAMCQ) holds at $\zb$ (see e.g., \cite{Ye}):
\begin{equation}\label{NNAMCQ}
\nabla P(\zb)^T\lambda=0,\;\lambda\in N_D(P(\zb))\quad\Longrightarrow\quad\lambda=0.\end{equation}
 It is known that NNAMCQ is equivalent to MFCQ in the case of standard nonlinear programming. Condition (\ref{NNAMCQ}) appears under different terminologies in the literature; e.g., while it is called NNAMCQ in \cite{Ye}, it is referred to generalized MFCQ (GMFCQ) in \cite{FleKanOut07}.

The linear CQ and  NNAMCQ may be still too strong for some problems to hold.  Recently  some new constraint qualifications for standard nonlinear programs have been introduced in the literature that are stronger than MSCQ and weaker than the linear CQ and/or NNAMCQ; see e.g. \cite{AndHaeSchSilMP,AndHaeSchSilSIAM}. These CQs include the relaxed constant positive linear dependence condition (RCPLD) (see \cite[Theorem 4.2]{GuoZhangLin}), the constant  rank of the subspace component condition (CRSC) (see \cite[Corollary  4.1]{GuoZhangLin}) and the quasinormality \cite[Theorem 5.2]{guoyezhang-infinite}.

In this paper we will use  the following sufficient conditions.
\begin{theorem}\label{ThSuffCondMS}Let  $\zb \in \Omega$ where $\Omega $ is defined as in (\ref{EqGenOptProbl}).
MSCQ holds at $\zb$ if one of the following conditions is fulfilled:
\begin{itemize}
  \item First-order sufficient condition for metric subregularity (FOSCMS) for the system $P(z)\in D$ with $P$ smooth, cf. \cite[Corollary 1]{GfrKl16} : for every $0\not=w\in T_\Omega^{\rm lin}(\zb)$ one has
      \[\nabla P(\zb)^T\lambda=0,\;\lambda\in N_D(P(\zb);\nabla P(\zb)w)\quad\Longrightarrow\quad\lambda=0.\]
  \item Second-order sufficient condition for metric subregularity (SOSCMS) for the inequality  system $P(z)\in \R^s_-$ with $P$ twice
  Fr\'echet differentiable at $\zb$, cf. \cite[Theorem 6.1]{Gfr11}: For every $0\not=w\in T_\Omega^{\rm lin}(\zb)$ one has
      \[\nabla P(\zb)^T\lambda=0,\;\lambda\in N_{\R^s_-}(P(\zb)),\; w^T\nabla^2(\lambda^TP)(\zb)w\geq 0\quad\Longrightarrow\quad\lambda=0.\]
\end{itemize}
\end{theorem}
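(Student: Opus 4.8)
The plan is to exploit the fact that, by Definition \ref{DefMetrSubregCQ}, MSCQ for the system $P(z)\in D$ at $\zb$ is \emph{by definition} the metric subregularity of $M(z):=P(z)-D$ at $(\zb,0)$. Hence it suffices to prove, under each of the two conditions, that there exist a neighbourhood $W$ of $\zb$ and $\kappa>0$ with $\dist{z,\Omega}\le\kappa\,\dist{P(z),D}$ for all $z\in W$; these are exactly the assertions of \cite[Corollary 1]{GfrKl16} and \cite[Theorem 6.1]{Gfr11}. I would nevertheless reconstruct the FOSCMS argument to show how the directional machinery of Section 2 enters, since that is where the content lies.

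For FOSCMS I would argue by contradiction. Suppose $M$ is not metrically subregular at $(\zb,0)$; then there is a sequence $z_k\to\zb$, $z_k\notin\Omega$, with $\dist{z_k,\Omega}>k\,\dist{P(z_k),D}$. Put $\tau_k:=\norm{z_k-\zb}\downarrow0$ and $w_k:=(z_k-\zb)/\tau_k$; after passing to a subsequence $w_k\to w$ with $\norm w=1$. Since $\zb\in\Omega$ we have $\dist{z_k,\Omega}\le\tau_k$, so $\dist{P(z_k),D}<\tau_k/k=o(\tau_k)$. Combining this with the first-order expansion $P(z_k)=P(\zb)+\tau_k\nabla P(\zb)w_k+o(\tau_k)$ and the smoothness of $P$ yields $\tau_k^{-1}\dist{P(\zb)+\tau_k\nabla P(\zb)w,D}\to0$, i.e. $\nabla P(\zb)w\in T_D(P(\zb))$, so that by (\ref{lcone}) $0\ne w\in T_\Omega^{\rm lin}(\zb)$.

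Next I would manufacture the multiplier. Let $d_k$ be a projection of $P(z_k)$ onto the closed set $D$, so $\rho_k:=\norm{P(z_k)-d_k}=\dist{P(z_k),D}>0$, and set $\lambda_k:=(P(z_k)-d_k)/\rho_k\in\widehat N_D(d_k)$ with $\norm{\lambda_k}=1$; along a subsequence $\lambda_k\to\lambda$, $\norm\lambda=1$. Because $\rho_k=o(\tau_k)$ one gets $d_k=P(\zb)+\tau_k(\nabla P(\zb)w+o(1))$, i.e. $d_k\to P(\zb)$ in the direction $\nabla P(\zb)w$; since $\lambda_k\in\widehat N_D(d_k)$, the very definition of the directional limiting normal cone gives $\lambda\in N_D(P(\zb);\nabla P(\zb)w)$. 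The main obstacle is the remaining claim $\nabla P(\zb)^T\lambda=0$: this is where the \emph{quantitative} failure of the error bound must be used, not merely $\dist{P(z_k),D}\to0$. I would obtain it from the slope characterisation of error bounds, showing that the violating sequence can be chosen so that the strong slope of $z\mapsto\dist{P(z),D}$ at $z_k$ — which is governed by $\norm{\nabla P(z_k)^T\lambda_k}$ — tends to $0$; passing to the limit yields $\nabla P(\zb)^T\lambda=0$. Together with $\lambda\in N_D(P(\zb);\nabla P(\zb)w)$ and $\lambda\ne0$ this contradicts FOSCMS, proving metric subregularity of $M$ and hence MSCQ. Controlling this slope while simultaneously steering $z_k$ along a fixed tangent direction $w$ is the delicate coupling; in the cited work it is handled via a careful selection of the sequence, e.g.\ through Ekeland's variational principle.

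For SOSCMS, where $D=\R^s_-$, the same contradiction scheme applies, but now the directional cone $N_{\R^s_-}(P(\zb);\nabla P(\zb)w)$ may contain nonzero elements with $\nabla P(\zb)^T\lambda=0$, so the first-order implication alone need not close the argument. Here I would keep a critical direction $w$ together with a multiplier $\lambda\in N_{\R^s_-}(P(\zb))$ satisfying $\nabla P(\zb)^T\lambda=0$, and then push the expansion of $\lambda^TP$ along $z_k$ to second order. The quantitative violation of the error bound, combined with componentwise nonnegativity of $\lambda$ on the active set and the twice-differentiability of $P$ at $\zb$, forces $w^T\nabla^2(\lambda^TP)(\zb)w\ge0$, contradicting SOSCMS. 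This reproduces \cite[Theorem 6.1]{Gfr11}; the extra second-order term is the only new ingredient relative to the first-order case.
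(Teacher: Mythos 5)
Your proposal is correct and takes essentially the same approach as the paper: the paper offers no proof of this theorem at all, stating it as a direct consequence of the cited results (Corollary 1 of \cite{GfrKl16} for FOSCMS and Theorem 6.1 of \cite{Gfr11} for SOSCMS), which is precisely what your first paragraph does. Your supplementary reconstruction correctly outlines the contradiction/directional-normal machinery behind those references and honestly defers the one genuinely delicate step (forcing $\nabla P(\zb)^T\lambda=0$ via an Ekeland-type selection of the violating sequence) to the cited works, so nothing essential is missing relative to what the paper itself provides.
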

In the case $T_\Omega^{\rm lin}(\zb)=\{0\}$, FOSCMS satisfies automatically. By the definition of the linearized cone (\ref{lcone}), $T_\Omega^{\rm lin}(\zb)=\{0\}$ means that
 \[\nabla P(\zb)w=\xi, \quad \xi \in T_D(P(\zb))\;\Longrightarrow\;w=0.\]
 By the graphical derivative criterion for strong metric subregularity \cite[Theorem 4E.1]{DoRo14}, this is equivalent to saying that the set-valued map $M(z)=P(z)-D$ is strongly metrically subregular (or equivalently its inverse is isolated calm) at $(\bar z,0)$.
When the set $D$ is convex, by the relationship between the limiting normal cone and its directional version in Proposition \ref{relationship},
$$N_D(P(\zb);\nabla P(\zb)w)=N_D(P(\zb) )\cap \{\nabla P(\zb)w\}^\perp.$$
Consequently in the case where  $ T_\Omega^{\rm lin}(\zb ) \not =\{0\}$ and $D$ is convex, FOSCMS reduces to NNAMCQ. Indeed, suppose that $\nabla P(\zb)^T\lambda=0$ and $\lambda \in N_D(P(\zb) )$.  Then $\lambda^T (\nabla P(\zb)w)=0$. Hence $\lambda \in N_D(P(\zb);\nabla P(\zb)w)$ which implies from FOSCMS that $\lambda=0$.
Hence for convex $D$,
FOSCMS is equivalent to saying that either the strong metric subregularity or  the NNAMCQ (\ref{NNAMCQ}) holds at $(\bar z,0)$. In the case of an inequality system $P(z)\leq 0$ and $ T_\Omega^{\rm lin}(\zb ) \not =\{0\}$,
SOSCMS is obviously weaker than NNAMCQ.

In many situations, the constraint system $ P(z)\in D$ can be {splitted} into two parts such that one part can be easily verified to satisfy MSCQ. For example
\begin{equation}\label{EqConstrSplit}P(z)=(P_1(z),P_2(z))\in  D=D_1\times D_2\end{equation}
where $P_i:\R^d\to\R^{s_i}$ are smooth and $D_i\subset \R^{s_i}$, $i=1,2$ are closed, and for one part, let say $P_2(z)\in D_2$, it is known in advance that the map $P_2(\cdot)-D_2$ is metrically subregular at $(\zb,0)$. In this case the following theorem is useful.
\begin{theorem}
  \label{ThSuffCondMSSplit}{Let  $P(\zb)\in D$ with  $P$ smooth and $D$ closed
  and assume that $P$ and $D$} can be written in the form \eqref{EqConstrSplit} such that the set-valued map $P_2(z)-D_2$ is metrically subregular at $(\zb,0)$. Further assume for every $0\not=w\in T_\Omega^{\rm lin}(\zb)$ one has
     \[\nabla P_1(\zb)^T\lambda^1+\nabla P_2(\zb)^T\lambda^2=0,\;\lambda^i\in N_{D_i}(P_i(\zb);\nabla P_i(\zb)w)\;i=1,2\;\Longrightarrow\;\lambda^1=0.\]
      Then MSCQ holds at $\zb$ {for the system $P(z)\in D$}.
\end{theorem}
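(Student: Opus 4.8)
The plan is to reduce the claim to the first-order sufficient condition FOSCMS of Theorem~\ref{ThSuffCondMS}, applied to a reformulated constraint system in which the $P_2$--block is absorbed into the constraint set, and then to transport the resulting metric subregularity back to $P(z)\in D$ using the assumed subregularity of $P_2(\cdot)-D_2$. Concretely, set $\Omega_2:=\{z\mv P_2(z)\in D_2\}$, which is closed because $P_2$ is continuous and $D_2$ is closed. Since $\Omega=\Omega_2\cap\{z\mv P_1(z)\in D_1\}$, we may rewrite
\[\Omega=\{z\mv \Phi(z)\in\tilde D\},\qquad \Phi(z):=(P_1(z),z),\quad \tilde D:=D_1\times\Omega_2,\]
with $\Phi$ smooth and $\tilde D$ closed. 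First I would establish FOSCMS for the system $\Phi(z)\in\tilde D$, then invoke Theorem~\ref{ThSuffCondMS} to obtain metric subregularity of $\Phi(\cdot)-\tilde D$ at $(\zb,0)$, and finally convert this into MSCQ for the original system.

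The decisive ingredient is the directional normal-cone estimate furnished by metric subregularity. Because $P_2(\cdot)-D_2$ is metrically subregular at $(\zb,0)$, the implications of metric subregularity recorded in \cite{GfrMo16,GfrOut15} give, for every direction $w$,
\[N_{\Omega_2}(\zb;w)\subseteq \nabla P_2(\zb)^T N_{D_2}(P_2(\zb);\nabla P_2(\zb)w);\]
the nondirectional case $w=0$ is the familiar inclusion $N_{\Omega_2}(\zb)\subseteq\nabla P_2(\zb)^T N_{D_2}(P_2(\zb))$. This is the step that converts the qualitative hypothesis on the $P_2$--block into a usable quantitative bound, and obtaining its directional form (not merely the $w=0$ inclusion) is precisely what allows the weaker conclusion $\lambda^1=0$ in the hypothesis to suffice.

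To verify FOSCMS for $\Phi(z)\in\tilde D$ one fixes $0\neq w$ in its linearized cone, i.e. $\nabla\Phi(\zb)w=(\nabla P_1(\zb)w,w)\in T_{\tilde D}(\Phi(\zb))$. By the rule of tangents to product sets (Proposition~\ref{productset}) this yields $\nabla P_1(\zb)w\in T_{D_1}(P_1(\zb))$ and $w\in T_{\Omega_2}(\zb)$; since $T_{\Omega_2}(\zb)\subseteq T_{\Omega_2}^{\rm lin}(\zb)$ always holds, the latter gives $\nabla P_2(\zb)w\in T_{D_2}(P_2(\zb))$, so that $w\in T_\Omega^{\rm lin}(\zb)$ in the original sense and the hypothesis of the theorem applies to $w$. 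Now take $\mu\in N_{\tilde D}(\Phi(\zb);\nabla\Phi(\zb)w)$ with $\nabla\Phi(\zb)^T\mu=0$. Because the regular normal cone to a Cartesian product splits as a product, passing to directional limits gives $N_{\tilde D}(\Phi(\zb);\nabla\Phi(\zb)w)\subseteq N_{D_1}(P_1(\zb);\nabla P_1(\zb)w)\times N_{\Omega_2}(\zb;w)$, hence $\mu=(\lambda^1,\eta)$ with $\lambda^1\in N_{D_1}(P_1(\zb);\nabla P_1(\zb)w)$ and $\eta\in N_{\Omega_2}(\zb;w)$, while the stationarity equation reads $\eta=-\nabla P_1(\zb)^T\lambda^1$. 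Applying the estimate of the previous paragraph to $\eta$ produces $\lambda^2\in N_{D_2}(P_2(\zb);\nabla P_2(\zb)w)$ with $\nabla P_2(\zb)^T\lambda^2=\eta=-\nabla P_1(\zb)^T\lambda^1$, i.e. $\nabla P_1(\zb)^T\lambda^1+\nabla P_2(\zb)^T\lambda^2=0$. The hypothesis of the theorem then forces $\lambda^1=0$, whence $\eta=0$ and $\mu=0$. Thus FOSCMS holds for $\Phi(z)\in\tilde D$, and Theorem~\ref{ThSuffCondMS} delivers metric subregularity of $\Phi(\cdot)-\tilde D$ at $(\zb,0)$.

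It then remains to transfer subregularity back to $P(z)\in D$, where we may assume the sum norm throughout. Subregularity of $\Phi(\cdot)-\tilde D$ gives $\tilde\kappa>0$ with $\dist{z,\Omega}\le\tilde\kappa\,(\dist{P_1(z),D_1}+\dist{z,\Omega_2})$ near $\zb$, while subregularity of $P_2(\cdot)-D_2$ gives $\kappa_2>0$ with $\dist{z,\Omega_2}\le\kappa_2\,\dist{P_2(z),D_2}$ near $\zb$. Combining the two and using $\dist{P(z),D}=\dist{P_1(z),D_1}+\dist{P_2(z),D_2}$ yields $\dist{z,\Omega}\le\kappa\,\dist{P(z),D}$ on a neighborhood of $\zb$ for a suitable $\kappa$, which is exactly MSCQ for $P(z)\in D$. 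I expect the main obstacle to be the directional normal-cone estimate of the second paragraph: the product-rule bookkeeping and the error-bound combination are routine, but that inclusion is where the assumed subregularity of the $P_2$--block is genuinely used, and its directional form is what makes the first-order test close with the asymmetric conclusion $\lambda^1=0$.
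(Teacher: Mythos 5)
Your proposal is sound in substance, but it follows a genuinely different route from the paper. The paper never reformulates the system: it fixes a direction $0\neq w\in T_\Omega^{\rm lin}(\zb)$ and works with the limit set critical for directional metric regularity ${\rm Cr}_{\R^{s_1}}M((\zb,0);w)$ of \cite[Definition 2]{Gfr13a} for the map $M(z)=P(z)-D$ itself; the hypothesis of the theorem is used to show $(0,0)\notin {\rm Cr}_{\R^{s_1}}M((\zb,0);w)$, after which \cite[Lemmas 2, 3, Theorem 6]{Gfr13a} (this is where the Aubin property of $P_1(\cdot)-D_1$ and the assumed subregularity of $P_2(\cdot)-D_2$ actually enter) yield metric subregularity of $M$ in direction $w$, and \cite[Lemma 2.7]{Gfr14b} aggregates over all directions. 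You instead absorb the subregular block into the constraint set via $\tilde D=D_1\times\Omega_2$, verify FOSCMS for the reformulated system --- with the subregularity of the $P_2$-block entering through the directional estimate $N_{\Omega_2}(\zb;w)\subseteq\nabla P_2(\zb)^TN_{D_2}(P_2(\zb);\nabla P_2(\zb)w)$ --- invoke Theorem \ref{ThSuffCondMS}, and then concatenate the two error bounds. Your route is more modular: besides Theorem \ref{ThSuffCondMS} it needs only the directional form of \cite[Proposition 4.1]{GfrOut15}, whose direction-zero case the paper already uses in Theorem \ref{ThBMP}; and that proposition is indeed applicable here, since plain metric subregularity implies directional metric subregularity in every direction. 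The paper's proof stays with the original data throughout, at the price of leaning on the heavier critical-limit-set machinery of \cite{Gfr13a}.

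One step of your write-up is wrong as stated, although the conclusion you need is true. You pass from $(\nabla P_1(\zb)w,w)\in T_{D_1\times\Omega_2}\big((P_1(\zb),\zb)\big)$ to the componentwise conditions $\nabla P_1(\zb)w\in T_{D_1}(P_1(\zb))$ and $\nabla P_2(\zb)w\in T_{D_2}(P_2(\zb))$, and then assert that therefore $w\in T_\Omega^{\rm lin}(\zb)$ in the original sense. That last assertion needs the inclusion $T_{D_1}(P_1(\zb))\times T_{D_2}(P_2(\zb))\subseteq T_{D_1\times D_2}(P(\zb))$, which is precisely the direction that fails for general closed sets: Proposition \ref{productset} gives only $T_{D_1\times D_2}\subseteq T_{D_1}\times T_{D_2}$, with equality under geometric derivability, and here $D_1,D_2$ are arbitrary closed sets, while the theorem's hypothesis quantifies only over the (possibly smaller) joint linearized cone $\{w\mv\nabla P(\zb)w\in T_{D_1\times D_2}(P(\zb))\}$. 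The repair is immediate but must bypass the componentwise splitting: take $t_k\downarrow 0$ and $(a_k,u_k)\to(\nabla P_1(\zb)w,w)$ with $P_1(\zb)+t_ka_k\in D_1$ and $\zb+t_ku_k\in\Omega_2$; setting $b_k:=\big(P_2(\zb+t_ku_k)-P_2(\zb)\big)/t_k\to\nabla P_2(\zb)w$ gives $P(\zb)+t_k(a_k,b_k)\in D_1\times D_2$ with the same $t_k$, hence $\nabla P(\zb)w\in T_{D_1\times D_2}(P(\zb))$, i.e. $T_{\tilde\Omega}^{\rm lin}(\zb)\subseteq T_\Omega^{\rm lin}(\zb)$. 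With that sentence inserted, your argument goes through.
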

\begin{proof}
  Let the set-valued maps $M$, $M_i( i=1,2)$ be given by $M(z):=P(z)-D$ and $M_i(z)=P_i(z)-D_i( i=1,2)$ respectively. Since $P_1$ is assumed to be smooth, it is also Lipschitz near $\zb$ and thus $M_1$ has the Aubin property around $(\zb,0)$. Consider any direction $0\not=w\in T_\Omega^{\rm lin}(\zb)$. By \cite[Definition 2(3.)]{Gfr13a} the limit set critical for directional metric regularity ${\rm Cr}_{\R^{s_1}}M((\zb,0); w)$  {with respect to $w$ and $\R^{s_1}$ at $(\zb, 0)$} is defined as the collection of all elements $(v,z^\ast)\in\R^s\times\R^d$ such that there are sequences $t_k\searrow 0$, $(w_k,v_k,z_k^\ast)\to (w,v,z^\ast)$, $\lambda_k\in\Sp_{\R^s}$ and a real $\beta>0$ such that $(z_k^\ast,\lambda_k)\in\widehat N_{\Gr M}(\zb+t_kw_k, t_kv_k)$ and $\norm{\lambda_k^1}\geq\beta$ hold for all $k$, where $\lambda_k=(\lambda_k^1,\lambda_k^2)\in\R^{s_1}\times\R^{s_2}$. We claim that $(0,0)\not\in  {\rm Cr}_{\R^{s_1}}M((\zb,0); w)$. Assume on the contrary that $(0,0)\in  {\rm Cr}_{\R^{s_1}}M((\zb,0); w)$ and consider the corresponding sequences $(t_k,w_k,v_k,z_k^\ast,\lambda_k)$. The sequence $\lambda_k$ is bounded and by passing to a subsequence we can assume that $\lambda_k$ converges to some $\lambda=(\lambda^1,\lambda^2)$ satisfying $\norm{\lambda^1}\geq \beta>0$. Since $(z_k^\ast,\lambda_k)\in \widehat N_{\Gr M}(\zb+t_kw_k, t_kv_k)$ it follows from \cite[Exercise 6.7]{RoWe98} that
  $-\lambda_k\in\widehat N_D(P(\zb+t_kw_k)-t_kv_k)$ and $z_k^\ast=-\nabla P(\zb+t_kw_k)^T\lambda_k$ implying $-\lambda\in N_D(P(\zb);\nabla P(\zb)w)$ and $\nabla P(\zb)^T(-\lambda)=\nabla P_1(\zb)^T(-\lambda^1)+\nabla P_2(\zb)^T(-\lambda^2)=0$. From \cite[Lemma 1]{GfrKl16} we also conclude $-\lambda^i\in N_{D_i}(P_i(\zb);\nabla P_i(\zb)w)$ resulting in a contradiction to the assumption of the theorem. Hence our claim $(0,0)\not\in  {\rm Cr}_{\R^{s_1}}M((\zb,0); w)$ holds true and by \cite[Lemmas 2, 3, Theorem 6]{Gfr13a} it follows that $M$ is metrically subregular in direction $w$ at $(\zb,0)$, where directional metric subregularity is defined in \cite[Definition 1]{Gfr13a}. Since by {definition} $M$ is metrically subregular in every direction $w\not  \in T_\Omega^{\rm lin}(\zb)$, we conclude from \cite[Lemma 2.7]{Gfr14b} that $M$ is metrically subregular at $(\zb,0)$.
\end{proof}

We now discuss  some consequences of MSCQ. First we have the following change of coordinate formula for normal cones.
\begin{proposition}\label{PropInclNormalcone}
  Let $\zb\in \Omega:=\{z| P(z)\in D\}$ with $P$ smooth and $D$ closed.
  Then
  \begin{equation}\label{EqInclRegNormalCone}
  \widehat N_\Omega(\zb)\supset \nabla P(\zb)^T\widehat N_D(P(\zb)).
  \end{equation}
Further, if MSCQ holds at $\zb$ for the system $P(z)\in D$, then
  \begin{equation}\label{EqInclLimNormalCone}
  \widehat N_\Omega(\zb)\subset  N_\Omega(\zb)\subset \nabla P(\zb)^T N_D(P(\zb)).
  \end{equation}
   In particular if MSCQ holds at $\zb$ for the system $P(z)\in D$ with convex $D$, then
  \begin{equation} \label{EqInclNormalCone} \widehat N_\Omega(\zb)= N_\Omega(\zb)=\nabla P(\zb)^T N_D(P(\zb)).\end{equation}
\end{proposition}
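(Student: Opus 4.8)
The plan is to establish the three claims in order, the middle one carrying all the weight. For \eqref{EqInclRegNormalCone} I would work with tangent cones and polarity. One always has $T_\Omega(\zb)\subset T_\Omega^{\rm lin}(\zb)$: if $w\in T_\Omega(\zb)$ is witnessed by $t_k\downarrow0$ and $w_k\to w$ with $\zb+t_kw_k\in\Omega$, then $P(\zb+t_kw_k)\in D$ and $t_k^{-1}\big(P(\zb+t_kw_k)-P(\zb)\big)\to\nabla P(\zb)w$, so $\nabla P(\zb)w\in T_D(P(\zb))$. Taking polars and recalling $\widehat N_\Omega(\zb)=(T_\Omega(\zb))^\circ$ gives $\widehat N_\Omega(\zb)\supset(T_\Omega^{\rm lin}(\zb))^\circ$, and the remaining inclusion $\nabla P(\zb)^T\widehat N_D(P(\zb))\subset(T_\Omega^{\rm lin}(\zb))^\circ$ is immediate: for $v^\ast\in\widehat N_D(P(\zb))=(T_D(P(\zb)))^\circ$ and any $w\in T_\Omega^{\rm lin}(\zb)$ one has $\nabla P(\zb)w\in T_D(P(\zb))$, hence $\skalp{\nabla P(\zb)^Tv^\ast,w}=\skalp{v^\ast,\nabla P(\zb)w}\le0$.

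The core is \eqref{EqInclLimNormalCone}. As $\widehat N_\Omega(\zb)\subset N_\Omega(\zb)$ holds by definition, only $N_\Omega(\zb)\subset\nabla P(\zb)^TN_D(P(\zb))$ requires MSCQ, which I would invoke through the error bound $\dist{z,\Omega}\le\kappa\dist{P(z),D}$ valid on a neighborhood $W$ of $\zb$. I would first prove the uniform pointwise fact: for every $z\in\Omega\cap W$ and every $z^\ast\in\widehat N_\Omega(z)$ there is some $\lambda\in N_D(P(z))$ with $\norm{\lambda}\le\kappa\norm{z^\ast}$ and $z^\ast=\nabla P(z)^T\lambda$. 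Fixing $\epsilon>0$, the point $z$ is a local minimizer over $W$ of the \emph{unconstrained} penalized function
\[ z'\mapsto-\skalp{z^\ast,z'-z}+\epsilon\norm{z'-z}+\kappa\norm{z^\ast}\dist{P(z'),D}. \]
Indeed, replacing a nearby $z'$ by a projection $\bar z'$ onto $\Omega$ (so $\norm{z'-\bar z'}=\dist{z',\Omega}\le\kappa\dist{P(z'),D}$ and, by the local Lipschitz continuity of $P$ with modulus $L$, $\norm{\bar z'-z}\le(1+\kappa L)\norm{z'-z}$), the Cauchy--Schwarz inequality together with the regular-normal estimate $\skalp{z^\ast,\bar z'-z}\le o(\norm{\bar z'-z})$ shows the penalized value is at least $\epsilon\norm{z'-z}-o(\norm{z'-z})\ge0$ near $z$, while it vanishes at $z$ since $P(z)\in D$.

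Applying Fermat's rule and the limiting sum rule (all summands are Lipschitz) to this function, combined with the chain rule $\partial\big(\dist{P(\cdot),D}\big)(z)\subset\nabla P(z)^T\partial\dist{\cdot,D}(P(z))$ for a Lipschitz function precomposed with the smooth map $P$, and the bound $\partial\dist{\cdot,D}(P(z))\subset N_D(P(z))\cap\B$, gives $z^\ast\in\nabla P(z)^T\lambda_\epsilon+\epsilon\B$ with $\lambda_\epsilon\in N_D(P(z))$ and $\norm{\lambda_\epsilon}\le\kappa\norm{z^\ast}$; letting $\epsilon\downarrow0$ and using boundedness of $\lambda_\epsilon$ and closedness of $N_D(P(z))$ proves the pointwise fact. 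Finally, given $z^\ast\in N_\Omega(\zb)$ with defining sequences $z_k\stackrel{\Omega}{\to}\zb$ and $z_k^\ast\to z^\ast$, $z_k^\ast\in\widehat N_\Omega(z_k)$, the pointwise fact supplies $\lambda_k\in N_D(P(z_k))$ with $z_k^\ast=\nabla P(z_k)^T\lambda_k$ and $\norm{\lambda_k}\le\kappa\norm{z_k^\ast}$. The uniform bound keeps $\{\lambda_k\}$ bounded, so along a subsequence $\lambda_k\to\lambda$; since $P(z_k)\to P(\zb)$ within $D$ and $\nabla P$ is continuous, outer semicontinuity of the limiting normal cone yields $\lambda\in N_D(P(\zb))$ and $z^\ast=\nabla P(\zb)^T\lambda$. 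I expect this passage to the limit to be the main obstacle, and it is precisely the multiplier bound $\norm{\lambda_k}\le\kappa\norm{z_k^\ast}$ delivered by the penalization that makes it go through.

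The equality \eqref{EqInclNormalCone} is then immediate. When $D$ is convex, $\widehat N_D(P(\zb))=N_D(P(\zb))$, so \eqref{EqInclRegNormalCone} becomes $\nabla P(\zb)^TN_D(P(\zb))\subset\widehat N_\Omega(\zb)$, while \eqref{EqInclLimNormalCone} gives $N_\Omega(\zb)\subset\nabla P(\zb)^TN_D(P(\zb))$; chaining these with $\widehat N_\Omega(\zb)\subset N_\Omega(\zb)$ squeezes all four sets together and forces the two equalities.
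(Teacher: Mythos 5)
Your proof is correct, but it takes a genuinely different route from the paper's. The paper disposes of the proposition in three citations: \eqref{EqInclRegNormalCone} is quoted from \cite[Theorem 6.14]{RoWe98}, the second inclusion in \eqref{EqInclLimNormalCone} from \cite[Theorem 4.1]{HenJouOut02}, and the convex case is then the same squeezing argument you give. You instead re-prove both nontrivial inclusions from scratch: for \eqref{EqInclRegNormalCone} you pass through the polar of the linearized cone, establishing the chain $\widehat N_\Omega(\zb)=(T_\Omega(\zb))^\circ\supset(T_\Omega^{\rm lin}(\zb))^\circ\supset\nabla P(\zb)^T\widehat N_D(P(\zb))$, which is a clean elementary substitute for the textbook citation; for \eqref{EqInclLimNormalCone} your exact-penalization argument (Fermat's rule plus the sum rule, the chain rule for $\dist{P(\cdot),D}$, and $\partial\dist{\cdot,D}(P(z))\subset N_D(P(z))\cap\B$) is essentially the standard proof of the Henrion--Jourani--Outrata calmness result that the paper cites. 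Notably, your pointwise fact is stronger than what the bare inclusion requires: the multiplier bound $\norm{\lambda}\le\kappa\norm{z^\ast}$, valid uniformly for $z\in\Omega\cap W$, is precisely the bounded multiplier property that the paper only establishes later in Theorem \ref{ThBMP} (there via a citation to \cite{GfrOut15}), and it is exactly this bound that lets your final compactness-and-outer-semicontinuity passage to the limit go through. So the paper's proof buys brevity, while yours buys self-containedness and, as a by-product, the quantitative content of Theorem \ref{ThBMP}.
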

\begin{proof}
  The inclusion \eqref{EqInclRegNormalCone} follows from \cite[Theorem 6.14]{RoWe98}.
  The first inclusion in {(\ref{EqInclLimNormalCone})} follows immediately from the definitions of the regular/limiting normal cone, whereas the second one  follows from \cite[Theorem 4.1]{HenJouOut02}. When $D$ is convex, the regular normal cone coincides with the limiting normal cone and hence (\ref{EqInclNormalCone}) follows by combining (\ref{EqInclRegNormalCone}) {and}  (\ref{EqInclLimNormalCone}).
\end{proof}

In the case where $D =\R^{s_1}_-\times\{0\}^{s_2}$, it is well-known in nonlinear programming theory that  MFCQ or equivalently NNAMCQ is a necessary and sufficient condition for the compactness of the set of Lagrange multipliers. In the case where $D\not =\R^{s_1}_-\times\{0\}^{s_2}$, NNAMCQ also implies the boundedness of the multipliers. However MSCQ is weaker than NNAMCQ and hence the set of Lagrange multipliers may be  unbounded if MSCQ holds but NNAMCQ fails. However Theorem \ref{ThBMP} shows that under MSCQ one can extract some uniformly compact subset of the multipliers.

\begin{definition}[cf. \cite{GfrMo16}]
  Let $\zb\in \Omega:=\{z| P(z)\in D\}$ with $P$ smooth and $D$ closed. We say that the {\em bounded multiplier property} (BMP) holds at $\zb$ for the system $P(z)\in D$, if there is some modulus $\kappa\geq 0$ and some neighborhood $W$ of $\zb$ such that for every $z\in W\cap \Omega$ and every $z^\ast\in N_{\Omega}(z)$ there is some $\lambda\in \kappa\norm{z^\ast}
  \B_{\R^s}\cap N_D(P(z))$ satisfying
  \[z^\ast=\nabla P(z)^T\lambda.\]
\end{definition}
The following theorem gives a sharper upper estimate for the normal cone  than (\ref{EqInclLimNormalCone}).
\begin{theorem}\label{ThBMP}  Let $\zb\in \Omega:=\{z\mv P(z)\in D\}$ and assume that MSCQ holds at the point $\zb$ for the system $P(z)\in D$.
Let $W$ denote an open neighborhood of $\zb$ and let $\kappa\geq 0$ be a real such that
$$\dist{z, \Omega} \leq \kappa \dist{P(z), D} \quad \forall z\in W.$$ Then
\[N_{\Omega}(z)\subset \left \{z^\ast\in\R^d\mv \exists \lambda\in \kappa\norm{z^\ast} \B_{\R^s}\cap N_D(P(z))\;\mbox{with}\; z^\ast=\nabla P(z)^T\lambda \right \}\quad \forall z\in W.\]
In particular BMP holds at $\zb$ for the system $P(z)\in D$.
\end{theorem}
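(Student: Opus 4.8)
The plan is to prove the displayed inclusion first for the \emph{regular} normal cone $\widehat N_\Omega(z)$ at an arbitrary $z\in\Omega\cap W$, and then to recover the full (limiting) statement by an outer-semicontinuity argument, where the point is that the quantitative multiplier bound is exactly what keeps the extracted multiplier sequence bounded. Note that for $z\notin\Omega$ we have $N_\Omega(z)=\emptyset$ and the inclusion is vacuous, so it suffices to treat $z\in\Omega\cap W$.

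\textbf{Step 1 (an exact-penalty inequality).} Fix $z\in\Omega\cap W$ and $z^\ast\in\widehat N_\Omega(z)$, so $\langle z^\ast,z'-z\rangle\le \oo(\norm{z'-z})$ for $z'\in\Omega$. For $u$ near $z$ I would take a nearest point $\bar u$ of $u$ in the closed set $\Omega$; since $u\in W$ the error bound gives $\norm{u-\bar u}=\dist{u,\Omega}\le\kappa\dist{P(u),D}$, and because $P$ is locally Lipschitz with $P(z)\in D$ one gets $\norm{\bar u-z}\le(1+\kappa L)\norm{u-z}$. Splitting $\langle z^\ast,u-z\rangle=\langle z^\ast,u-\bar u\rangle+\langle z^\ast,\bar u-z\rangle$ and inserting the two bounds should yield
\[
\langle z^\ast,u-z\rangle\ \le\ \kappa\norm{z^\ast}\,\dist{P(u),D}+\oo(\norm{u-z})\qquad\text{for all $u$ near $z$,}
\]
which I will call the penalized inequality.

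\textbf{Step 2 (first-order condition via a lifted functional).} The idea is to read the penalized inequality as a first-order optimality condition and to remove the composition with $P$ by adding an auxiliary variable. Define on $\R^d\times\R^s$
\[
\Phi(u,y):=\kappa\norm{z^\ast}\,\norm{P(u)-y}+\delta_D(y)-\langle z^\ast,u-z\rangle .
\]
Since $\dist{P(u),D}\le\norm{P(u)-y}$ for $y\in D$, the penalized inequality shows $\Phi(u,y)\ge \oo(\norm{(u,y)-(z,P(z))})$, i.e.\ $(z,P(z))$ minimizes $\Phi$ up to first order, whence $0\in\widehat\partial\Phi(z,P(z))\subset\partial\Phi(z,P(z))$. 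The crucial observation is that the nonsmoothness of $\Phi$ now enters only through $\psi:=\norm{\cdot}\circ\Psi$ with $\Psi(u,y):=P(u)-y$, whose Jacobian $[\nabla P(z),-I]$ is \emph{surjective}; this validates the qualification condition in the chain rule and gives $\partial\psi(z,P(z))\subset\{(\nabla P(z)^T\mu,-\mu)\mv\mu\in\B_{\R^s}\}$. Combining this with the sum rule—legitimate because $\psi$ is locally Lipschitz, so its horizon subdifferential is trivial—and with $\partial\delta_D(P(z))=N_D(P(z))$, I expect $0\in\partial\Phi(z,P(z))$ to split componentwise into $z^\ast=\nabla P(z)^T\lambda$ and $\lambda=\kappa\norm{z^\ast}\mu=\nu\in N_D(P(z))$ with $\mu\in\B_{\R^s}$, so that $\lambda\in\kappa\norm{z^\ast}\B_{\R^s}\cap N_D(P(z))$. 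This is the asserted estimate already for $z^\ast\in\widehat N_\Omega(z)$.

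\textbf{Step 3 (passage to limiting normals and BMP).} For $z^\ast\in N_\Omega(z)$ I would write $z^\ast=\lim_k z_k^\ast$ with $z_k^\ast\in\widehat N_\Omega(z_k)$ and $z_k\to z$ in $\Omega$; for large $k$ we have $z_k\in W$, so Step 2 supplies $\lambda_k\in\kappa\norm{z_k^\ast}\B_{\R^s}\cap N_D(P(z_k))$ with $z_k^\ast=\nabla P(z_k)^T\lambda_k$. Since $\norm{\lambda_k}\le\kappa\norm{z_k^\ast}$ is bounded, a convergent subsequence $\lambda_k\to\lambda$ exists, and continuity of $\nabla P$ together with outer semicontinuity of $N_D(\cdot)$ then delivers $z^\ast=\nabla P(z)^T\lambda$ with $\lambda\in\kappa\norm{z^\ast}\B_{\R^s}\cap N_D(P(z))$. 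As $z\in\Omega\cap W$ was arbitrary, this is the displayed inclusion on all of $W$, and BMP follows immediately with the same modulus $\kappa$ and neighborhood $W$.

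The main obstacle is Step 2: the naive estimate $\widehat\partial(\dist{P(\cdot),D})\subset\nabla P^T\widehat N_D(P(\cdot))$ fails when $\nabla P$ is not surjective, so one cannot differentiate the composite distance function directly. Lifting to the pair $(u,y)$ and penalizing $\norm{P(u)-y}$ restores surjectivity of the inner Jacobian $[\nabla P(z),-I]$, and it is precisely this device—rather than any qualification on the data—that makes both the multiplier extraction and the quantitative norm bound go through.
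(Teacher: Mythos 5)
Your proof is correct, but it follows a genuinely different route from the paper's. The paper's argument is essentially a two-step citation: it first notes that the error bound on $W$ makes $M(z)=P(z)-D$ metrically subregular (with the same modulus $\kappa$) at every $z\in\Omega\cap W$, then invokes \cite[Proposition 4.1]{GfrOut15} to get $N_\Omega(z)\subset\{z^\ast\mv\exists\lambda\in\kappa\norm{z^\ast}\B_{\R^s}:(z^\ast,\lambda)\in N_{\Gr M}((z,0);(0,0))\}$ directly at the level of (directional) limiting normals, and finally computes $N_{\Gr M}((z,0))$ exactly via \cite[Exercise 6.7]{RoWe98}, using that $(z,w)\mapsto P(z)-w$ has surjective Jacobian. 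You instead give a self-contained penalization proof: the error bound yields your penalized first-order inequality for regular normals (Step 1), subdifferential calculus on the lifted functional $\Phi$ extracts a multiplier with the quantitative bound $\kappa\norm{z^\ast}$ (Step 2), and a compactness/outer-semicontinuity argument — which works precisely because of that uniform bound — upgrades the statement from $\widehat N_\Omega$ to $N_\Omega$ (Step 3). In effect you reprove from scratch the special case of \cite[Proposition 4.1]{GfrOut15} that the paper cites; what your route buys is elementarity (only the Lipschitz sum and chain rules, $\widehat\partial\subset\partial$, and osc of $N_D$ are needed), while the paper's route buys brevity and avoids the separate limiting passage by working with limiting normals throughout. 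One small inaccuracy in your closing remark: the ``naive'' composite estimate does not actually fail, provided one passes to the \emph{limiting} subdifferential — since $\dist{\cdot,D}$ is globally Lipschitz, its horizon subdifferential is trivial, so $\partial(\dist{P(\cdot),D})(z)\subset\nabla P(z)^T\partial\dist{\cdot,D}(P(z))=\nabla P(z)^T\bigl(N_D(P(z))\cap\B_{\R^s}\bigr)$ holds with no surjectivity assumption (cf. \cite[Theorem 10.6, Example 8.53]{RoWe98}); what genuinely fails without surjectivity is the chain rule for the \emph{regular} subdifferential. Your lifting device is therefore a valid (and transparent) fix, but the essential move is the passage from $\widehat\partial$ to $\partial$, not the lifting itself.
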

\begin{proof} Under the assumption, the set-valued map $M(z):=P(z)-D$ is metrically subregular at $(\bar z,0)$. The definition of the metric subregularity justifies the existence of the open neighborhood $W$ and the number $\kappa$ in the assumption. Hence
   for each $z\in M^{-1}(0)\cap W=\Omega\cap W$ the map $M$ is also metrically subregular at $(z,0)$ and by applying  {\cite[Proposition 4.1]{GfrOut15}}  we obtain
    \[N_\Omega(z)=N_{M^{-1}(0)}(z;0)\subset\{z^\ast\mv \exists \lambda\in \kappa\norm{z^\ast}\B_{\R^s}: (z^\ast,\lambda)\in N_{\Gr M}((z,0);(0,0))\}.\]
It follows from \cite[Exercise 6.7]{RoWe98} that $$N_{\Gr M}((z,0);(0,0))=N_{\Gr M}((z,0))= \{(z^\ast,\lambda)\mv -\lambda\in N_D(P(z)), z^\ast=\nabla P(z)^T(-\lambda)\}.$$ Hence the assertion follows.
\end{proof}


\section{Failure of {MPCC-tailored  constraint qualifications for problem (MPCC)}}
In this section, we discuss difficulties involved in {MPCC-tailored} constraint qualifications for the problem (MPCC) by
considering the constraint system for problem
 (MPCC) in  the following form
$$
\widetilde \Omega :=\left \{(x,y,\lambda):  \begin{array}{l}
 0=h(x,y, \lambda):=\phi(x,y)+\nabla g(y)^T\lambda,\\
 0\geq g(y) \perp -\lambda \leq 0\\
 G(x,y)\leq 0\end{array}\right \},
$$
where    $\phi:\R^n\times \R^m\to\R^m$ and  $G:\R^n\times\R^m\to \R^p$ are   continuously differentiable and  $g:\R^m\to\R^q$ is twice continuously differentiable.

Given a triple $(\xb,\yb,\lb)\in \widetilde \Omega $ we define the following index sets of active constraints:
\begin{eqnarray*}
  &&{\cal I}_g:={\cal I}_g(\yb,\lb):=\{i{\in \{1,\ldots,q\}}\mv g_i(\yb)=0, \lb_i>0\},\\
  &&{\cal I}_\lambda:={\cal I}_\lambda(\yb,\lb):=\{i{\in \{1,\ldots,q\}}\mv g_i(\yb)<0, \lb_i=0\},\\
  &&{\cal I}_0:={\cal I}_0(\yb,\lb):=\{i{\in \{1,\ldots,q\}}\mv g_i(\yb)=0, \lb_i=0\},\\
  &&{\cal I}_G:={\cal I}_G(\xb,\yb):=\{i{\in \{1,\ldots,p\}}\mv G_i(\xb,\yb)=0\}.
\end{eqnarray*}
\begin{definition}[\cite{ScheelScholtes}]
We say that  MPCC-MFCQ holds at $(\xb,\yb,\lb)$ if the gradient vectors
\begin{equation}\label{MPEC-MFCQ} \nabla h_i(\xb, \yb, \lb), i=1,\dots, m, \  (0, \nabla g_i(\yb) ,0), i\in {\cal I}_g\cup {\cal I}_0, \  (0,0,e_i) , i\in {\cal I}_\lambda\cup {\cal I}_0,
\end{equation}
where  $e_i$ denotes the unit vector with the ith component equal  to $1$, are linearly independent and there exists a vector $(d_x,d_y,d_\lambda)\in\R^n\times\R^m\times\R^q$ orthogonal to the vectors in (\ref{MPEC-MFCQ}) and such that
$$\nabla G_i(\xb,\yb)(d_x,d_y)<0,  i\in {\cal I}_G.$$
We say that {MPCC-LICQ} holds at $(\xb,\yb,\lb)$ if the gradient vectors
$$\nabla h_i(\xb, \yb, \lb), i=1, \dots, m, \  (0,\nabla g_i(\yb) ,0), i\in {\cal I}_g\cup {\cal I}_0, \  (0,0,e_i)  , i\in {\cal I}_\lambda\cup I_0 ,\ (\nabla G_i (\bar x,\bar y),0), i\in {\cal I}_G
$$
are linearly independent.
\end{definition}
{MPCC-MFCQ} implies that for every partition $(\beta_1,\beta_2)$ of ${\cal I}_0$ the branch
\begin{equation}\label{branch}
\left \{ \begin{array}{l}
 \phi(x,y)+\nabla g(y)^T\lambda=0,\\
  g_i(y)=0, {\lambda_i\geq 0,  i\in {\cal I}_g,\; \lambda_i=0, g_i(y) \leq 0,}\; i\in {\cal I}_\lambda,\\
  g_i(y)=0, \lambda_i\geq 0,i\in\beta_1,\; g_i(y)\leq 0,\lambda_i=0,i\in\beta_2,\\
  G(x,y)\leq 0
  \end{array} \right.
\end{equation} satisfies MFCQ at $(\xb,\yb,\lb)$.

We now show that {MPCC}-MFCQ never {holds} for (MPCC)  if the lower level program has more than one multiplier.
\begin{proposition}\label{Prop5}
Let $(\xb,\yb,\lb)\in \widetilde \Omega$ and assume that there exists a second multiplier $\hat\lambda\not=\lb$ such that $(\xb,\yb,\hat \lambda)\in \widetilde \Omega$. Then for every partition $(\beta_1,\beta_2)$ of ${\cal I}_0$ the branch (\ref{branch})
does not fulfill MFCQ  at $(\xb,\yb,\lb)$.
\end{proposition}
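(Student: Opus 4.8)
The plan is to disprove MFCQ for the branch (\ref{branch}) by producing an explicit nontrivial certificate of failure, using the standard dual (Motzkin/Gordan) reformulation of MFCQ: at a feasible point, MFCQ holds if and only if the only linear combination of the equality-constraint gradients (with unrestricted coefficients) together with the active inequality-constraint gradients (with \emph{nonnegative} coefficients) that equals the zero vector is the trivial one. Thus it suffices to exhibit coefficients, not all zero and with the correct sign pattern, that annihilate the active gradients of (\ref{branch}) at $(\xb,\yb,\lb)$.

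First I would extract the algebraic content of having two multipliers. Since $(\xb,\yb,\lb)$ and $(\xb,\yb,\hat\lambda)$ both lie in $\widetilde\Omega$, the equation $h=0$ gives $\phi(\xb,\yb)+\nabla g(\yb)^T\lb=0=\phi(\xb,\yb)+\nabla g(\yb)^T\hat\lambda$. Subtracting and putting $\mu:=\lb-\hat\lambda\neq 0$ yields $\nabla g(\yb)^T\mu=\sum_i\mu_i\nabla g_i(\yb)=0$. The complementarity built into $\widetilde\Omega$ pins down the sign pattern of $\mu$: for $i\in{\cal I}_\lambda$ one has $g_i(\yb)<0$, so $\hat\lambda_i=0=\lb_i$ and hence $\mu_i=0$; for $i\in{\cal I}_0$ one has $\lb_i=0$ while $\hat\lambda_i\geq 0$, so $\mu_i=-\hat\lambda_i\leq 0$; on ${\cal I}_g$ no sign restriction is available. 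Consequently $\mu$ is supported on ${\cal I}_g\cup{\cal I}_0={\cal I}_g\cup\beta_1\cup\beta_2$, and $\sum_{i\in{\cal I}_g\cup\beta_1\cup\beta_2}\mu_i\nabla g_i(\yb)=0$.

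Second, for a fixed partition $(\beta_1,\beta_2)$ of ${\cal I}_0$, I would read off the relevant gradients of (\ref{branch}) and assemble the certificate. The gradients I need are $(0,\nabla g_i(\yb),0)$, arising from the \emph{equalities} $g_i(y)=0$ for $i\in{\cal I}_g\cup\beta_1$, and the identical-looking $(0,\nabla g_i(\yb),0)$, arising from the \emph{active inequalities} $g_i(y)\leq 0$ for $i\in\beta_2$ (active because $g_i(\yb)=0$ on ${\cal I}_0$). I would set to zero all coefficients on the gradients $\nabla h_i$, on the $\lambda$-equalities $(0,0,e_i)$, on the active sign constraints $-\lambda_i\leq 0$ ($i\in\beta_1$) and on the active $G_i\leq 0$ ($i\in{\cal I}_G$), and attach the coefficient $-\mu_i$ to $(0,\nabla g_i(\yb),0)$ for every $i\in{\cal I}_g\cup\beta_1\cup\beta_2$. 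The total is then $\big(0,\,-\sum_i\mu_i\nabla g_i(\yb),\,0\big)=(0,0,0)$ by the displayed relation; the coefficients are not all zero since $\mu\neq 0$; and the coefficients on the active inequalities, those indexed by $\beta_2$, equal $-\mu_i\geq 0$, which is precisely the admissible sign. This is a valid certificate that MFCQ fails.

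The one delicate point, and the step I expect to be the crux, is exactly this sign bookkeeping on $\beta_2$: the branch assigns the indices of ${\cal I}_0$ split off into $\beta_2$ to the inequality side, so their multiplier coefficients are constrained to be nonnegative, and it is the one-sided relation $\mu_i=-\hat\lambda_i\leq 0$ on ${\cal I}_0$ (from $\lb_i=0$ together with $\hat\lambda_i\geq 0$) that makes $-\mu_i\geq 0$ and renders the certificate admissible \emph{for every} choice of $(\beta_1,\beta_2)$. The indices in ${\cal I}_g\cup\beta_1$ sit on the equality side, so their coefficients are sign-free and cause no trouble, while $\mu$ vanishes on ${\cal I}_\lambda$ so those constraints never enter. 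Since the partition was arbitrary, this shows MFCQ fails for every branch, which in particular yields the failure of MPCC-MFCQ recorded just before the proposition.
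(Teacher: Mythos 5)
Your proof is correct and is essentially the paper's own argument: the paper's one-line proof exhibits exactly the same certificate, namely the vector $\hat\lambda-\lb$ (your $-\mu$), noting that $\nabla g(\yb)^T(\hat\lambda-\lb)=0$, that $(\hat\lambda-\lb)_i\geq 0$ for $i\in{\cal I}_\lambda\cup\beta_2$, and that $\hat\lambda-\lb\neq 0$, so MFCQ fails by positive linear dependence of the active gradients. You have merely spelled out the sign bookkeeping and the Motzkin-type dual characterization that the paper leaves as ``immediate.''
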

\begin{proof}
  Since $\nabla g(\yb)^T(\hat\lambda-\lb)=0$, {$(\hat\lambda-\lb)_i\geq 0$, $i\in {{\cal I}_\lambda\cup \beta_2}$ and $\hat\lambda-\lb\not =0$,} the assertion follows immediately.
\end{proof}
Since {MPCC}-MFCQ is stronger than the {MPCC}-LICQ, we have the following corollary immediately.
\begin{corollary}\label{Cor1}Let $(\xb,\yb,\lb)\in \widetilde \Omega$ and assume that there exists a second multiplier $\hat\lambda\not=\lb$ such that $(\xb,\yb,\hat \lambda)\in \widetilde \Omega$. Then {MPCC}-LICQ fails at $(\xb,\yb,\lb)$.
\end{corollary}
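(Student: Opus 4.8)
The plan is to produce directly a nontrivial linear dependence among the gradient vectors whose independence MPCC-LICQ demands, manufactured from the difference of the two multipliers. First I would record that, since $(\xb,\yb,\lb)$ and $(\xb,\yb,\hat\lambda)$ both lie in $\widetilde\Omega$, they satisfy the same equation $h=0$; subtracting $\phi(\xb,\yb)+\nabla g(\yb)^T\lb=0$ from $\phi(\xb,\yb)+\nabla g(\yb)^T\hat\lambda=0$ yields $\nabla g(\yb)^T(\hat\lambda-\lb)=0$. Writing $\mu:=\hat\lambda-\lb\neq 0$, this reads $\sum_{i=1}^q\mu_i\nabla g_i(\yb)=0$.

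The key step is to pin down the support of $\mu$. Here I would invoke the complementarity contained in the membership $(\xb,\yb,\hat\lambda)\in\widetilde\Omega$, namely $0\ge g(\yb)\perp-\hat\lambda\le 0$, which forces $\hat\lambda\ge 0$ together with $\hat\lambda_i g_i(\yb)=0$ for every $i$. For $i\in\I_\lambda$ one has $g_i(\yb)<0$, so complementarity gives $\hat\lambda_i=0$, while $\lb_i=0$ by the definition of $\I_\lambda$; hence $\mu_i=0$ on $\I_\lambda$. Thus $\mu$ is supported in $\I_g\cup\I_0$, and since $\mu\neq 0$ its restriction to $\I_g\cup\I_0$ is nonzero.

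Substituting back into $\sum_i\mu_i\nabla g_i(\yb)=0$ then produces $\sum_{i\in\I_g\cup\I_0}\mu_i\,(0,\nabla g_i(\yb),0)=(0,0,0)$ in $\R^n\times\R^m\times\R^q$, a nontrivial linear combination of vectors that appear verbatim in the MPCC-LICQ list, so those gradient vectors are linearly dependent and MPCC-LICQ cannot hold at $(\xb,\yb,\lb)$. Equivalently, matching the remark preceding Proposition \ref{Prop5}, one may argue that MPCC-LICQ implies MPCC-MFCQ (full independence of the whole gradient family yields both independence of the subfamily omitting the $G$-gradients and, because that matrix has full row rank, a direction $(d_x,d_y,d_\lambda)$ orthogonal to the subfamily with $\nabla G_i(\xb,\yb)(d_x,d_y)<0$ for $i\in\I_G$); since Proposition \ref{Prop5} shows the branch (\ref{branch}) violates MFCQ for every partition, MPCC-MFCQ fails, and hence so does the stronger MPCC-LICQ. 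The only genuine work is the index bookkeeping establishing $\mu_i=0$ on $\I_\lambda$; everything else is immediate, so I do not anticipate a real obstacle.
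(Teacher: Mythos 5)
Your proposal is correct, and your primary argument takes a genuinely more direct route than the paper's. The paper disposes of the corollary in one line: MPCC-LICQ implies MPCC-MFCQ (the paper's wording inverts ``stronger'', but that is a typo), and Proposition~\ref{Prop5} already shows that every branch (\ref{branch}) violates MFCQ, so MPCC-MFCQ and hence MPCC-LICQ fail; this is exactly your secondary argument. Your main argument instead exhibits the linear dependence explicitly: from $\nabla g(\yb)^T(\hat\lambda-\lb)=0$, the complementarity of $\hat\lambda$ forcing $\hat\lambda_i=0=\lb_i$ on $\I_\lambda$, and the fact that $\I_g,\I_\lambda,\I_0$ partition $\{1,\ldots,q\}$ (by complementarity of $\lb$), the nonzero vector $\mu=\hat\lambda-\lb$ yields $\sum_{i\in\I_g\cup\I_0}\mu_i\,(0,\nabla g_i(\yb),0)=(0,0,0)$, a nontrivial vanishing combination of vectors that appear verbatim in the MPCC-LICQ family. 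This bypasses MFCQ and the branch structure entirely, and it needs no sign bookkeeping: for LICQ failure only linear dependence matters, whereas Proposition~\ref{Prop5} additionally exploits $(\hat\lambda-\lb)_i\geq 0$ on $\I_\lambda\cup\beta_2$ to produce an abnormal multiplier killing MFCQ for each branch. The same key identity underlies both proofs, so what your route buys is self-containedness (no appeal to Proposition~\ref{Prop5} or to the LICQ$\Rightarrow$MFCQ implication, whose justification via full row rank you correctly sketch), while the paper's route buys economy by reusing the proposition it has just proved.
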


{It is worth noting that our 
{result in Proposition \ref{Prop5} is} only valid under the assumption that $g(y)$ is independent of  $x$. In the case of bilevel programming where the 
lower level problem has a constraint dependent of the upper level variable, an example
given in \cite[Example 4.10]{Mehlitz-Wachsmuth} shows that  if the multiplier is not unique, then the corresponding MPCC-MFCQ may hold at some of the multipliers and fail to hold at others.}


\begin{definition}[see e.g. \cite{FleKanOut07}] Let $(\xb,\yb,\lb)$ be feasible for (MPCC). We say {MPCC-ACQ} 
 and 
 {MPCC-GCQ}  hold if
$$T_{\widetilde \Omega }(\xb,\yb,\lb)=T_{\rm MPCC}^{\rm lin}(\xb,\yb,\lb) \mbox{ and }  \widehat N_{\widetilde \Omega}(\xb,\yb,\lb)=(T_{\rm MPCC}^{\rm lin}(\xb,\yb,\lb))^\circ$$
respectively,  where
\begin{eqnarray*}\lefteqn{T_{\rm MPCC}^{\rm lin}(\xb,\yb,\lb)}\\
&:=&\left \{(u,v,\mu)\in\R^n\times\R^m\times\R^q\mv
\begin{array}{l}\nabla_{x} \phi(\xb,\yb)u+\nabla_y (\phi+\nabla_{y}(\lambda^Tg))(\xb,\yb)v +\nabla g(\yb)^T\mu=0,\\
\nabla g_i(\yb)v=0,i\in {\cal I}_g,\;\mu_i=0,i\in {\cal I}_\lambda,\\
\nabla g_i(\yb)v\leq 0,\mu_i\geq 0,\mu_i\nabla g_i(\yb)v=0, i\in {\cal I}_0,\\
\nabla G_i(\xb,\yb)(u,v)\leq 0,i\in {\cal I}_G
\end{array}\right \}\end{eqnarray*}
is the MPEC linearized cone at $(\xb,\yb,\lb)$.
\end{definition}
\noindent Note that {MPCC}-ACQ and {MPCC}-GCQ are the GACQ and GGCQ for the equivalent formulation of the set $\widetilde \Omega$ in the form of $P(z)\in D$ with $D$ involving the complementarity set
$$D_{cc}:=\{(a,b)\in \R_-^q\times \R_-^q| a^Tb=0\}$$
respectively. {MPCC}-MFCQ implies {MPCC}-ACQ (cf. \cite{FleKan05}) and from definition it is easy to see that {MPCC}-ACQ is stronger than {MPCC}-GCQ. Under {MPCC}-GCQ, it is known that a local optimal solution  of (MPCC) must be a M-stationary point (\cite[Theorem 14]{FleKanOut07}).  Although  {MPCC}-GCQ is weaker than most of other {MPCC-tailored} constraint qualifications,
the following example shows that the constraint qualification  {MPCC}-GCQ still can be violated  when the multiplier for the lower level  is not unique. In contrast to \cite[Example 6]{Ad-Hen-Out}, all the constraints are convex .
\begin{example}\label{Ex1}
Consider MPEC
\begin{eqnarray}
\min_{x,y} && F(x,y):=x_1-\frac{3}{2} y_1 + x_2-\frac 32 y_2- y_3 \nonumber \\
s.t. && 0\in \phi(x,y)+N_\Gamma( y), \label{EXGE}\\
&& G_1(x,y)=G_1(x):=-x_1-2x_2\leq 0,\nonumber \\
&& G_2(x,y)=G_2(x):=-2x_1-x_2\leq 0,\nonumber
\end{eqnarray}
where
$$\phi(x,y):=\left(\begin{array}{c}
  y_1-x_1\\
  y_2-x_2\\
  -1\end{array}\right), \quad \Gamma:=\left\{y\in \R^3|
g_1(y):=y_3+\frac12 y_1^2\leq 0,\;g_2(y):=y_3+\frac12 y_2^2\leq 0  \right\}.$$
Let $\xb=(0,0)$, $\yb=(0,0,0)$.
The lower level inequality system $g(y)\leq 0$ is convex  satisfying the Slater condition and therefore $y$ is a solution to the parametric generalized equation (\ref{EXGE})  if and only if $y'=y$ is a global minimizer of the optimization problem:
$\displaystyle \min_{y'} ~\langle \phi(x,y), y' \rangle \
 \mbox{ s.t. }  y'\in \Gamma,$ and if and only if there is a multiplier $\lambda$ fulfilling KKT-conditions
\begin{eqnarray}
\label{EqKKT}  &&\left(\begin{array}{c}
  y_1-x_1+\lambda_1y_1\\
  y_2-x_2+\lambda_2y_2\\
  -1+\lambda_1+\lambda_2\end{array}\right)=\left(\begin{array}
    {c}0\\0\\0
  \end{array}\right),\\
\nonumber && 0\geq y_3+\frac12 y_1^2\perp -\lambda_1\leq 0,\\
\nonumber &&0\geq y_3+\frac12 y_2^2\perp-\lambda_2\leq 0.
\end{eqnarray}
{Let ${\cal F}:=\{x\mv G_1(x)\leq 0, G_2(x)\leq 0\}$. Then  ${\cal F}={\cal F}_1\cup {\cal F}_2\cup{\cal F}_3$ where
\begin{eqnarray*}
  {\cal F}_1&:=&\left \{(x_1,x_2)\in \R^2 \mv 2\vert x_2\vert\leq x_1 \right \},\\
  {\cal F}_2&:=&\left\{(x_1,x_2)\in  \R^2\mv \frac {x_1}2\leq x_2\leq 2x_1\right\},\\
  {\cal F}_3&:=&\left\{(x_1,x_2)\in \R^2\mv  2\vert x_1\vert\leq x_2\right\}.
\end{eqnarray*}
Straightforward calculations yield that for each $x\in {\cal F}$ there exists a unique solution $y(x)$, which is given by
\[y(x)=\begin{cases}(\frac{x_1}2, x_2,-\frac 18 x_1^2)& \mbox{if $x\in{\cal F}_1$,}\\
( \frac{x_1+x_2}3, \frac{x_1+x_2}3,-\frac1{18}(x_1+x_2)^2)& \mbox{if $x\in{\cal F}_2$,}\\
(  x_1, \frac{x_2}2,-\frac 18 x_2^2)& \mbox{if $x\in{\cal F}_3$.}
\end{cases}\]}
Further, at {$\xb=(0,0)$ {we have} $y(\bar x)=(0,0,0)$} and  the set of the multipliers is
 $${ \Lambda:=}\{\lambda\in \R_+^2| \lambda_1+\lambda_2=1\},$$
while for all {$x\not=(0,0)$} the gradients of the lower level constraints active at $y(x)$ are linearly independent and the unique multiplier is given by
\begin{equation}\label{lambda}\lambda(x)=\begin{cases}(1,0)& \mbox{if $x\in{\cal F}_1$,}\\
( \frac{2x_1-x_2}{x_1+x_2}, \frac{2x_2-x_1}{x_1+x_2})&\mbox{if $x\in{\cal F}_2$,}\\
( 0,1)& \mbox{if $x\in{\cal F}_3$.}
\end{cases}\end{equation}
Since
\[F(x,y(x))=\begin{cases}
\frac 14 x_1-\frac 12 x_2+\frac 18 x_1^2&\mbox{if $x\in{\cal F}_1$},\\
\frac1{18}(x_1+x_2)^2&\mbox{if $x\in{\cal F}_2$},\\
\frac 14 x_2-\frac 12 x_1+\frac 18 x_2^2&\mbox{if $x\in{\cal F}_3$},\\
\end{cases}\]
{and ${\cal F}={\cal F}_1\cup {\cal F}_2\cup{\cal F}_3$,}  we see that $(\xb,\yb)$ is a globally optimal solution of the MPEC.

The original problem is equivalent to the following MPCC:
\begin{eqnarray*}
  \min_{x,y,\lambda}&& x_1-\frac 32 y_1 + x_2-\frac 32 y_2- y_3\\
  \mbox{s.t.}&&\mbox{$x,y,\lambda$ {fulfill \eqref{EqKKT},}}\\
  &&-2x_1-x_2\leq 0,\\
  &&-x_1-2x_2\leq 0.
\end{eqnarray*}
The feasible region  of this problem is
\begin{eqnarray*}
\widetilde \Omega=\bigcup_{\xb\not=x\in {\cal F}}\{(x,y(x),\lambda(x))\}
{\cup(\{(\xb,\yb)\}\times \Lambda)}.
\end{eqnarray*}
Any  $(\xb,\yb, \lambda)$  where $\lambda \in {\Lambda}$ is a globally optimal solution. However
it is easy to verify that unless  $\lambda_1=\lambda_2=0.5$,  any  $(\xb, \yb, \lambda)$ is not even a weak stationary point, implying by \cite[Theorem 7]{FleKanOut07} that {MPCC}-GCQ and consequently {MPCC}-ACQ {fails}  to hold. Now consider $\lambda=(0.5,0.5)$.
The MPEC  linearized cone $T^{\rm lin}_{\rm MPCC}(\xb,\yb,\lambda)$ is the collection of all $(u,v,\mu)$ such that
\begin{equation}\label{KKTeqn}
 \left(\begin{array}{c}
  1.5v_1-u_1\\
  1.5v_2-u_2\\
  \mu_1+\mu_2\end{array}\right)=\left(\begin{array}{c}
  0\\
  0\\
  0\end{array}\right),\quad
\begin{array}{l}v_3=0,\\
  -2u_1-u_2\leq 0,\; -u_1-2u_2\leq 0.\end{array}
\end{equation}
Next we compute the actual tangent cone $T_{\widetilde \Omega}(\xb,\yb,\lambda)$. Consider sequences $t_k\downarrow 0$, $(u^k,v^k,\mu^k)\to (u,v,\mu)$ such that $(\xb,\yb,\lambda)+t_k(u^k,v^k,\mu^k)\in\widetilde\Omega$. If  $u^k\not=0$ for infinitely many $k$, then {$\bar x+t_k u^k\not =0$} and hence $(\yb+t_kv^k,\lambda+t_k\mu^k)=(y(\xb+t_ku^k),\lambda(\xb+t_ku^k))$ for those $k$. Since $\lambda=(0.5,0.5)$, it follows from (\ref{lambda}) that $\xb+t_ku^k\in{\cal F}_2$ for infinitely many $k$, implying, by passing to a subsequence if necessary,
\[v=\lim_{k\to \infty}\frac{y(\xb+t_ku^k)-\yb}{t_k}=\frac 13(u_1+u_2,u_1+u_2,0)\]
and
\begin{eqnarray*}\mu&=&\lim_{k\to\infty}\frac{\lambda(\xb+t_ku^k)-\lambda}{t_k}=
\lim_{k\to\infty}\frac{(\frac{2u_1^k-u_2^k}{u_1^k+u_2^k}, \frac{2u_2^k-u_1^k}{u_1^k+u_2^k})-(0.5,0.5)}{t_k}\\
&=&\lim_{k\to\infty}1.5\frac{(\frac{u_1^k-u_2^k}{u_1^k+u_2^k}, \frac{u_2^k-u_1^k}{u_1^k+u_2^k})}{t_k}.\end{eqnarray*}
Hence $v_1=v_2=\frac{1}{3}(u_1+u_2),\ v_3=0$ {and $\mu_1+\mu_2=0$.}
{Also from (\ref{KKTeqn}), we have $u_1=u_2$ since $v_1=v_2$ and the tangent cone $T_{\widetilde \Omega}(\xb,\yb,\lambda)$ is  always a subset of the  MPEC  linearized cone $T^{\rm lin}_{\rm MPCC}(\xb,\yb,\lambda)$ (see e.g. \cite[Lemma 3.2]{FleKan05}).}
  Further, since  $\xb+t_ku^k\in{\cal F}_2$, we must have $u_1\geq 0$.
If $u^k=0$ for all but finitely many $k$, then we have $v^k=0$ and $\lambda+t_k \mu^k\in\Lambda$ implying $\mu_1+\mu_2=0$. Putting all together, we obtain that
 the actual tangent cone $T_{\widetilde\Omega}(\xb,\yb,\lambda)$ to the feasible set is the collection of all $(u,v,\mu)$ satisfying
 \begin{eqnarray*}
  &&u_1=u_2\geq 0, v_1=v_2=\frac 23 u_1,\\
  &&v_3=0, \mu_1+\mu_2=0.
 \end{eqnarray*}
 Now it is easy to see that $T_{\widetilde\Omega}(\xb,\yb,\lambda)\not=T^{\rm lin}_{\rm MPCC}(\xb,\yb,\lambda)$. Moreover since both $T_{\widetilde\Omega}(\xb,\yb,\lambda)$ and $T^{\rm lin}_{\rm MPCC}(\xb,\yb,\lambda)$ are convex polyhedral {sets}, one also has
$(T_{\widetilde\Omega}(\xb,\yb,\lambda))^\circ\not=(T^{\rm lin}_{\rm MPCC}(\xb,\yb,\lambda))^\circ$
and thus MPEC-GCQ does not hold  for $\lambda=(0.5,0.5)$ as well.
\if{We will now show that at any solution point $(\xb,\yb,\lambda)$ the constraint qualifications MPEC-ACQ and MPEC-GCQ fail to hold.
Consider $\lambda=(1,0)$.
The MPEC linearized cone $T^{\rm lin}_{\rm MPCC}(\xb,\yb,\lambda)$ is the collection of all $(u,v,\mu)$ such that
\begin{eqnarray*}
 \left(\begin{array}{c}
  2v_1-u_1\\
  v_2-u_2\\
  \mu_1+\mu_2\end{array}\right)=0,\quad
\begin{array}{l}v_3=0,\\
  v_3\leq 0,\; \mu_2\geq 0,\; \mu_2v_3=0,\\
  -2u_1-u_2\leq 0,\; -u_1-2u_2\leq 0,\end{array}
\end{eqnarray*}
which can be simplified to
\begin{eqnarray*}
 \left(\begin{array}{c}
  2v_1-u_1\\
  v_2-u_2\\
  \mu_1+\mu_2\end{array}\right)=0,\quad
\begin{array}{l}v_3=0,\\
\mu_2\geq 0,\\
  -2u_1-u_2\leq 0,\; -u_1-2u_2\leq 0.\end{array}
\end{eqnarray*}
Next we compute the actual tangent cone $T_\Omega(\xb,\yb,\lambda)$. Consider sequences $t_k\downarrow 0$, $(u^k,v^k,\mu^k)\to (u,v,\mu)$ such that $(\xb,\yb,\lambda)+t_k(u^k,v^k,\mu^k)\in\Omega$ and $u^k\not=0$ $\forall k$. If $\xb+t_ku^k\in{\cal F}_1$ for infinitely many $k$, then immediately $u_1\geq 2\vert u_2\vert$, {$v=(\frac {u_1}2,u_2, 0)$}, $\mu=(0,0)$ follows. On the other hand, if $\xb+t_ku^k\in{\cal F}_2$ for infinitely many $k$, then, by passing to a subsequence if necessary,
\begin{eqnarray*}\mu&=&\lim_{k\to\infty}\frac{\lambda(\xb+t_ku^k)-\lambda}{t_k}=
\lim_{k\to\infty}\frac{(\frac{2u_1^k-u_2^k}{u_1^k+u_2^k}, \frac{2u_2^k-u_1^k}{u_1^k+u_2^k})-(1,0)}{t_k}\\
&=&\lim_{k\to\infty}\frac{(\frac{u_1^k-2u_2^k}{u_1^k+u_2^k}, \frac{2u_2^k-u_1^k}{u_1^k+u_2^k})}{t_k}.\end{eqnarray*}
Hence $2u_2=u_1$ and $\mu_1+\mu_2=0$. Finally, the case $\xb+t_ku^k\in{\cal F}_3$ is not possible due to $\lambda(x)=(0,1)$ $\forall 0\not=x\in {\cal F}_3$. Putting all together, we obtain that
 the actual tangent cone to the feasible set is the collection of all $(u,v,\mu)$ satisfying
 \begin{eqnarray*}
 \left(\begin{array}{c}
  2v_1-u_1\\
  v_2-u_2\\
  \mu_1+\mu_2\end{array}\right)=0,\quad
\begin{array}{l}v_3=0,\\
 \mu_2\geq 0,\; \mu_2(2u_2-u_1)=0,\\
  u_1\geq 2\vert u_2\vert.\end{array}
\end{eqnarray*}
Since
$(T_\Omega(\xb,\yb,\lambda))^\circ\not=(T^{\rm lin}_{\rm MPCC}(\xb,\yb,\lambda))^\circ$,
 MPCC-GCQ does not hold. The other cases $\lambda_1,\lambda_2>0, \lambda_1+\lambda_2=1$ and $\lambda=(0,1)$ can be treated similarly and we again obtain that  MPCC-GCQ does not hold.
}\fi

\end{example}

\section{Sufficient condition for MSCQ}
As we discussed in the introduction and section 3, there are much difficulties involved in  formulating {an}  MPEC  as (MPCC). In this section, we turn our attention to  problem (MPEC)
with the constraint system defined in the following form
\begin{equation}\label{Omega}
\Omega:=\left \{(x,y):  \begin{array}{l}
0\in \phi(x,y)+ \widehat{N}_\Gamma(y)\\
G(x,y)\leq 0\end{array}\right \},
\end{equation}
where
$\Gamma:=\{y\in \R^m | g(y)\leq 0\}$,
$\phi:\R^n\times \R^m\to\R^m$ and  $G:\R^n\times\R^m\to \R^p$
are   continuously differentiable and  $g:\R^m\to\R^q$ is twice continuously differentiable.
Let $(\xb,\yb)$ be a feasible solution of problem (MPEC).
We assume that  MSCQ is fulfilled for the  constraint $g(y)\leq 0$ at $\yb$. Then by definition MSCQ also holds for all points $y\in\Gamma$ near $\yb$ and by Proposition \ref{PropInclNormalcone}  the following equations hold for such $y$:
\[N_\Gamma(y)=\widehat N_\Gamma(y)=\nabla g(y)^T N_{\R^q_-}(g(y)),\]
where $N_{\R^q_-}(g(y))=\{\lambda\in\R^q_+\mv\lambda_i=0, i\not\in \I(y)\}$ {and $\I(y):=\{i\in\{1,\ldots,q\}\mv g_i(y)=0\}$ is the index set of active inequality constraints.}

For the sake of simplicity we do not include equality constraints in either the upper or the lower level constraints. We are using MSCQ as the basic constraint qualification for both the upper and the lower level constraints and this allows us to write an equality constraint $h(x)=0$ equivalently as two inequality constraints $h(x)\leq0,\ -h(x)\leq 0$ without affecting MSCQ.

In the case where $\Gamma$ is convex, MSCQ is proposed in  \cite{YeYe} as a constraint qualification for the M-stationary condition.  Two types of sufficient conditions were given for MSCQ. One is the case when all involved functions are affine and the other is when metric regularity holds.  In this section
by making use of FOSCMS for the split system  in Theorem \ref{ThSuffCondMSSplit}, we derive some new sufficient condition for MSCQ for the constraint system (\ref{Omega}).
Applying the new constraint qualification to  {the} problem in Example \ref{Ex1}, we show that in contrast to the MPCC reformulation under which even the weakest constraint qualification MPEC-GCQ fails at $(\xb, \yb, \lambda)$ for all multipliers $\lambda$, the MSCQ holds at $(\xb, \yb)$ for the original formulation.

In order to apply FOSCMS in  Theorem \ref{ThSuffCondMSSplit}, we need to calculate the linearized cone $T_\Omega^{\rm lin} (\bar z)$ and consequently we need to calculate the tangent cone $T_{{\rm gph}\widehat{N}_\Gamma}(\bar y, -\phi(\bar x, \bar y))$. We now perform this task. First we introduce some notations.
Given  vectors $y\in\Gamma$, $y^\ast\in\R^m$, consider the {\em set of multipliers}
\begin{eqnarray}\label{Lambda}
\Lambda(y,y^\ast):=\big\{\lambda \in\R^q_+\big|\;\nabla g(y)^T\lambda=y^\ast, \lambda_i=0, i\not\in \I(y)
\big\}.
\end{eqnarray}
{For a multiplier $\lambda$, the corresponding collection of {\em strict complementarity indexes} is denoted by}
\begin{eqnarray}\label{EqI+}
I^+(\lambda):=\big\{i\in \{1,\ldots,q\}\big|\;\lambda_i>0\big\}\;\mbox{ for }\;\lambda=(\lambda_1,\ldots,\lambda_q)\in\R^q_+.
\end{eqnarray}
Denote by $\E(y,y^\ast)$ the collection of all the {\em extreme points} of the closed and convex set of multipliers $\Lambda(y,y^\ast)$ and recall that $\lambda\in\Lambda(y,y^\ast)$ belongs to $\E(y,y^\ast)$ if and only if the family of gradients $\{\nabla g_i(y)\mv i\in I^+(\lambda)\}$ is linearly independent. Further $\E(y,y^\ast)\ne\emptyset$ if and only if $\Lambda(y,y^\ast)\ne\emptyset$.
To proceed further, recall the notion of the {\em critical cone} to $\Gamma$ at $(y,y^\ast)\in\Gr
\widehat{N}_\Gamma$ given by
$K(y,y^\ast):=T_\Gamma(y)\cap\{y^\ast\}^\perp$
and define the {\em multiplier set in a direction} $v\in K(y,y^\ast)$ by
\begin{eqnarray}\label{EqDir-mult}
\Lambda(y,y^\ast;v):=\argmax\big\{v^T\nabla^2(\lambda^Tg)(y)v\mv\lambda\in\Lambda(y,y^\ast)\big\}.
\end{eqnarray}
Note that $\Lambda(y,y^\ast;v)$ is the solution set of a linear optimization problem and therefore $\Lambda(y,y^\ast;v)\cap \E(y,y^\ast)\not=\emptyset$ whenever $\Lambda(y,y^\ast;v)\not=\emptyset$. Further we denote the corresponding optimal function value by
\begin{eqnarray}\label{EqDir-multval}
\theta(y,y^\ast;v):=\max\big\{v^T\nabla^2(\lambda^Tg)(y)v\mv\lambda\in\Lambda(y,y^\ast)\big\}.
\end{eqnarray}
%
The critical cone to $\Gamma$ has the following two expressions.
\begin{proposition}(see e.g. \cite[Proposition 4.3]{GfrMo15a})\label{criticalcone}
Suppose that MSCQ holds for the {system} $g(y) \in \R_-^q$ at $y$. Then  the critical cone to $\Gamma$ at $(y,y^\ast)\in\Gr\widehat N_\Gamma$ is a convex polyhedron that  can be explicitly expressed as
$$K(y,y^*)=\{v| \nabla g(y)v \in T_{\R^q_-}(g(y)), v^T y^*=0\}.$$
Moreover for any $\lambda \in \Lambda(y,y^\ast)$,
$$K(y,y^*)=\left \{v| \nabla g(y)v \left \{\begin{array}{ll}
=0 \mbox{ if } \lambda_i>0\\
\leq 0 \mbox{ if } \lambda_i=0 \end{array} \right. \right \}.$$
\end{proposition}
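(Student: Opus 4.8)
The plan is to reduce both identities to the single fact that, under MSCQ, the tangent cone to $\Gamma$ coincides with its linearization. First I would invoke the observation recorded after Proposition \ref{PropEquGrSubReg}, namely that metric subregularity of the map $g(\cdot)-\R^q_-$ at $(y,0)$ implies GACQ at $y$, so that
\[T_\Gamma(y)=T_\Gamma^{\rm lin}(y)=\{v\mv \nabla g(y)v\in T_{\R^q_-}(g(y))\}.\]
Since $\R^q_-$ is polyhedral, $T_{\R^q_-}(g(y))=\{d\mv d_i\le 0,\ i\in\I(y)\}$, so $v\in T_\Gamma(y)$ means exactly $\nabla g_i(y)v\le 0$ for every active index $i\in\I(y)$. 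The first displayed formula then follows immediately by intersecting with $\{y^\ast\}^\perp$, because $K(y,y^\ast)=T_\Gamma(y)\cap\{y^\ast\}^\perp$ by definition.

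For the second formula I would first note that $(y,y^\ast)\in\Gr\widehat N_\Gamma$ together with MSCQ guarantees, via the convex case of Proposition \ref{PropInclNormalcone}, that $y^\ast\in\widehat N_\Gamma(y)=\nabla g(y)^TN_{\R^q_-}(g(y))$; hence $\Lambda(y,y^\ast)\ne\emptyset$ and the set in the second expression is meaningful for any chosen $\lambda\in\Lambda(y,y^\ast)$. Fixing such a $\lambda$, the key computation is to expand, for $v\in T_\Gamma(y)$,
\[v^T y^\ast=v^T\nabla g(y)^T\lambda=\sum_{i\in\I(y)}\lambda_i\,\nabla g_i(y)v,\]
using $\lambda_i=0$ for $i\notin\I(y)$. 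Each summand is the product of $\lambda_i\ge 0$ and $\nabla g_i(y)v\le 0$, hence nonpositive, so the constraint $v^Ty^\ast=0$ is equivalent to the vanishing of every summand. For indices with $\lambda_i>0$ this forces $\nabla g_i(y)v=0$, while for active indices with $\lambda_i=0$ no condition beyond $\nabla g_i(y)v\le 0$ is produced, and inactive indices impose nothing. Collecting these conditions yields exactly the complementarity description in the second display.

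I expect the only delicate point to be the bookkeeping of index sets: making clear that the ``$\le 0$'' alternative is imposed only on active indices $i\in\I(y)$ (the inactive ones contribute no constraint to $T_\Gamma(y)$, and $\lambda_i>0$ automatically forces $i\in\I(y)$). Everything else is a routine sign argument once the tangent–linearization identity is in hand; that identity, together with the nonemptiness of $\Lambda(y,y^\ast)$, are the two places where the MSCQ hypothesis is genuinely used.
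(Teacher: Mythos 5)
Your proof is correct. Note that the paper itself gives no proof of this proposition---it is quoted from \cite[Proposition 4.3]{GfrMo15a}---so there is no internal argument to compare against; your reasoning is, however, exactly the natural one and matches how the cited result is established. The two pillars are precisely the ones you identify: MSCQ implies GACQ (the remark preceding Definition \ref{DefMetrSubregCQ}, via \cite[Proposition 1]{HenOut05}), hence $T_\Gamma(y)=T_\Gamma^{\rm lin}(y)=\{v\mv \nabla g(y)v\in T_{\R^q_-}(g(y))\}$, which yields the first formula and the polyhedrality of $K(y,y^\ast)$ after intersecting with $\{y^\ast\}^\perp$; and the sign argument $0=v^Ty^\ast=\sum_{i\in\I(y)}\lambda_i\,\nabla g_i(y)v$, a sum of nonpositive terms, which forces $\nabla g_i(y)v=0$ exactly on $I^+(\lambda)$ and gives the second formula. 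Two minor observations: (i) the nonemptiness of $\Lambda(y,y^\ast)$, which you obtain from Proposition \ref{PropInclNormalcone}, is not actually needed---the ``moreover'' statement is asserted for any $\lambda\in\Lambda(y,y^\ast)$ and would be vacuous if that set were empty---though invoking it is harmless and clarifies that the description is nontrivial; (ii) your bookkeeping remark is the correct reading of the display: the ``$\leq 0$'' branch must be understood as applying only to active indices $i\in\I(y)$ with $\lambda_i=0$ (indices with $\lambda_i>0$ are automatically active since $\lambda\in\Lambda(y,y^\ast)$), and your proof establishes the formula under exactly that interpretation, which is the one intended in \cite{GfrMo15a}.
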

Based on the expression for the critical cone, it is easy to see that the normal cone to the critical cone has the following expression.
\begin{lemma}\cite[Lemma 1]{{GfrOut14}} \label{lemma1} Assume MSCQ holds at $y$ for
the system $g(y)\in\R^q_-$. Let $v\in K(y,y^*), \lambda \in \Lambda(y, y^*)$.  Then
$${N}_{K(y,y^*)}(v)=\{\nabla g(y)^T\mu|\mu^T\nabla g(y)v=0, \mu\in %
T_{N_{\R_-^q}(g(y))}(\lambda)\}.$$
\end{lemma}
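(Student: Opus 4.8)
The plan is to exploit the polyhedrality of the critical cone and reduce the statement to a direct computation of the normal cone to a polyhedral cone, matched against an explicit description of the tangent cone $T_{N_{\R^q_-}(g(y))}(\lambda)$. Since MSCQ holds for $g(y)\in\R^q_-$ at $y$, Proposition \ref{criticalcone} furnishes, for the fixed $\lambda\in\Lambda(y,y^*)$, the representation
$$K(y,y^*)=\{v\mv \nabla g_i(y)v=0,\ i\in I^+(\lambda);\ \nabla g_i(y)v\leq 0,\ i\in\I(y)\setminus I^+(\lambda)\},$$
where the inactive indices $i\notin\I(y)$ contribute no constraint because $\lambda_i=0$ there. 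This exhibits $K(y,y^*)$ as a convex polyhedral cone.

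First I would apply the standard formula for the normal cone to a polyhedral convex set (see, e.g., \cite[Theorem 6.46]{RoWe98}); no constraint qualification is needed here since the defining constraints are linear in $v$. Setting $J_0(v):=\{i\in\I(y)\setminus I^+(\lambda)\mv \nabla g_i(y)v=0\}$, this yields
$${N}_{K(y,y^*)}(v)=\Big\{\nabla g(y)^T\mu\ \Big|\ \mu_i\in\R,\ i\in I^+(\lambda);\ \mu_i\geq 0,\ i\in J_0(v);\ \mu_i=0,\ i\notin I^+(\lambda)\cup J_0(v)\Big\}.$$
It then remains only to identify the displayed index conditions on $\mu$ with membership in the set $\{\mu\in T_{N_{\R^q_-}(g(y))}(\lambda)\mv \mu^T\nabla g(y)v=0\}$.

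Next I would compute $T_{N_{\R^q_-}(g(y))}(\lambda)$ explicitly. Because $N_{\R^q_-}(g(y))=\{d\in\R^q_+\mv d_i=0,\ i\notin\I(y)\}$ is itself a polyhedral cone, its tangent cone at $\lambda$ is obtained coordinatewise:
$$T_{N_{\R^q_-}(g(y))}(\lambda)=\{\mu\mv \mu_i\in\R,\ i\in I^+(\lambda);\ \mu_i\geq 0,\ i\in\I(y)\setminus I^+(\lambda);\ \mu_i=0,\ i\notin\I(y)\}.$$
For such $\mu$ one has $\mu_i\nabla g_i(y)v=0$ on $I^+(\lambda)$ (the equality constraints of $K$) and $\mu_i\nabla g_i(y)v\leq 0$ on $\I(y)\setminus I^+(\lambda)$ (since $\mu_i\geq 0$ and $\nabla g_i(y)v\leq 0$), while the remaining terms vanish. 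Hence $\mu^T\nabla g(y)v\leq 0$, with equality precisely when $\mu_i\nabla g_i(y)v=0$ for every $i\in\I(y)\setminus I^+(\lambda)$, that is, $\mu_i=0$ whenever $\nabla g_i(y)v<0$ and $\mu_i\geq 0$ whenever $\nabla g_i(y)v=0$. This is exactly the requirement $\mu_i\geq 0$ on $J_0(v)$ and $\mu_i=0$ off $I^+(\lambda)\cup J_0(v)$ appearing in the normal-cone formula, so the two descriptions of $\mu$ coincide and the claimed equality follows.

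The main obstacle is the bookkeeping in this last step: one must use the complementarity relation $\mu^T\nabla g(y)v=0$ to split $\I(y)\setminus I^+(\lambda)$ into the active part $J_0(v)$, where $\mu_i$ stays free to be nonnegative, and the strictly inactive part, where $\mu_i$ is forced to zero, and then verify that this matches the support and sign pattern produced by the polyhedral normal-cone formula. A secondary point worth recording is that ${N}_{K(y,y^*)}(v)$ is intrinsically independent of the representing multiplier, so although the right-hand side is written in terms of the chosen $\lambda$, the identity in fact holds for every $\lambda\in\Lambda(y,y^*)$.
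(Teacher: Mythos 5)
Your proof is correct and takes essentially the route the paper intends: the paper states this lemma without proof, citing \cite[Lemma 1]{GfrOut14} and remarking that it follows from the critical-cone expression in Proposition \ref{criticalcone}, which is precisely the polyhedral computation you carry out (normal cone to a polyhedral cone via active constraint gradients, matched against the coordinatewise description of $T_{N_{\R^q_-}(g(y))}(\lambda)$). Your bookkeeping — equality multipliers free on $I^+(\lambda)$, nonnegative multipliers on the active inactive-part $J_0(v)$, zero elsewhere, with the complementarity condition $\mu^T\nabla g(y)v=0$ effecting exactly this split — is the substance of the omitted verification and is done correctly.
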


We are now ready to  calculate the tangent cone to the graph of $\widehat N_\Gamma$. This result will be needed in the sufficient condition for MSCQ and it is also of an independent interest.
The  first equation in the formula \eqref{EqTanConeGrNormalCone} was first shown in \cite[Theorem~1]{GfrOut14} under the extra assumption that the metric regularity holds locally uniformly except for $\yb$, whereas in \cite{ChiHi16} this extra assumption was removed.
\begin{theorem}\label{ThTanConeGrNormalCone}
Given $\yb\in\Gamma$, assume that MSCQ holds  at $\yb$ for
the system $g(y)\in\R^q_-$. Then  there is a real $\kappa>0$ and a neighorhood $V$ of $\bar y$ such that
for any $y\in\Gamma\cap V$  and any  $y^\ast\in\widehat N_\Gamma(y)$
 the tangent cone to the graph of $\widehat N_\Gamma$ at $(y, y^*)$ can be calculated by
\begin{eqnarray}\label{EqTanConeGrNormalCone}
\lefteqn{T_{\Gr \widehat N_\Gamma}(y,y^\ast)}\\
\nonumber
&=&\big\{(v,v^\ast)\in\R^{2m}\big|\;\exists\,\lambda\in\Lambda(y,y^\ast;v)\;\mbox{ with }\;
v^\ast\in\nabla^2(\lambda^Tg)(y)v+N_{K(y,y^\ast)}(v)\big\}\\
\nonumber&=&\big\{(v,v^\ast)\in\R^{2m}\big|\;\exists\,\lambda\in\Lambda(y,y^\ast;v)\cap \kappa\norm{y^\ast} \B_{\R^q}\;\mbox{ with }\;
v^\ast\in\nabla^2(\lambda^Tg)(y)v+N_{K(y,y^\ast)}(v)\big\},
\end{eqnarray}
where the critical cone $K(y,y^\ast)$ and the normal cone $N_{K(y,y^\ast)}(v)$ can be calculated as in Proposition \ref{criticalcone} and Lemma  \ref{lemma1} respectively, and
  the set  $\Gr \widehat N_\Gamma$ is geometrically derivable at $(y, y^*)$.
\end{theorem}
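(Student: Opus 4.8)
Write $\mathcal R_u$ and $\mathcal R_b$ for the unbounded and the bounded right-hand sets in \eqref{EqTanConeGrNormalCone}. My plan is to prove everything at once through the chain
\[T_{\Gr\widehat N_\Gamma}(y,y^\ast)\subseteq\mathcal R_b\subseteq\mathcal R_u\subseteq T_{\Gr\widehat N_\Gamma}(y,y^\ast),\]
whose middle inclusion is trivial (it only drops the ball constraint on $\lambda$), so that the three sets coincide as soon as the necessity inclusion into $\mathcal R_b$ and the sufficiency inclusion out of $\mathcal R_u$ are in place; carrying the latter out with honest arcs rather than sequences will simultaneously deliver geometric derivability. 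The uniform constants are fixed first: since MSCQ holds at $\yb$ for $g(y)\in\R^q_-$, Proposition \ref{PropInclNormalcone} yields $\widehat N_\Gamma(y)=N_\Gamma(y)=\nabla g(y)^TN_{\R^q_-}(g(y))$ for $y\in\Gamma$ near $\yb$, and Theorem \ref{ThBMP} supplies one modulus $\kappa$ and one neighborhood $V$ of $\yb$ such that every $y^\ast\in\widehat N_\Gamma(y)$, $y\in\Gamma\cap V$, admits a representation $y^\ast=\nabla g(y)^T\lambda$ with $\lambda\in\kappa\norm{y^\ast}\B_{\R^q}\cap N_{\R^q_-}(g(y))$. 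These are exactly the $\kappa$ and $V$ claimed.

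For necessity I would take $(v,v^\ast)\in T_{\Gr\widehat N_\Gamma}(y,y^\ast)$, choose $t_k\downarrow0$ and $(v_k,v^\ast_k)\to(v,v^\ast)$ with $y_k:=y+t_kv_k\in\Gamma$ and $y^\ast_k:=y^\ast+t_kv^\ast_k\in\widehat N_\Gamma(y_k)$, and invoke the bounded selection above to get $\lambda_k\in\kappa\norm{y^\ast_k}\B_{\R^q}\cap N_{\R^q_-}(g(y_k))$ with $y^\ast_k=\nabla g(y_k)^T\lambda_k$. Boundedness of $(\lambda_k)$ gives, along a subsequence, a limit $\lambda\in\kappa\norm{y^\ast}\B_{\R^q}\cap\Lambda(y,y^\ast)$. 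A first-order expansion of $g(y_k)\le0$ together with the complementarity $\lambda_k\perp g(y_k)$ forces $v\in K(y,y^\ast)$. The decisive second-order step is to expand, for an arbitrary competitor $\lambda'\in\Lambda(y,y^\ast)$, the inequality $\lambda'^Tg(y_k)\le0$: using $\lambda'^Tg(y)=0$ and $\lambda'^T\nabla g(y)v=v^Ty^\ast=0$ it reduces to $v^T\nabla^2(\lambda'^Tg)(y)v\le0$, and the same expansion applied to the exact relation $\lambda_k^Tg(y_k)=0$ shows that the limiting $\lambda$ attains this maximum, i.e.\ $\lambda\in\Lambda(y,y^\ast;v)$. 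A limit of the quotients $(\lambda_k-\lambda)/t_k$ then produces $\mu$ with $v^\ast=\nabla^2(\lambda^Tg)(y)v+\nabla g(y)^T\mu$, and verifying $\mu\in T_{N_{\R^q_-}(g(y))}(\lambda)$ and $\mu^T\nabla g(y)v=0$ places $\nabla g(y)^T\mu$ in $N_{K(y,y^\ast)}(v)$ by Lemma \ref{lemma1}, so $(v,v^\ast)\in\mathcal R_b$.

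For sufficiency and derivability I would take $(v,v^\ast)\in\mathcal R_u$, witnessed by some $\lambda\in\Lambda(y,y^\ast;v)$ and $v^\ast=\nabla^2(\lambda^Tg)(y)v+\nabla g(y)^T\mu$ with $\mu$ as in Lemma \ref{lemma1}, and build an arc $t\mapsto(y(t),y^\ast(t))\in\Gr\widehat N_\Gamma$ with right derivative $(v,v^\ast)$. Reducing to an extreme maximizer $\lambda\in\Lambda(y,y^\ast;v)\cap\E(y,y^\ast)$ makes $\{\nabla g_i(y)\mv i\in I^+(\lambda)\}$ linearly independent, so I can pick a second-order correction $w$ solving the linear system that enforces $g_i(y+tv+\tfrac{t^2}2w)=o(t^2)$ on $I^+(\lambda)$; with $\lambda(t):=\lambda+t\mu$ and $y^\ast(t):=\nabla g(y(t))^T\lambda(t)$ one checks that $(y(t),y^\ast(t))$ stays feasible and has the prescribed derivative. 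The feasibility check splits over the active indices: on strict directions $\nabla g_i(y)v<0$ the relation $\mu^T\nabla g(y)v=0$ forces $\mu_i=0$ and hence $\lambda_i(t)=0$, while on the biactive critical indices the feasibility $g_i(y(t))\le0$ is retained precisely because $\lambda$ realizes the maximum in \eqref{EqDir-multval}; producing a genuine arc (an actual, not merely lower, limit of difference quotients) is what yields derivability of $\Gr\widehat N_\Gamma$ at $(y,y^\ast)$.

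I expect the main obstacle to lie exactly in the two matching second-order arguments: in necessity, controlling the rate of $\lambda_k\to\lambda$ so that $(\lambda_k-\lambda)/t_k$ has a subsequential limit $\mu$ with the required cone membership, and in sufficiency, arranging the correction $w$ so that every biactive constraint stays feasible to second order. Both are governed by the directional (argmax) multiplier condition $\lambda\in\Lambda(y,y^\ast;v)$, and it is here that the argument essentially re-derives the first equality of \eqref{EqTanConeGrNormalCone} known from \cite{GfrOut14,ChiHi16}, now upgraded to the bounded-multiplier form and to geometric derivability.
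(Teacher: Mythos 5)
Your overall architecture is the same as the paper's: the chain $T_{\Gr\widehat N_\Gamma}(y,y^\ast)\subseteq\mathcal{R}_b\subseteq\mathcal{R}_u\subseteq T_{\Gr\widehat N_\Gamma}(y,y^\ast)$, with Theorem \ref{ThBMP} supplying the bounded multiplier selection and Lemma \ref{lemma1} converting $\nabla g(y)^T\mu$ into an element of $N_{K(y,y^\ast)}(v)$; for the sufficiency inclusion the paper does not reprove anything but simply invokes the proof of the first part of \cite[Theorem~1]{GfrOut14} (which also delivers derivability, since that inclusion factors through the derivable cone), so your arc-construction sketch is a plausible reconstruction of that cited argument. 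The genuine gap is in your necessity half, and it sits exactly where you flag ``the main obstacle'': you propose to obtain $\mu$ as ``a limit of the quotients $(\lambda_k-\lambda)/t_k$,'' where $\lambda$ is a subsequential limit of the multipliers $\lambda_k\in\Lambda(y_k,y_k^\ast)$. Nothing in your construction makes these quotients bounded: $\lambda_k$ lives in the \emph{perturbed} polyhedron $\Lambda(y_k,y_k^\ast)$, its distance to the limit $\lambda$ is in no way controlled by $t_k$, and acknowledging the obstacle does not remove it. Without a subsequential limit $\mu$ you get no decomposition $v^\ast\in\nabla^2(\lambda^Tg)(y)v+N_{K(y,y^\ast)}(v)$ at all, so the inclusion $T_{\Gr\widehat N_\Gamma}(y,y^\ast)\subseteq\mathcal{R}_b$ is not established.

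The paper's missing ingredient is Hoffman's error bound applied to the \emph{fixed} polyhedron $\Psi_{\widetilde\I}(y^\ast)=\{\lambda\mid\nabla g(y)^T\lambda=y^\ast,\ \lambda_i\geq0,\ i\in\widetilde\I,\ \lambda_i=0,\ i\notin\widetilde\I\}$, where $\widetilde\I=\I(y_k)$ along a subsequence: since $\nabla g(y)^T\lambda^k-y^\ast=t_kv_k^\ast+(\nabla g(y)-\nabla g(y_k))^T\lambda^k=O(t_k)$, Hoffman's lemma produces $\widetilde\lambda^k\in\Psi_{\widetilde\I}(y^\ast)\subset\Lambda(y,y^\ast)$ with $\|\lambda^k-\widetilde\lambda^k\|=O(t_k)$, and one sets $\mu^k:=(\lambda^k-\widetilde\lambda^k)/t_k$, which \emph{is} bounded; $\lambda$ is then recovered as the common limit of $\lambda^k$ and $\widetilde\lambda^k$. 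Even granting this, a second point you dismiss as routine (``verifying $\mu\in T_{N_{\R^q_-}(g(y))}(\lambda)$'') can actually fail: for indices with $\lambda_i=0$ both $\lambda_i^k$ and $\widetilde\lambda_i^k$ tend to zero with no sign control on their difference, so $\mu_i<0$ is possible. The paper devotes the final part of its proof to repairing exactly this, replacing $\mu$ by $\widetilde\mu:=\mu+2(\widetilde\lambda^{\bar k}-\lambda)/t_{\bar k}$ for a suitably chosen $\bar k$, which lies in $T_{N_{\R^q_-}(g(y))}(\lambda)$ and satisfies $\nabla g(y)^T\widetilde\mu=\nabla g(y)^T\mu$ because $\lambda,\widetilde\lambda^{\bar k}\in\Lambda(y,y^\ast)$, so Lemma \ref{lemma1} still applies. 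Without these two devices — the Hoffman-type rate control and the tangent-cone repair — your necessity argument does not go through; a similar (smaller) amount of care is also needed in your sufficiency sketch, where feasibility of the biactive constraints along the arc requires the LP-duality consequence of $\lambda\in\Lambda(y,y^\ast;v)$, not just the argmax property itself.
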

\begin{proof} Since MSCQ holds  at $\yb$ for
the system $g(y)\in\R^q_-$,  we can find an open neighborhood $V$ of $\yb$ and and a real $\kappa>0$ such that
\begin{equation}
\dist{y,\Gamma}\leq \kappa\dist{g(y),\R^q_-}\ \forall y\in V, \label{errorb} \end{equation}
which means that  MSCQ holds at every $y\in\Gamma\cap V$.  Therefore $K(y,y^\ast)$ and  and
$N_{K(y,y^\ast)}(v)$ can be calculated as in Proposition \ref{criticalcone} and Lemma  \ref{lemma1} respectively. By the proof of the first part of \cite[Theorem~1]{GfrOut14} we obtain that for every $y^\ast\in\widehat N_\Gamma(y)$,
\begin{eqnarray*}
\nonumber\lefteqn{\big\{(v,v^\ast)\in\R^{2m}\big|\;\exists\,\lambda\in\Lambda(y,y^\ast;v)\cap \kappa\norm{y^\ast}	 \B_{\R^q}\;\mbox{ with }\;
v^\ast\in\nabla^2(\lambda^Tg)(y)v+ N_{K(y,y^\ast)}(v)\big\}}\\
\label{EqInclAux1}&\subset &\big\{(v,v^\ast)\in\R^{2m}\big|\;\exists\,\lambda\in\Lambda(y,y^\ast;v)\;\mbox{ with }\;
v^\ast\in\nabla^2(\lambda^Tg)(y)v+ N_{K(y,y^\ast)}(v)\big\}\qquad\\
\label{EqInclAux2}& \subset & \big\{(v,v^\ast)\in\R^{2m}\big| \lim_{t\downarrow 0}t^{-1}\distb{(y+tv,y^\ast+tv^\ast), \Gr \widehat N_\Gamma}=0\}\\
\label{EqInclAux3}&\subset & T_{\Gr \widehat N_\Gamma}(y,y^\ast).
\end{eqnarray*}

We now show  the reversed inclusion
\begin{eqnarray}
\lefteqn{T_{\Gr \widehat N_\Gamma}(y,y^\ast)}\label{reverse}\\
&\subset&\big\{(v,v^\ast)\in\R^{2m}\big|\;\exists\,\lambda\in\Lambda(y,y^\ast;v)\cap \kappa\norm{y^\ast} \B_{\R^q}\;\mbox{ with }\;
v^\ast\in\nabla^2(\lambda^Tg)(y)v+N_{K(y,y^\ast)}(v)\big\}.\nonumber
\end{eqnarray}
 Although the proof technique is essentially the same as \cite[Theorem~1]{GfrOut14}, for completeness we provide the detailed proof.
%
Consider $y\in \Gamma\cap V$, $y^\ast\in  \widehat N_\Gamma(y)$ and
 let $(v,v^*)\in T_{\Gr \widehat N_\Gamma}(y,y^\ast)$. Then by definition of the tangent cone, there exist sequences $t_k\downarrow 0, v_k\rightarrow v, v_k^* \rightarrow v^*$ such that
 $ y_k^*:={y^*}+t_kv_k^* \in \widehat N_\Gamma(y_k)$, where $y_k:=y+t_kv_k$. By passing to a subsequence if necessary we can assume that $y_k\in V$ $\forall k$ and that there is some index set $\widetilde{\I}\subset \I(y)$ such that $\I(y_k)=\widetilde{\I}$ hold for all $k$. For every $i\in {\I(y)}$ we have
 \begin{equation}\label{tylor}
 g_i(y_k)=g_i(y)+t_k\nabla g_i(y)v_k +o(t_k)=t_k\nabla g_i(y)v_k +o(t_k)\left \{\begin{array}{ll}
 =0 &\mbox{ if } i\in \widetilde{\I},\\
 \leq 0 & \mbox{ if } i\in \I(y) \setminus  \widetilde{\I}.\end{array}
\right.
\end{equation}
 Dividing by $t_k$ and passing to the limit we obtain
 \begin{equation}\label{(21)}
 \nabla g_i(y)v \left \{\begin{array}{ll}
 =0 &\mbox{ if } i\in \widetilde{\I},\\
 \leq 0 & \mbox{ if } i\in \I(y) \setminus  \widetilde{\I},\end{array}
\right.
 \end{equation}
 which means  $v\in T_\Gamma^{\rm lin}(y)$.  Since MSCQ holds at every $y\in\Gamma\cap V$, we have that the GACQ holds at $y$ as well and hence $v\in T_\Gamma(y)$.

{Since (\ref{errorb}) holds and $y_k\in V$, $y_k^*\in \widehat N_\Gamma(y_k)=N_\Gamma(y_k)$,} by Theorem \ref{ThBMP}
 there exists a sequence of multipliers $\lambda^k\in\Lambda(y_k,y_k^\ast)\cap\kappa\norm{y_k^\ast} \B_{\R^q}$ as $k\in\N$. Consequently we {assume} that there exists $c_1\geq 0$ such that $\|\lambda^k\|\leq c_1$ for all $k$.
 Let
 \begin{equation}\label{Psi}
 \Psi_{\widetilde{\I}}(y^*):=\{\lambda\in \R^q|\nabla g(y)^T\lambda =y^*, \lambda_i \geq 0, i \in \widetilde{\I}, \lambda_i=0, i \not \in \widetilde{\I}\}.
 \end{equation}
 By Hoffman's Lemma there is some constant $\beta$ such that for  every $y^*\in \R^m$ with
 $\Psi_{\widetilde{\I}}(y^*)\not =\emptyset$ one has
 \begin{equation}\dist{\lambda,  \Psi_{\widetilde{\I}}(y^*)}\leq \beta(\|\nabla g(y)^T\lambda -y^*\|+\sum_{i\in \widetilde{\I}}{\max \{-\lambda_i}, 0 \}+\sum_{i\not \in  \widetilde{\I}}|\lambda_i|) \quad \forall \lambda\in \R^q.\label{hoffman}
 \end{equation}
 Since
 $$\nabla g(y)^T\lambda^k-y^*=t_kv_k^*+(\nabla g(y)-\nabla g(y_k))^T \lambda^k$$
 and $\|\nabla g(y)-\nabla g(y_k)\|\leq c_2\|y_k-y\|=c_2 t_k \|v_k\|$ for some $c_2\geq 0$, by (\ref{hoffman}) we can find for each $k$ some
 $\widetilde{\lambda}^k \in \Psi_{\widetilde{\I}}(y^*)\subset \Lambda (y,y^*)$ with
 $\| \widetilde{\lambda}^k-\lambda^k\| \leq \beta t_k(\|v_k^*{\|}+c_1c_2\|v_k\|)$.
 Taking $\mu^k:=(\lambda^k-\widetilde{\lambda}^k)/t_k$ we have that $(\mu^k)$ is uniformly bounded. By passing to subsequence if necessary we assume
that $(\lambda^k)$ and $(\mu^k)$ are convergent to some $\lambda\in  \Lambda(y,y^*)\cap \kappa \norm{y^\ast} \B_{\R^q}$,
 and some $\mu$ respectively. Obviously the sequence $(\tilde \lambda^k)$ converges to $\lambda$ as well.
 Since $\lambda_i^k=\widetilde{\lambda}^k_i=0, i \not \in   \widetilde{\I}$, {by virtue of (\ref{(21)})} we have ${\mu^k}^T \nabla g(y) v=0 \ \forall k$ implying
 \begin{equation} \label{mu}
 \mu\in (\nabla g(y) v)^\perp .
 \end{equation}
 Taking into account
{${\lambda^k}^T g(y_k)=0$} and (\ref{tylor}), we obtain
 $$0=\lim_{k\rightarrow \infty} \frac{{{\lambda^k}^T }g(y_k)}{t_k}=\lim_{k\rightarrow \infty}{{\lambda^k}^T} \nabla g(y)v_k ={y^*}^Tv. $$ Therefore combining the above with $v\in T_\Gamma(y)$ we have
  \begin{equation}\label{criticalc}
  v\in K(y, y^*).
  \end{equation}

 Further we have for all $\lambda' \in  \Lambda(y,y^\ast)$, since $\widetilde{\lambda}^k\in \Lambda(y,y^\ast)$,
 \begin{eqnarray*}
 0 & \leq & ({\widetilde{\lambda}^k}-\lambda')^T g(y_k) =({\widetilde{\lambda}^k}-\lambda')^T( g(y)+t_k\nabla g(y) v_k+\frac{1}{2} t_k^2 v_k^T \nabla^2 g(y) v_k +o(t_k^2))\\
 &=& ({\widetilde{\lambda}^k}-\lambda')^T (\frac{1}{2} t_k^2 v_k^T \nabla^2 g(y) v_k +o(t_k^2)).
 \end{eqnarray*}
 Dividing by $t_k^2$ and passing to the limit we obtain
 $(\lambda-\lambda')^T v^T \nabla^2 g(y) v\geq 0 \quad \forall \lambda' \in \Lambda(y,y^\ast)$
 and hence $\lambda\in \Lambda(y,y^*;v)$.

 Since $$y_k^*=\nabla g(y)^T \widetilde \lambda^k+t_k v_k^*=\nabla g(y_k)^T \lambda^k,$$
 we obtain
 \begin{eqnarray*}
 v^*&=& \lim_{k\rightarrow \infty} v_k^* =\lim_{k\rightarrow \infty}\frac{\nabla g(y_k)^T \lambda^k-\nabla g(y)^T \widetilde \lambda^k}{t_k}\\
 &=&\lim_{k\rightarrow \infty}\frac{(\nabla g(y_k)-\nabla g(y))^T {\lambda}^k+\nabla g({y})^T (\lambda^k-\widetilde \lambda^k)}{t_k}\\
 &=& \nabla^2 ({{\lambda}}^Tg)(y)v+\nabla g(y)^T \mu.
 \end{eqnarray*}
 If $\mu \in T_{{N}_{\R_-^q}(g(y))}(\lambda)$,
  since (\ref{mu}) holds,  by using Lemma \ref{lemma1} we have $\nabla g(y)^T \mu\in {{N}}_{K(y,y^*)}(v)$ and hence the inclusion (\ref{reverse})
 is proved.
 Otherwise, by taking into account
\[T_{{N}_{\R_-^q}(g(y))}(\lambda)=\{\mu\in \mathbb{R} ^q\mv \mu_i\geq 0 \mbox{ if }\lambda_i=0\}\] and $\mu_i=0$, $i\not\in \widetilde I$,
 the set $J:=\{ i\in \widetilde\I \mv \lambda_i=0, \mu_i <0\}$ is not empty.
Since $\mu^k$ converges to $\mu$, we can choose some index $\bar{k}$ such that
 $\mu^{\bar{k}}_i =(\lambda_i^{\bar{k}}-\widetilde\lambda_i^{\bar{k}})/ t_{\bar{k}}\leq \mu_i/2 \ \forall i \in J$. Set $\widetilde{\mu}:=\mu+2(
\widetilde \lambda^{\bar{k}}-\lambda)/t_{\bar{k}}$.
 Then for all $i$ with $\lambda_i=0$ we have $\widetilde{\mu}_i\geq \mu_i$ and for all $i\in J$ we have
 $$ \widetilde{\mu}_i=\mu_i +2(
 \widetilde\lambda_i^{\bar{k}}-\lambda_i)/t_{\bar{k}}\geq \mu_i +2(
 \widetilde\lambda_i^{\bar{k}}-\tilde\lambda_i^{\bar k})/t_{\bar{k}}\geq 0$$
 and therefore $\widetilde{\mu} \in T_{N_{\R_-^q(g(y))}} (\lambda)$.
Observing that $\nabla g(y)^T\widetilde{\mu}=\nabla g(y)^T{\mu}$ because of $\lambda, \widetilde\lambda^{\bar k}\in \Lambda (y, y^*)$ and taking into account Lemma  \ref{lemma1} we have $\nabla g(y)^T \widetilde{\mu}\in {N}_{K(y,y^*)}(v)$ and hence the inclusion (\ref{reverse})
 is proved. This finishes the proof of the theorem.

\end{proof}

Since the regular normal cone is the polar of the tangent cone, the following characterization of the regular normal cone of $\Gr\widehat N_\Gamma$ follows from the formula for the tangent cone in Theorem \ref{ThTanConeGrNormalCone}.
\begin{corollary}\label{CorSecOrd} Assume that MSCQ is satisfied for the system $g(y)\leq0$ at $\yb\in\Gamma$. Then there is a neighborhood $V$ of $\yb$ such that for every $(y,y^\ast)\in\Gr\widehat N_\Gamma$ with $y\in V$ the following assertion holds: given any pair $(w^\ast,w)\in \widehat N_{\Gr\widehat N_\Gamma}(y,y^\ast)$ we have $w\in K(y,y^\ast)$ and
\begin{equation}\label{EqBasicIneqTiltStab}
\skalp{w^\ast,w}+ w^T\nabla^2(\lambda^Tg)(y)w \leq 0\;\mbox{ whenever }\;\lambda\in\Lambda(y,y^\ast;w).
\end{equation}
\end{corollary}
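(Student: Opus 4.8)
The plan is to read the statement directly off the tangent cone formula of Theorem \ref{ThTanConeGrNormalCone} combined with the general duality $\widehat N_\Omega(\zb)=(T_\Omega(\zb))^\circ$ recorded just after the definition of the regular normal cone. I take $V$ to be exactly the neighborhood furnished by Theorem \ref{ThTanConeGrNormalCone}, so that for every $(y,y^\ast)\in\Gr\widehat N_\Gamma$ with $y\in V$ the cone $T_{\Gr\widehat N_\Gamma}(y,y^\ast)$ is given by \eqref{EqTanConeGrNormalCone}. Fixing such a pair and a regular normal $(w^\ast,w)\in\widehat N_{\Gr\widehat N_\Gamma}(y,y^\ast)$, the duality yields the master inequality
\[\skalp{w^\ast,v}+\skalp{w,v^\ast}\le 0\qquad\forall\,(v,v^\ast)\in T_{\Gr\widehat N_\Gamma}(y,y^\ast),\]
and the whole proof consists in substituting two well-chosen families of tangents into this inequality.

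For the membership $w\in K(y,y^\ast)$ I would use the tangents whose first component vanishes. Taking $v=0$ gives $\Lambda(y,y^\ast;0)=\Lambda(y,y^\ast)\ne\emptyset$ (the latter because $y^\ast\in\widehat N_\Gamma(y)$) and $N_{K(y,y^\ast)}(0)=(K(y,y^\ast))^\circ$, so \eqref{EqTanConeGrNormalCone} shows that $(0,v^\ast)$ is tangent for every $v^\ast\in(K(y,y^\ast))^\circ$. Feeding these into the displayed inequality leaves $\skalp{w,v^\ast}\le 0$ for all $v^\ast\in(K(y,y^\ast))^\circ$, i.e.\ $w$ lies in the bipolar of the critical cone; since $K(y,y^\ast)$ is a closed convex (indeed polyhedral) cone by Proposition \ref{criticalcone}, this bipolar equals $K(y,y^\ast)$ itself.

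For inequality \eqref{EqBasicIneqTiltStab} I then fix an arbitrary $\lambda\in\Lambda(y,y^\ast;w)$. Because $w\in K(y,y^\ast)$ by the previous step and $0\in N_{K(y,y^\ast)}(w)$, formula \eqref{EqTanConeGrNormalCone} guarantees that the single tangent vector $(w,\nabla^2(\lambda^Tg)(y)w)$ belongs to $T_{\Gr\widehat N_\Gamma}(y,y^\ast)$. Substituting it into the displayed inequality gives $\skalp{w^\ast,w}+\skalp{w,\nabla^2(\lambda^Tg)(y)w}\le 0$, and the symmetry of the Hessian $\nabla^2(\lambda^Tg)(y)$ turns the second term into $w^T\nabla^2(\lambda^Tg)(y)w$, which is precisely \eqref{EqBasicIneqTiltStab}.

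The main point to watch is not a genuine obstacle but a bookkeeping matter: one must keep the pairing convention straight, so that the normal component $w^\ast$ is dual to the primal direction $v$ while $w$ is dual to the derivative direction $v^\ast$; with that fixed, both conclusions fall out from the two substitutions above. The only auxiliary fact invoked is the bipolar identity $(K^\circ)^\circ=K$, legitimate thanks to the polyhedrality of $K(y,y^\ast)$ from Proposition \ref{criticalcone}. All the analytic effort has already been absorbed into Theorem \ref{ThTanConeGrNormalCone}, so the corollary is essentially a polarity exercise.
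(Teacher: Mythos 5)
Your proposal is correct and follows essentially the same route as the paper's own proof: polarity $\widehat N_{\Gr\widehat N_\Gamma}(y,y^\ast)=\big(T_{\Gr\widehat N_\Gamma}(y,y^\ast)\big)^\circ$, then testing against the tangents $\{0\}\times N_{K(y,y^\ast)}(0)$ to get $w\in K(y,y^\ast)$ via the (bi)polar argument, and finally against $(w,\nabla^2(\lambda^Tg)(y)w)$ for $\lambda\in\Lambda(y,y^\ast;w)$ to obtain \eqref{EqBasicIneqTiltStab}. Your justification that $(0,v^\ast)$ is tangent for every $v^\ast\in K(y,y^\ast)^\circ$ (via $\Lambda(y,y^\ast;0)=\Lambda(y,y^\ast)\neq\emptyset$) is in fact slightly more explicit than the paper's, which simply asserts the inclusion.
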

\begin{proof}
Choose $V$ such that \eqref{EqTanConeGrNormalCone} holds true for every $y\in \Gamma\cap V$ and consider any $(y,y^\ast)\in\Gr\widehat N_\Gamma$ with $y\in V$ and $(w^\ast,w)\in \widehat N_{\Gr\widehat N_\Gamma}(y,y^\ast)$. By the definition of the regular normal cone we have $\widehat N_{\Gr\widehat N_\Gamma}(y,y^\ast)=\big( T_{\Gr\widehat N_\Gamma}(y,y^\ast)\big)^\circ$ and, since $\{0\}\times N_{K(y,y^\ast)}(0)\subset T_{\Gr\widehat N_\Gamma}(y,y^\ast)$, we obtain
\[\skalp{w^\ast,0}+\skalp{w,v^\ast}\leq 0 \ \forall v^\ast \in N_{K(y,y^\ast)}(0)=K(y,y^\ast)^\circ,\]
implying $w\in \cl\co K(y,y^\ast)=K(y,y^\ast)$. By \eqref{EqTanConeGrNormalCone} we have  $(w, \nabla^2(\lambda^Tg)(y)w)\in T_{\Gr\widehat N_\Gamma}(y,y^\ast)$ for every $\lambda\in\Lambda(y,y^\ast;w)$ and therefore the claimed inequality
\[\skalp{w^\ast,w}+\skalp{w, \nabla^2(\lambda^Tg)(y)w}=\skalp{w^\ast,w}+ w^T\nabla^2(\lambda^Tg)(y)w \leq 0\]
follows.
\end{proof}

The following result  will be needed in the proof of Theorem \ref{ThSuffCondMS_FOGE}.
\begin{lemma}\label{LemBndSecOrdMult}Given $\yb\in\Gamma$, assume that MSCQ holds at $\yb$. Then there is a real $\kappa'>0$ such that for any $y\in\Gamma$ sufficiently close to $\yb$, any normal  vector $y^\ast\in\widehat N_\Gamma(y)$ and any critical direction $v\in K(y,y^\ast)$ one has
\begin{equation}\label{EqBndSecOrdMult}\Lambda(y,y^\ast;v)\cap\E(y,y^\ast)\cap \kappa'\norm{y^\ast} \B_{\R^q}\not=\emptyset.\end{equation}
\end{lemma}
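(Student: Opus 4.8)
The plan is to exploit the homogeneity of the assertion, produce a \emph{bounded maximizer} of the linear program defining $\Lambda(y,y^\ast;v)$, and then reduce it to an extreme point without inflating its norm. Since $\Lambda(y,ty^\ast)=t\Lambda(y,y^\ast)$, $K(y,ty^\ast)=K(y,y^\ast)$, $\E(y,ty^\ast)=t\E(y,y^\ast)$ and $\Lambda(y,ty^\ast;v)=t\Lambda(y,y^\ast;v)$ for $t>0$, the claimed inclusion is positively homogeneous of degree one in $y^\ast$, so it suffices to treat $\norm{y^\ast}=1$ and to exhibit a constant independent of $y,y^\ast,v$. Let $\kappa>0$ and the neighborhood $V$ of $\yb$ be those furnished by Theorem \ref{ThTanConeGrNormalCone}; I will show the lemma holds with $\kappa':=\sqrt q\,\kappa$.

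First I would verify that $\Lambda(y,y^\ast;v)\neq\emptyset$, i.e. that $\max\{v^T\nabla^2(\lambda^Tg)(y)v\mv\lambda\in\Lambda(y,y^\ast)\}$ is finite. As $\Lambda(y,y^\ast)$ lies in the nonnegative orthant it contains no line, so it is enough to check that $v^T\nabla^2(\mu^Tg)(y)v\leq0$ for every $\mu$ in its recession cone $R=\{\mu\geq0\mv\nabla g(y)^T\mu=0,\ \mu_i=0\ (i\notin\I(y))\}$. For such $\mu$ one has $\mu^Tg(y)=0$ and $\nabla(\mu^Tg)(y)=\nabla g(y)^T\mu=0$. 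Since $v\in T_\Gamma(y)$, the local error bound provided by MSCQ gives $\dist{y+tv,\Gamma}=o(t)$, hence a curve $y(t)\in\Gamma$ with $y(t)=y+tv+o(t)$; inserting it into a second-order Taylor expansion and using $\mu\geq0$, $g(y(t))\leq0$ yields $0\ge\mu^Tg(y(t))=\tfrac12 t^2\,v^T\nabla^2(\mu^Tg)(y)v+o(t^2)$, whence $v^T\nabla^2(\mu^Tg)(y)v\leq0$. Thus $\Lambda(y,y^\ast;v)\neq\emptyset$. Now any $\bar\lambda\in\Lambda(y,y^\ast;v)$ yields, through the first description in \eqref{EqTanConeGrNormalCone}, a tangent vector $(v,\nabla^2(\bar\lambda^Tg)(y)v)\in T_{\Gr\widehat N_\Gamma}(y,y^\ast)$; the second description of the same cone then forces the existence of a maximizer $\tilde\lambda\in\Lambda(y,y^\ast;v)\cap\kappa\norm{y^\ast}\B_{\R^q}$.

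It remains to pass from the bounded maximizer $\tilde\lambda$ to an extreme point of $\Lambda(y,y^\ast)$ still lying in $\Lambda(y,y^\ast;v)$, keeping the norm controlled. The plan is a Carath\'eodory-type reduction that is monotone in $\sum_i\lambda_i$. If the current maximizer $\lambda$ is not extreme, then $\{\nabla g_i(y)\mv i\in I^+(\lambda)\}$ is linearly dependent, so there is $d\neq0$ with $\nabla g(y)^Td=0$ and $d_i=0$ for $i\notin I^+(\lambda)$; then $\lambda\pm sd\in\Lambda(y,y^\ast)$ for all small $s>0$. Since $\lambda$ maximizes the linear (in $\lambda$) objective and both $\lambda+sd$ and $\lambda-sd$ are feasible, optimality forces $v^T\nabla^2(d^Tg)(y)v=0$, so the whole segment stays in the optimal face $\Lambda(y,y^\ast;v)$. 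Choosing the sign $\epsilon\in\{\pm1\}$ so that $\sum_i\epsilon d_i\le0$ and $\epsilon d$ has a negative component, and moving $s>0$ until a positive coordinate vanishes, strictly shrinks the support while keeping $\lambda\geq0$ and leaving $\sum_i\lambda_i$ nonincreasing. After at most $q$ steps one reaches $\hat\lambda\in\Lambda(y,y^\ast;v)\cap\E(y,y^\ast)$ with $\sum_i\hat\lambda_i\le\sum_i\tilde\lambda_i$, and since $\hat\lambda,\tilde\lambda\ge0$,
\[\norm{\hat\lambda}\le\textstyle\sum_i\hat\lambda_i\le\sum_i\tilde\lambda_i\le\sqrt q\,\norm{\tilde\lambda}\le\sqrt q\,\kappa\norm{y^\ast},\]
which is exactly \eqref{EqBndSecOrdMult} with $\kappa'=\sqrt q\,\kappa$.

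The delicate point is this last step. A vertex of $\Lambda(y,y^\ast)$ has the form $\hat\lambda_I=([\nabla g_i(y)]_{i\in I})^{+}y^\ast$, whose norm is governed by the smallest singular value of the active-gradient matrix, and that singular value may tend to $0$ as $y\to\yb$ (precisely the degeneracy that MSCQ, unlike LICQ, tolerates). Hence a direct singular-value estimate is hopeless, and I expect this to be the main obstacle. The $\ell_1$-monotone reduction circumvents it by anchoring the bound to the \emph{given} bounded maximizer $\tilde\lambda$ rather than to the conditioning of the gradients; the norm equivalence $\norm{\cdot}\le\sum_i(\cdot)_i\le\sqrt q\,\norm{\cdot}$ on the nonnegative orthant of $\R^q$ then delivers a constant that is uniform in $y,y^\ast,v$.
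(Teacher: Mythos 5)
Your proposal is correct and follows essentially the same route as the paper's proof: both extract a bounded maximizer $\tilde\lambda\in\Lambda(y,y^\ast;v)\cap\kappa\norm{y^\ast}\B_{\R^q}$ by playing the two descriptions of $T_{\Gr\widehat N_\Gamma}(y,y^\ast)$ in Theorem \ref{ThTanConeGrNormalCone} against each other, and then pass to an extreme point whose $\ell_1$-norm does not exceed that of $\tilde\lambda$, arriving at the same constant $\kappa'=\sqrt q\,\kappa$. The only cosmetic differences are that the paper cites \cite[Proposition 4.3]{GfrMo15a} for $\Lambda(y,y^\ast;v)\not=\emptyset$ (which you prove directly via the recession cone, implicitly using $\Lambda(y,y^\ast)\not=\emptyset$ from Proposition \ref{PropInclNormalcone}) and obtains the extreme point as an extreme-point solution of the linear program $\min\sum_{i=1}^q\lambda_i$ over $\Lambda(y,y^\ast;v)$, whereas you prove the same fact by hand with an explicit support-reduction argument.
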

\begin{proof}
  Let $\kappa>0$ be chosen according to Theorem \ref{ThTanConeGrNormalCone} and consider $y\in\Gamma$ as close to $\yb$ such that MSCQ holds at $y$ and \eqref{EqTanConeGrNormalCone} is valid for every $y^\ast\in \widehat N_\Gamma(y)$. Consider $y^\ast\in  \widehat N_\Gamma(y)$ and a critical direction $v\in K(y,y^\ast)$. By \cite[Proposition 4.3]{GfrMo15a} we have $\Lambda(y,y^\ast;v)\not=\emptyset$ and, by taking any $\lambda\in \Lambda(y,y^\ast;v)$, we obtain from Theorem \ref{ThTanConeGrNormalCone} that $(v,v^\ast)\in T_{\Gr \widehat N_\Gamma}(y,y^\ast)$ with $v^\ast=\nabla^2(\lambda^Tg)(y)v$. Applying Theorem \ref{ThTanConeGrNormalCone} once more, we see that $v^\ast\in \nabla^2(\tilde\lambda^Tg)(y)v+N_{K(y,y^\ast)}(v)$ with $\tilde\lambda\in\Lambda(y,y^\ast;v)\cap \kappa\norm{y^\ast} \B_{\R^q}$ showing that $\Lambda(y,y^\ast;v)\cap \kappa\norm{y^\ast}\B_{\R^q}\not=\emptyset$. Next consider a solution $\bar\lambda$ of the linear optimization problem
  \[\min\sum_{i=1}^q\lambda_i\quad\mbox{subject to }\lambda\in \Lambda(y,y^\ast;v).\]
  We can choose $\bar\lambda$ as an extreme point of the polyhedron $\Lambda(y,y^\ast;v)$ implying $\bar\lambda\in \E(y,y^\ast)$.
  Since $\Lambda(y,y^\ast;v)\subset \R^q_+$, we obtain
  $$\norm{\bar\lambda}\leq \sum_{i=1}^q\vert\bar\lambda_i\vert=\sum_{i=1}^q \bar \lambda_i\leq \sum_{i=1}^q\tilde\lambda_i\leq \sqrt{q}\norm{\tilde\lambda}\leq\sqrt{q}\kappa\norm{y^\ast},$$ and hence \eqref{EqBndSecOrdMult} follows with $\kappa'=\kappa\sqrt{q}$.
\end{proof}

We are now in {position} to state a verifiable sufficient condition for MSCQ to hold for problem (MPEC).
\begin{theorem}\label{ThSuffCondMS_FOGE}
  Given $(\xb,\yb)\in \Omega$ defined as in (\ref{Omega}), assume that  MSCQ holds both  for the lower level problem constraints $g(y)\leq 0$ at $\yb$ and for the upper  level constraints $G(x,y)\leq0$ at
  $(\xb,\yb)$. Further assume that
  \begin{equation}\label{EqNonDegG}
  \nabla_x G(\xb,\yb)^T\eta=0,\ \eta\in N_{\R^p_-}(G(\xb,\yb))\quad \Longrightarrow\quad \nabla_y G(\xb,\yb)^T\eta=0
  \end{equation}
  and assume that
  there do not exist $(u,v)\not=0$, $\lambda\in\Lambda(\yb,-\phi(\xb,\yb);v)\cap \E(\yb,-\phi(\xb,\yb))$, $\eta\in\R^p_+$ and $w\not=0$ satisfying
  \begin{eqnarray}
    \label{EqSuffMS1}&&\nabla G(\xb,\yb)
   (u,v)\in T_{\R^p_-}(G(\xb,\yb)),\; \\
    \label{EqSuffMS2}
    &&(v,-\nabla_{x}\phi(\xb,\yb)u-\nabla_{y}\phi(\xb,\yb)v)\in T_{\Gr \widehat N_\Gamma}(\yb,-\phi(\xb,\yb)),\\
    \label{EqSuffMS3}&&-\nabla_{x}\phi(\xb,\yb)^Tw+\nabla_{x} G(\xb,\yb)^T\eta=0,\;\eta\in N_{\R^p_-}(G(\xb,\yb)),\; \eta^T\nabla G(\xb,\yb)(u,v)=0,\\
    \label{EqSuffMS4}&&\nabla g_i(\yb)w=0, i\in I^+(\lambda),\; w^T\left (\nabla_{y}\phi(\xb,\yb)+\nabla^2(\lambda^Tg(\yb)\right )w-\eta^T\nabla_y G(\xb,\yb)w\leq 0,\qquad
  \end{eqnarray}
  where the tangent cone $ T_{\Gr \widehat N_\Gamma}(\yb,-\phi(\xb,\yb))$ can be calculated as in Theorem \ref{ThTanConeGrNormalCone}.
Then the multifunction $M_{\rm MPEC}$ defined by
\begin{equation}\label{MPEC}
M_{\rm MPEC}{(x,y)}:=\vek{\phi(x,y)+\widehat N_\Gamma(y)\\G(x,y)-\R^p_-}
\end{equation} is metrically subregular at $\big((\xb,\yb),0\big)$.
\end{theorem}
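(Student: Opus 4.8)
The plan is to cast the constraint system $\Omega$ in the form $P(z)\in D$ and to verify the split first-order test of Theorem \ref{ThSuffCondMSSplit}. Writing $z=(x,y)$, the generalized equation $0\in\phi(x,y)+\widehat N_\Gamma(y)$ is equivalent to $(y,-\phi(x,y))\in\Gr\widehat N_\Gamma$, so I set $P_1(z):=(y,-\phi(x,y))$, $D_1:=\Gr\widehat N_\Gamma$, $P_2(z):=G(x,y)$, $D_2:=\R^p_-$ (the base variable in the graph is $(x,y)$, but its $x$-slot is free since $\widehat N_\Gamma$ depends only on $y$). By Proposition \ref{PropEquGrSubReg} the metric subregularity of $M_{\rm MPEC}$ at $((\xb,\yb),0)$ is equivalent to that of $M(z):=P(z)-D$, so it suffices to treat the latter. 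Since MSCQ holds for $G(x,y)\le0$ at $(\xb,\yb)$, the map $P_2(\cdot)-D_2$ is metrically subregular, which is exactly the standing hypothesis of Theorem \ref{ThSuffCondMSSplit}; hence I only need to establish its first-order condition, whose conclusion is $\lambda^1=0$ for the multiplier $\lambda^1$ attached to $D_1$.

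Next I would unwind that condition. A direction $w=(u,v)$ lies in $T_\Omega^{\rm lin}(\zb)$ precisely when $\nabla P_2(\zb)w\in T_{\R^p_-}(G(\xb,\yb))$ and $\nabla P_1(\zb)w=(v,-\nabla_x\phi\,u-\nabla_y\phi\,v)\in T_{\Gr\widehat N_\Gamma}(\yb,-\phi(\xb,\yb))$, i.e.\ exactly \eqref{EqSuffMS1}--\eqref{EqSuffMS2}, the tangent cone being computed by Theorem \ref{ThTanConeGrNormalCone}. Writing $\lambda^1=(a,b)\in\R^m\times\R^m$ and $\lambda^2=\eta\in\R^p$, the adjoint equation $\nabla P_1(\zb)^T\lambda^1+\nabla P_2(\zb)^T\lambda^2=0$ splits into $\nabla_x\phi^Tb=\nabla_xG^T\eta$ and $a=\nabla_y\phi^Tb-\nabla_yG^T\eta$. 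Because $\R^p_-$ is convex, Proposition \ref{relationship} gives $\eta\in N_{\R^p_-}(G(\xb,\yb))\cap\{\nabla G(\xb,\yb)w\}^\perp$. Setting $w:=b$, the first adjoint relation together with these $\eta$-conditions is exactly \eqref{EqSuffMS3}.

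The crux is to extract \eqref{EqSuffMS4} from the membership $\lambda^1=(a,b)\in N_{\Gr\widehat N_\Gamma}(\bar z;\nabla P_1(\zb)w)$, where $\bar z=(\yb,-\phi(\xb,\yb))$. I would take defining sequences $t_k\downarrow0$, $(v_k,v_k^\ast)\to\nabla P_1(\zb)w$, $(a_k,b_k)\to(a,b)$ with $(a_k,b_k)\in\widehat N_{\Gr\widehat N_\Gamma}(y_k,y_k^\ast)$ at the points $(y_k,y_k^\ast):=\bar z+t_k(v_k,v_k^\ast)$ approaching $\bar z$. Corollary \ref{CorSecOrd} then yields $b_k\in K(y_k,y_k^\ast)$ and $\skalp{a_k,b_k}+b_k^T\nabla^2(\lambda_k^Tg)(y_k)b_k\le0$ for $\lambda_k\in\Lambda(y_k,y_k^\ast;b_k)$; by Lemma \ref{LemBndSecOrdMult} these $\lambda_k$ may be chosen in $\E(y_k,y_k^\ast)$ with $\norm{\lambda_k}\le\kappa'\norm{y_k^\ast}$, hence bounded. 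Passing to a convergent subsequence $\lambda_k\to\lambda$, a second-order Taylor expansion of $\lambda_k^Tg(y_k)$ (mimicking the proof of Theorem \ref{ThTanConeGrNormalCone}, where $(\tilde\lambda^k-\lambda')^Tg(y_k)\ge0$ is divided by $t_k^2$) forces $\lambda\in\Lambda(\yb,-\phi(\xb,\yb);v)\cap\E(\yb,-\phi(\xb,\yb))$, while $b_k\in K(y_k,y_k^\ast)$ via Proposition \ref{criticalcone} gives $\nabla g_i(\yb)b=0$ for $i\in I^+(\lambda)$. Taking the limit in the displayed inequality gives $\skalp{a,b}+b^T\nabla^2(\lambda^Tg)(\yb)b\le0$, and substituting $a=\nabla_y\phi^Tb-\nabla_yG^T\eta$ (whence $\skalp{a,b}=b^T\nabla_y\phi\,b-\eta^T\nabla_yG\,b$) turns this into the inequality in \eqref{EqSuffMS4} with $w=b$. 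This limiting passage — extracting an \emph{extreme-point} directional multiplier $\lambda\in\Lambda(\yb,-\phi;v)\cap\E$ for which the second-order inequality survives — is the main obstacle, since extremality is not automatically preserved under limits and must be controlled through the uniform bound of Lemma \ref{LemBndSecOrdMult} and the finiteness of the possible active index sets.

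Finally I would close by a case distinction on $b=w$. If $b\ne0$, then $(u,v)\ne0$ and $w=b\ne0$ satisfy \eqref{EqSuffMS1}--\eqref{EqSuffMS4}, contradicting the standing nonexistence hypothesis; hence this case cannot occur. If $b=0$, then the second adjoint relation gives $a=-\nabla_yG^T\eta$ and the first gives $\nabla_xG^T\eta=0$ with $\eta\in N_{\R^p_-}(G(\xb,\yb))$, so the nondegeneracy condition \eqref{EqNonDegG} forces $\nabla_yG^T\eta=0$ and therefore $a=0$. In either case $\lambda^1=(a,b)=0$, verifying the first-order test of Theorem \ref{ThSuffCondMSSplit}. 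That theorem yields MSCQ for $P(z)\in D$, i.e.\ metric subregularity of $M(z)=P(z)-D$, and by Proposition \ref{PropEquGrSubReg} the desired metric subregularity of $M_{\rm MPEC}$ at $((\xb,\yb),0)$ follows.
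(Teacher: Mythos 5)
Your scaffolding coincides with the paper's: the same splitting $P_1(x,y)=(y,-\phi(x,y))$, $D_1=\Gr\widehat N_\Gamma$, $P_2=G$, $D_2=\R^p_-$, the reduction via Proposition \ref{PropEquGrSubReg}, the verification of the first-order test of Theorem \ref{ThSuffCondMSSplit}, the use of Corollary \ref{CorSecOrd} and Lemma \ref{LemBndSecOrdMult} along the defining sequences of the directional limiting normal, and the final case distinction on $b$ using \eqref{EqNonDegG}. However, there is a genuine gap exactly at the point you yourself flag as ``the main obstacle'': you assert that the Taylor-expansion argument ``forces'' the limit $\lambda$ of the extreme-point multipliers $\lambda_k\in\Lambda(y_k,y_k^\ast;b_k)\cap\E(y_k,y_k^\ast)$ to lie in $\Lambda(\yb,\yba;v)\cap\E(\yb,\yba)$, where $\yba=-\phi(\xb,\yb)$. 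The expansion does give $\lambda\in\Lambda(\yb,\yba;v)$, but membership in $\E(\yb,\yba)$ does not pass to the limit: extremality means linear independence of the active gradients $\{\nabla g_i(y_k)\mv i\in I^+(\lambda_k)\}$, and linear independence can be destroyed as $y_k\to\yb$ --- precisely in the degenerate situations (nonunique multipliers, failure of NNAMCQ) that this theorem is designed to handle; Example \ref{Ex1} is of this type. Neither the uniform bound from Lemma \ref{LemBndSecOrdMult} nor the finiteness of possible index sets repairs this; worse, a priori the limit $\lb$ may even fail to lie in $\co\E(\yb,\yba)$, i.e.\ it may carry a nonzero recession-cone component of the polyhedron $\Lambda(\yb,\yba)$.

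The paper closes this gap with a two-stage argument that your proposal does not contain. First, the $\lambda_k$ are not taken as arbitrary bounded extreme points: they are chosen as optimal solutions of the linear program \eqref{EqMinL1Norm}, minimizing the coordinate sum over $\Lambda(y_k,y_k^\ast;b_k)$. With this specific choice, a contradiction argument based on LP duality shows $\lb\in\co\E(\yb,\yba)$: assuming $\lb=\lambda^c+\lambda^r$ with a recession direction $\lambda^r\not=0$, one constructs modified multipliers $\tilde\lambda^k$ supported essentially on $I^+(\lambda^r)$ and normals $\xi_k^\ast=\nabla g(y_k)^T\tilde\lambda^k\to 0$, and the dual optimality certificates show that every element of $\Lambda(y_k,\xi_k^\ast;b_k)$ has norm bounded below by a positive constant, contradicting Lemma \ref{LemBndSecOrdMult} because $\norm{\xi_k^\ast}\to 0$. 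Second, having $\lb\in\co\E(\yb,\yba)$, one writes $\lb=\sum_j\alpha_j\hat\lambda^j$ with $\hat\lambda^j\in\E(\yb,\yba)$, $\alpha_j>0$, checks that each $\hat\lambda^j$ belongs to $\Lambda(\yb,\yba;v)$ (the linear objective defining $\Lambda(\yb,\yba;v)$ is maximized at $\lb$, hence at every extreme point appearing with positive weight), and then exploits the fact that the left-hand side of the inequality in \eqref{EqSuffMS4} is affine in $\lambda$ to select one $\hat\lambda^{\bar j}$ for which that inequality --- and, via $I^+(\hat\lambda^{\bar j})\subset I^+(\lb)$, also the condition $\nabla g_i(\yb)w=0$, $i\in I^+(\hat\lambda^{\bar j})$ --- holds. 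Without these two steps, the contradiction you invoke in the case $b\not=0$ is not established, so the proof is incomplete at its central point.
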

\begin{proof}By 
Proposition \ref{PropEquGrSubReg},  it suffices to show that the multifunction  $P(x,y)-D$ with $P$ and $D$ given by
$$ P(x,y):=\left(\begin{array}{c}y,-\phi(x,y)\\G(x,y)\end{array}\right) \mbox{ and } D:=\Gr \widehat N_\Gamma\times\R^p_-$$  is metrically subregular at $\big((\xb,\yb),0\big)$.
 We now  invoke Theorem \ref{ThSuffCondMSSplit} with
 $$P_1(x,y):=(y,-\phi(x,y)),\ P_2(x,y):=G(x,y),\ D_1:=\Gr \widehat N_\Gamma,D_2:=\R^p_-.$$ By the assumption $P_2(x,y)-D_2$ is metrically subregular at {$\big((\xb,\yb),0\big)$}.  Assume to the contrary that $P(\cdot,\cdot)-D$ is not metrically subregular at $\big((\xb,\yb),0\big)$. Then by Theorem \ref{ThSuffCondMSSplit}, there exist $0\not=z=(u,v) \in T^{\rm lin}_\Omega(\xb,\yb)$ and a directional limiting normal $z^\ast=(w^\ast,w,\eta)\in\R^m\times\R^m\times\R^p$  such that $\nabla P(\xb,\yb)^Tz^\ast=0$, $(w^\ast,w)\in N_{\Gr \widehat N_\Gamma}(P_1(\xb,\yb); \nabla P_1(\xb,\yb)z)$, $\eta\in N_{\R^p_-}\big(G(\xb,\yb);\nabla G(\xb,\yb)(u,v)\big)$ and $(w^\ast,w)\not=0$.

  Hence
  \begin{equation}\label{w*eqn}
  0=\nabla P(\xb,\yb)^Tz^\ast=\left(\begin{array}{c}-\nabla_{x}\phi(\xb,\yb)^Tw+\nabla_{x}G(\xb,\yb)^T\eta\\
  w^\ast-\nabla_{y}\phi(\xb,\yb)^Tw+\nabla_y G(\xb,\yb)^T\eta
  \end{array}\right).\end{equation}
Since $z=(u,v) \in T^{\rm lin}_\Omega(\xb,\yb)$, by  the rule of tangents to product sets from Proposition \ref{productset} we obtain
  \[\nabla P(\xb,\yb)z=\left(\begin{array}{c}(v,-\nabla_{x}\phi(\xb,\yb)u-\nabla_{y}\phi(\xb,\yb)v)\\
  \nabla G(\xb,\yb)(u,v)\end{array}\right)\in T_{\Gr \widehat N_\Gamma}(\yb,\yba)\times T_{\R^p_-}\big(G(\xb,\yb)\big),\]
  where $\yba:=-\phi(\xb,\yb)$.
  It follows that $
  (v,-\nabla_{x}\phi(\xb,\yb)u-\nabla_{y}\phi(\xb,\yb)v)\in T_{\Gr \widehat N_\Gamma}(\yb,\yba)$ and consequently by Theorem \ref{ThTanConeGrNormalCone} we have  $v\in K(\yb,\yba)$.
  Further we deduce from Proposition \ref{relationship}  that
  \[\eta\in N_{\R^p_-}(G(\xb,\yb)), \ \eta^T\nabla G(\xb,\yb)(u,v)=0.\]
  So far we have shown  that $u,v,\eta,w$ fulfill \eqref{EqSuffMS1}-\eqref{EqSuffMS3}. Further we have $w\not=0$, because if $w=0$ then by virtue of \eqref{EqNonDegG} and \eqref{w*eqn} we would obtain $\nabla_xG(\xb,\yb)^T\eta=0$, $\nabla_yG(\xb,\yb)^T\eta=0$ and consequently $w^\ast=0$ contradicting $(w^\ast,w)\not=0$. If  we can show the existence of $\lambda\in\Lambda(\yb,\yba;v)\cap \E(\yb,\yba)$ such that \eqref{EqSuffMS4} holds,  then
  we have obtained the desired contradiction to our assumptions, and this would complete the proof.

Since $(w^\ast,w)\in N_{\Gr \widehat N_\Gamma}(P_1(\xb,\yb); \nabla P_1(\xb,\yb)z)$,   by
the  definition of the directional limiting normal cone, there are sequences $t_k\downarrow 0$, $d_k=(v_k,v_k^\ast)\in\R^m\times\R^m$ and $(w_k^\ast,w_k)\in\R^m\times\R^m$  satisfying $(w_k^\ast,w_k)\in\widehat N_{\Gr \widehat N_\Gamma}(P_1(\xb,\yb)+t_kd_k)$ $\forall k$ and $(d_k,w_k^\ast,w_k)\to(\nabla P_1(\xb,\yb)z,w^\ast,w)$. That is,  $(y_k,y_k^\ast):=(\yb,\yba)+t_k(v_k,v_k^\ast)\in \Gr \widehat N_\Gamma$, 
 $(w_k^\ast,w_k)\in \widehat N_{\Gr \widehat N_\Gamma}(y_k,y_k^\ast)$
and $(v_k, v_k^\ast)\to (v, -\nabla_{x}\phi(\xb,\yb)u-\nabla_{y}\phi(\xb,\yb)v)$.
  By passing to a subsequence if necessary, we can assume that MSCQ holds for $g(y)\leq 0$ at $y_k$ for all $k$ and by invoking Corollary \ref{CorSecOrd} we obtain $w_k\in K(y_k,y_k^\ast)$, and
  \begin{equation}\label{EqSecOrdAux1}{w_k^\ast}^Tw_k+w_k^T\nabla^2(\lambda^T g)(y_k)w_k\leq 0 \mbox{ whenever }\lambda\in\Lambda(y_k,y_k^\ast;w_k).
  \end{equation}
  By Lemma \ref{LemBndSecOrdMult} we can find a uniformly bounded sequence $\lambda^k\in\Lambda(y_k,y_k^\ast;w_k)\cap \E(y_k,y_k^\ast)$. In particular, following from the proof of Lemma \ref{LemBndSecOrdMult}, we can choose $\lambda^k$ as an optimal solution of the linear optimization problem
  \begin{equation}\label{EqMinL1Norm}\min\sum_{i=1}^q\lambda_i\;\mbox{ subject to }\;\lambda\in \Lambda(y_k,y_k^\ast;w_k).
  \end{equation}
  By passing once more to a subsequence if necessary, we can assume that $\lambda^k$ converges to $\lb$, and we easily conclude $\lb\in\Lambda(\yb,\yba)$ and
  ${w^\ast}^Tw+w^T\nabla^2(\lb^T g)(\yb)w\leq 0$, which  together with $w^\ast-\nabla_{y}\phi(\xb,\yb)^Tw+\nabla_y G(\xb,\yb)^T\eta=0$ (see (\ref{w*eqn})) results in
  \begin{equation}
    \label{EqSecOrdAux2}w^T\left (\nabla_{y}\phi(\xb,\yb)+\nabla^2(\bar\lambda^Tg)(\yb)\right )w-\eta^T\nabla_y G(\xb,\yb)w\leq 0.
  \end{equation}
  Further, we can assume that $I^+(\lb)\subset I^+(\lambda^k)$ and therefore, because of $\lambda^k\in N_{\R^q_-}(g(y_k))$, $\lb^Tg(y_k)={\lambda^k}^Tg(y_k)=0$. Hence for every $\lambda\in\Lambda(\yb,\yba)$ we obtain
  \begin{eqnarray*}
  0&\geq& (\lambda-\lb)^Tg(y_k)\\
  &=&(\lambda-\lb)^Tg(\yb)+\nabla((\lambda-\lb)^Tg)(\yb)(y_k-\yb)\\
  &&\quad+\frac 12 (y_k-\yb)^T\nabla^2((\lambda-\lb)^Tg)(\yb)(y_k-\yb) +\oo(\norm{y_k-\yb}^2)\\
  &=&\frac{t_k^2}2 v_k^T\nabla^2((\lambda-\lb)^Tg)(\yb)v_k+\oo(t_k^2\norm{v_k}^2).
  \end{eqnarray*}
  Dividing by $t_k^2/2$ and passing to the limit yields $0\geq v^T\nabla^2((\lambda-\lb)^Tg)(\yb)v$ and thus $\lb\in\Lambda(\yb,\yba;v)$. Since   $w_k\in K(y_k,y_k^\ast)$ by Proposition \ref{criticalcone} we have $\nabla g_i(y_k)w_k=0$, $i\in I^+(\lambda^k)$  from which $\nabla g_i(\yb)w=0$, $i\in I^+(\lb)$ follows.

 It is known that the  polyhedron $\Lambda( \yb, \yb^*)$ can be represented as the sum  of the convex hull of its extreme points $ \E(\yb,\yba)$ and its recession cone ${\cal R}:=\{\lambda  \in N_{\R^q_-}(g(\yb))| \nabla g(\yb)^T\lambda=0 \}$.  We  show by contradiction that $\lb\in\co \E(\yb,\yba)$. Assuming on the contrary that $\lb\not\in \co \E(\yb,\yba)$, then $\lb$ has the representation $\lb=\lambda^c+\lambda^r$ with $\lambda^c\in \co \E(\yb,\yba)$ and $\lambda^r\not=0$ belongs to the recession cone ${\cal R}$, i.e.
  \begin{equation}\label{recessioncone}
  \lambda^r\in N_{\R^q_-}(g(\yb)),\ \nabla g(\yb)^T\lambda^r=0.\end{equation}
  Since $\lambda^k\in\Lambda(y_k,y_k^\ast;w_k)$, it is a solution to the linear program:
  \begin{eqnarray*}
  \max_{\lambda\geq 0} &&  w_k^T\nabla^2(\lambda^Tg)(y_k)(w_k)\\
  s.t. && \nabla g(y_k)^T\lambda=y_k^*\\
  && \lambda^Tg(y_k)=0.
  \end{eqnarray*}
   By duality theory of linear programming, for each $k$ there is some $r_k\in\R^m$ verifying
  \[\nabla g_i(y_k)r_k+w_k^T\nabla^2 g_i(y_k)w_k\leq 0,\ \lambda^k_i(\nabla g_i(y_k)r_k+w_k^T\nabla^2 g_i(y_k)w_k)=0,\ i\in \I(y_k).\]
  Since $\Lambda(y_k,y_k^\ast;w_k)=\{\lambda\in\Lambda(y_k,y_k^\ast)\mv w_k^T\nabla^2(\lambda^Tg)({y_k})w_k\geq\theta(y_k,y_k^\ast;w_k)\}$ and $\lambda^k$ solves \eqref{EqMinL1Norm}, again by duality theory of linear programming we can find for each $k$ some $s_k\in\R^m$ and $\beta_k\in\R_+$ such that
  \[\nabla g_i(y_k)s_k+\beta_k w_k^T\nabla^2 g_i(y_k)w_k\leq 1,\ \lambda^k_i(\nabla g_i(y_k)s_k+\beta_k w_k^T\nabla^2 g_i(y_k)w_k-1)=0,\ i\in \I(y_k).\]
  Next we define for every $k$ the elements $\tilde \lambda^k\in\R^q_+$, $\xi_k^\ast\in\R^m$ by
  \begin{eqnarray}
  && \tilde\lambda^k_i:=\begin{cases}
    \lambda^r_i&\mbox{if $i\in I^+(\lambda^r)$,}\\
    \frac 1k&\mbox{if $i\in I^+(\lambda^k)\setminus I^+(\lambda^r)$,}\\
    0&\mbox{else,}\quad
  \end{cases} \nonumber\\
&&  \xi_k^\ast:=\nabla g(y_k)^T\tilde\lambda^k.\label{xi}
\end{eqnarray}
  Since $I^+(\lambda^r)\subset I^+(\lb)\subset I^+(\lambda^k)$, we obtain $I^+(\tilde\lambda^k)=I^+(\lambda^k)$, $\tilde\lambda^k\in N_{\R^q_-}(g(y_k))$  and $\xi_k^\ast\in N_\Gamma(y_k)$.
   Thus $w_k\in K(y_k,\xi_k^\ast)$ by Proposition \ref{criticalcone} and
  \[\nabla g_i(y_k)r_k+w_k^T\nabla^2 g_i(y_k)w_k\leq 0,\ \tilde\lambda^k_i(\nabla g_i(y_k)r_k+w_k^T\nabla^2 g_i(y_k)w_k)=0,\ i\in \I(y_k)\]
  implying $\tilde\lambda^k\in\Lambda(y_k,\xi_k^\ast;w_k)$ by duality theory of linear programming. Moreover, because of $I^+(\tilde\lambda^k)=I^+(\lambda^k)$ we also have
  \[\nabla g_i(y_k)s_k+\beta_k w_k^T\nabla^2 g_i(y_k)w_k\leq 1,\ \tilde\lambda^k_i(\nabla g_i(y_k)s_k+\beta_k w_k^T\nabla^2 g_i(y_k)w_k-1)=0,\ i\in \I(y_k),\]
  implying that $\tilde\lambda^k$ is solution of the linear program
  \[\min\sum_{i=1}^q\lambda_i\;\mbox{ subject to }\;\lambda\in \Lambda(y_k,\xi_k^\ast;w_k),\]
  and, together with $\Lambda(y_k,\xi_k^\ast;w_k)\subset \R^q_+$,
  \[\min\{\norm{\lambda}\mv \lambda\in \Lambda(y_k,\xi_k^\ast;w_k)\}\geq \frac 1{\sqrt{q}}\min\{\sum_{i=1}^q\lambda_i\mv\lambda\in \Lambda(y_k,\xi_k^\ast;w_k)\}{\geq}\frac{\sum_{i=1}^q\lambda_i^r}{\sqrt{q}}:=\beta>0.\]
  Taking into account that $\lim_{k\to\infty}\tilde\lambda^k=\lambda^r$ and (\ref{recessioncone}), (\ref{xi}), we conclude $\lim_{k\to\infty}\norm{\xi_k^\ast}=0$, showing that for every real $\kappa'$ we have
  \[\Lambda(y_k,\xi_k^\ast;w_k)\cap \E(y_k,\xi_k^\ast)\cap\kappa'\norm{\xi_k^\ast} \B_{\R^q}\subset \Lambda(y_k,\xi_k^\ast;w_k)\cap \kappa'\norm{\xi_k^\ast}  \B_{\R^q}=\emptyset\]
  for all $k$ sufficiently large contradicting the statement of Lemma \ref{LemBndSecOrdMult}. Hence $\lb\in \co\E(\yb,\yba)$ and thus $\lb$ admits a representation as convex combination
  \[\lb=\sum_{j=1}^N\alpha_j{\hat\lambda^j}\;\mbox{ with }\; \sum_{j=1}^N \alpha_j=1,\;0<\alpha_j\leq 1,\; {\hat\lambda^j}\in\E(\yb,\yba),\;j=1,\ldots,N.\]
  Since $\lb\in \Lambda(\yb,\yba;v)$ we have $\theta(\yb,\yba;v)=v^T\nabla^2(\lb^Tg)(\yb)v=\sum_{j=1}^N \alpha_j v^T\nabla^2({\hat\lambda{}^j}^Tg)(\yb)v$ implying, together with $v^T\nabla^2({\hat\lambda{}^j}^T g)(\yb)v\leq \theta(\yb,\yba;v)$, that $v^T\nabla^2({{\hat\lambda{}^j}}^Tg)(\yb)v= \theta(\yb,\yba;v)$ and consequently ${\hat\lambda^j}\in \Lambda(\yb,\yba;v)$. It follows from (\ref{EqSecOrdAux2}) that
  \begin{eqnarray*}\lefteqn{\sum_{j=1}^N\alpha_j\left( w^T\big (\nabla_{y}\phi(\xb,\yb)+\nabla^2({\hat\lambda{}^j}^Tg)(\yb)\big )w-\eta^T\nabla_y G(\xb,\yb)w\right)}\\
  &&=w^T\left (\nabla_{y}\phi(\xb,\yb)+\nabla^2(\bar\lambda^Tg)(\yb)\right )w-\eta^T\nabla_y G(\xb,\yb)w\leq 0\end{eqnarray*}
  and hence there exists some index $\bar j$ with
  $$w^T\left (\nabla_{y}\phi(\xb,\yb)+\nabla^2({\hat\lambda{}^{\bar j}}^Tg)(\yb)\right )w-\eta^T\nabla_y G(\xb,\yb)w\leq 0.$$
  Further, by Proposition \ref{criticalcone} we have $\nabla g_i(\yb)w=0$ $\forall i\in I^+(\lb)\supset I^+({\hat\lambda^{\bar j}})$ and we see that \eqref{EqSuffMS4} is fulfilled with $\lambda={\hat\lambda^{\bar j}}$.
\end{proof}

\if{
By virtue of \cite[Theorem 3.2]{YeYe}, we have the following necessary optimality condition for (MPEC).
\begin{proposition}
 Let $(\xb,\yb)\in \Omega$ defined as in (\ref{Omega}) be a local optimal solution of problem (MPEC) where all functions $F, \phi, G$ are continuously differentiable and $g$ is twice continuously differentiable. Suppose all constraint qualifications in Theorem \ref{ThSuffCondMS_FOGE} hold at $(\xb,\yb)$. Then there are multipliers $(\mu, \nu)\in \mathbb{R}^m\times \mathbb{R}^p$ and $\varrho\in \mathbb{R}^p$ which solve the system
 \begin{eqnarray*}
 && 0=\nabla_x F(\bar x, \bar y)-\nabla_x \phi(\bar x,\bar y)^T \nu +\nabla_x G(\bar x,\bar y)^T \varrho,\\
 && 0=\nabla_y F(\bar x, \bar y)-\nabla_y\phi(\bar x,\bar y)^T \nu +\mu+\nabla_y G(\bar x,\bar y)^T \varrho,\\
 && (\mu, \nu) \in N_{gph\widehat{N}_\Gamma}(\bar y, -\phi(\bar x,\bar y)),\\
 &&\varrho\in N_{\mathbb{R}^p_-}(G(\bar x,\bar y)).
 \end{eqnarray*}
\end{proposition}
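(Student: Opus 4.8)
The plan is to derive M-stationarity by combining Fermat's rule with the upper estimate for the limiting normal cone $N_\Omega$ that becomes available once MSCQ is known to hold. Since $(\xb,\yb)$ is a local minimizer of the smooth objective $F$ over the feasible set $\Omega$, Fermat's rule \cite[Theorem 10.1]{RoWe98} gives $0\in\nabla F(\xb,\yb)+\widehat N_\Omega(\xb,\yb)$, hence
\[-\nabla F(\xb,\yb)\in\widehat N_\Omega(\xb,\yb)\subset N_\Omega(\xb,\yb).\]
The entire proof then reduces to producing a usable outer representation of $N_\Omega(\xb,\yb)$ in terms of the separate normal cones to $\Gr\widehat N_\Gamma$ and to $\R^p_-$.

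To that end I would recast $\Omega$ in the composite form $\Omega=\{(x,y)\mv P(x,y)\in D\}$ exactly as in the proof of Theorem \ref{ThSuffCondMS_FOGE}, with $P(x,y):=\big((y,-\phi(x,y)),G(x,y)\big)$ and $D:=\Gr\widehat N_\Gamma\times\R^p_-$. Under the stated hypotheses Theorem \ref{ThSuffCondMS_FOGE} guarantees that $M_{\rm MPEC}$ is metrically subregular at $\big((\xb,\yb),0\big)$, and by Proposition \ref{PropEquGrSubReg} this is equivalent to metric subregularity of $P(\cdot)-D$, i.e. MSCQ holds at $(\xb,\yb)$ for the system $P(z)\in D$. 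Proposition \ref{PropInclNormalcone}, inclusion (\ref{EqInclLimNormalCone}), then applies and yields
\[N_\Omega(\xb,\yb)\subset\nabla P(\xb,\yb)^T N_D(P(\xb,\yb)).\]

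Because $D$ is a Cartesian product, the limiting normal cone factorizes by the standard product rule \cite[Proposition 6.41]{RoWe98} as $N_D(P(\xb,\yb))=N_{\Gr\widehat N_\Gamma}(\yb,-\phi(\xb,\yb))\times N_{\R^p_-}(G(\xb,\yb))$. Combining the last three displays, there exist $(\mu,\nu)\in N_{\Gr\widehat N_\Gamma}(\yb,-\phi(\xb,\yb))$ and $\varrho\in N_{\R^p_-}(G(\xb,\yb))$ with $-\nabla F(\xb,\yb)=\nabla P(\xb,\yb)^T(\mu,\nu,\varrho)$. It then remains only to transpose the block Jacobian
\[\nabla P(\xb,\yb)=\begin{pmatrix}0&I_m\\-\nabla_x\phi(\xb,\yb)&-\nabla_y\phi(\xb,\yb)\\\nabla_xG(\xb,\yb)&\nabla_yG(\xb,\yb)\end{pmatrix}\]
and read off the two coordinate blocks: the $x$-block gives $0=\nabla_xF-\nabla_x\phi^T\nu+\nabla_xG^T\varrho$ and the $y$-block gives $0=\nabla_yF+\mu-\nabla_y\phi^T\nu+\nabla_yG^T\varrho$, together with the memberships $(\mu,\nu)\in N_{\Gr\widehat N_\Gamma}(\yb,-\phi(\xb,\yb))$ and $\varrho\in N_{\R^p_-}(G(\xb,\yb))$. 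These are precisely the asserted relations.

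The only substantive ingredient is that MSCQ is genuinely in force, and this is exactly what the hypotheses import from Theorem \ref{ThSuffCondMS_FOGE}; everything else is Fermat's rule, the product rule for limiting normal cones, and a transpose computation. The main point to handle with care is the bookkeeping of which slot of the pair $(\mu,\nu)$ is dual to the $y$-component of $P_1$ and which to the $-\phi$-component, so that the $+\mu$ term correctly lands on the $y$-equation and nowhere else. No boundedness of the multipliers is needed, since only their existence is claimed; accordingly the sharper estimate of Theorem \ref{ThBMP} is not required here, although it could be invoked if a quantitative bound on $(\mu,\nu,\varrho)$ were desired.
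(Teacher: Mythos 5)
Your proof is correct, but it takes a different route from the paper: the paper establishes this proposition simply by invoking \cite[Theorem 3.2]{YeYe}, which states that calmness of the perturbed constraint mapping (equivalently, metric subregularity of $M_{\rm MPEC}$, delivered by Theorem \ref{ThSuffCondMS_FOGE}) is a constraint qualification guaranteeing M-stationarity of local minimizers. You instead unpack that external citation into the paper's own toolkit: Fermat's rule, the transfer of subregularity between the two representations via Proposition \ref{PropEquGrSubReg} (exactly as in the first step of the proof of Theorem \ref{ThSuffCondMS_FOGE}), the Henrion--Jourani--Outrata upper estimate \eqref{EqInclLimNormalCone} from Proposition \ref{PropInclNormalcone}, the exact product formula $N_{D_1\times D_2}=N_{D_1}\times N_{D_2}$ for limiting normals, and a transpose computation that you carry out correctly (your derivation in fact confirms $\nu\in\R^m$, dual to the $-\phi$ slot, so the statement's ``$(\mu,\nu)\in\R^m\times\R^p$'' is a typo for $\R^m\times\R^m$). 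What the paper's route buys is brevity; what yours buys is a self-contained argument that makes transparent exactly where MSCQ enters (through \cite[Theorem 4.1]{HenJouOut02}) and would, via Theorem \ref{ThBMP}, even yield a quantitative bound on the multipliers, as you observe. The only point to flag is that Proposition \ref{PropInclNormalcone} requires $D$ closed, and $\Gr\widehat N_\Gamma$ need not be closed globally; under MSCQ for $g(y)\leq 0$ at $\yb$ it is closed locally around $(\yb,-\phi(\xb,\yb))$ (by the bounded multiplier property), which suffices since all the properties involved are local --- this is the same implicit convention the paper adopts when applying Theorem \ref{ThSuffCondMSSplit} with $D_1=\Gr\widehat N_\Gamma$, so it is not a gap relative to the paper's own standards, though a one-line remark would make your argument airtight.
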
}\fi

\begin{example}[Example \ref{Ex1} revisited]  \label{Ex1GE}
Instead of reformulating the MPEC as a (MPCC), we consider the MPEC in the original form (MPEC).
Since for the constraints $g(y)\leq 0$ of the lower level problem MFCQ is fulfilled at $\yb$ and the gradients of the upper level constraints $G(x,y)\leq 0$ are linearly independent, MSCQ holds for both constraint systems. Condition \eqref{EqNonDegG} is obviously fulfilled due to $\nabla_y G(x,y)=0$. Setting $\yba:=-\phi(\xb,\yb)=(0,0,1)$, as in Example \ref{Ex1} we  obtain
\[ \Lambda(\yb,\yba)=\{(\lambda_1,\lambda_2)\in\R^2_+\mv\lambda_1+\lambda_2=1\}.\]
Since $\nabla g_1(\yb)=\nabla g_2(\yb)=(0,0,1)$ and for every $\lambda\in\Lambda(\yb,\yba)$ either $\lambda_1>0$ or $\lambda_2>0$, we deduce
\[W(\lambda):=\{w\in\R^3\mv \nabla g_i(\yb)w=0,\;i\in I^+(\lambda)\}={\mathbb{R}^2}\times\{0\}\ \  \forall \lambda\in\Lambda(\yb,\yba).\]
Since
\[w^T\left (\nabla_{y}\phi(\xb,\yb)+\nabla^2(\lambda^Tg)(\yb)\right )w-\eta^T\nabla_y G(\xb,\yb)w=(1+\lambda_1)w_1^2+(1+\lambda_2)w_2^2\geq 0\]
 there cannot exist $0 \not=w \in W(\lambda)$  and $\lambda\in\Lambda(\yb,\yba)$ fulfilling \eqref{EqSuffMS4}. Hence by virtue of Theorem \ref{ThSuffCondMS_FOGE}, MSCQ holds at $(\xb, \yb)$.

\if{ we easily obtain
\[K(\yb,\yba)=\R^2\times\{0\}, \  \Lambda(\yb,\yba)=\{(\lambda_1,\lambda_2)\in\R^2_+\mv\lambda_1+\lambda_2=1\}.\]
Since $\nabla g_1(\yb)=\nabla g_2(\yb)=(0,0,1)$ and for every $\lambda\in\Lambda(\yb,\yba)$ either $\lambda_1>0$ or $\lambda_2>0$, we deduce
\[\{w\in\R^3\mv \nabla g_i(\yb)w=0,\;i\in I^+(\lambda)\}=R^2\times\{0\}\ \forall \lambda\in\Lambda(\yb,\yba)\]
Since
\[w^T\left (\nabla_{y}\phi(\xb,\yb)+\nabla^2(\lambda^Tg)(\yb)\right )w-\eta^T\nabla_y G(\xb,\yb)w=(1+\lambda_1)w_1^2+(1+\lambda_2)w_2^2\]
 there cannot exist $w\not=0$}\fi
\end{example}

\section*{Acknowledgements}
The research of the first author was supported by the Austrian Science Fund (FWF) under grants P26132-N25 and P29190-N32. The research of the second author was partially supported by NSERC.
{The authors would like to thank the two anonymous reviewers for their extremely careful review and valuable comments that have helped 
{us to improve} the presentation of the manuscript.}


\begin{thebibliography}{99}
\bibitem{Ad-Hen-Out} {\sc L. Adam, R. Henrion, J. Outrata}, {\em On M-stationarity conditions in MPECs and the associated qualification conditions}, preprint (2016).
\bibitem{Bouza} {\sc G.B. Allende, G. Still}, {\em Solving bilevel programs with the KKT-approach}, Math. Program., 138 (2013), pp. 309-332.
\bibitem{AndHaeSchSilMP} {\sc R. Andreani, G. Haeser, M.L. Schuverdt, P.J.S.  Silva}, {\em A relaxed constant positive linear dependence constraint qualification and applications}, Math. Program. 135(2012),  pp. 255--273.
\bibitem{AndHaeSchSilSIAM} {\sc R. Andreani, G. Haeser, M.L. Schuverdt, P.J.S.  Silva}, {\em Two new weak constraint qualification and applications}, SIAM J. Optim., 22(2012),  pp. 1109--1135.

%

\bibitem{Aubin} {\sc J.-P. Aubin}, {\em Lipschitz behavior of solutions to convex minimization problems}, Math. Oper. Res., 9 (1994), pp. 87--111.
%

\bibitem{ChiHi16}{\sc N.H. Chieu, L.V. Hien}, {\em Computation of graphical derivative for a class of
normal cone mappings under a very weak condition}, preprint (2016){, to appear in SIOPT}.
%
\bibitem{Clarke} {\sc F.H. Clarke}, {\em Optimization and Nonsmooth Analysis}, Wiley-Interscience, New York, 1983.

\bibitem{Dam-Dut} {\sc S. Dempe, J. Dutta}, {\em Is bilevel programming a special case of a mathematical program with complementarity constraints?}, Math. Program., 131 (2012), pp. 37-48.
\bibitem{DoRo04}
{\sc A.L. Dontchev, R.T. Rockafellar}, {\em  Regularity and conditioning of solution mappings in variational analysis}, Set-Valued Var. Anal.,   12 (2004), pp.~ 79-109.
\bibitem{DoRo14}
{\sc A.L. Dontchev, R.T. Rockafellar}, {\em  Implicit Functions and
Solution Mappings}, Springer, Heidelberg, 2014.
%
%
\bibitem{FleKan05}{\sc M.L. Flegel, C. Kanzow}, {\em Abadie-type constraint qualification for mathematical programs with equilibrium constraints}, J. Optim. Theory Appl., 124 (2005), pp. 595-614.
\bibitem{FleKanOut07}{\sc M.L. Flegel, C. Kanzow, J.V. Outrata}, {\em Optimality conditions for disjunctive programs
with application to mathematical programs with equilibrium constraints}, Set-Valued Anal., 15 (2007), pp.~139--162.
%
%
\bibitem{Gfr11}{\sc H. Gfrerer}, {\em First order and second order characterizations of metric subregularity and calmness of constraint set mappings}, SIAM J. Optim., 21 (2011), pp.~1439--1474.
%
\bibitem{Gfr13a}{\sc H. Gfrerer}, {\em On directional metric regularity, subregularity and optimality conditions for nonsmooth mathematical programs}, Set-Valued Var. Anal., 21 (2013), pp.~ 151--176.
%
%
\bibitem{Gfr14b}{\sc H. Gfrerer}, {\em Optimality conditions for disjunctive programs based on generalized differentiation with application to mathematical programs with equilibrium constraints}, SIAM J. Optim., 24 (2014), pp.~898--931.
%
%
%
\bibitem{GfrKl16}{\sc H. Gfrerer, D.Klatte}, {\em Lipschitz and H\"older stability of optimization problems
and generalized equations}, Math. Program.,  158 (2016), pp.~35--75.
%
\bibitem{GfrMo15a}{\sc H. Gfrerer, B.S. Mordukhovich}, {\em Complete characterizations of tilt stability in nonlinear programming under weakest qualification conditions},  SIAM J. Optim., 25 (2015), pp.~2081--2119.
%
\bibitem{GfrMo16}{\sc H. Gfrerer, B.S. Mordukhovich}, {\em Robinson stability of parametric constraint systems via variational analysis}, {to appear in SIOPT, preprint available at arXiv:1609.02238}.
%
\bibitem{GfrOut14}{\sc H. Gfrerer, J.V. Outrata}, {\em  On computation of generalized derivatives of the normal-cone mapping and their applications}, { Math. Oper. Res. 41 (2016), pp.~1535--1556.}
%
\bibitem{GfrOut14b}{\sc H. Gfrerer and J.V. Outrata}, {\em On computation of limiting coderivatives of the normal-cone mapping to inequality
systems and their applications}, Optimization, DOI 10.1080/02331934.2015.1066372.
%
\bibitem{GfrOut15}{\sc H. Gfrerer, J.V. Outrata}, {\em On Lipschitzian properties of implicit multifunctions}, SIAM J. Optim., 26 (2016), pp.~2160--2189.
%
%
\bibitem{HenJouOut02}{\sc R. Henrion, A. Jourani, J.V. Outrata}, {\em On the calmness of a class of
multifunctions}, SIAM J. Optim., 13 (2002), pp.~603--618.
%
%
\bibitem{HenOut05}{\sc R. Henrion, J.V. Outrata},  {\em Calmness of constraint systems with
applications}, Math. Program.,  104 (2005), pp.~437--464.
%
\bibitem{guoyezhang-infinite} {\sc L. Guo, J.J. Ye, J. Zhang}, {\em Mathematical programs with geometric constraints in Banach spaces: enhanced optimality, exact penalty, and sensitivity},  SIAM J. Optim., 23 (2013), pp. 2295--2319.

\bibitem{GuoZhangLin} {\sc L. Guo, J. Zhang, G-H. Lin}, {\em New results on constraint qualifications for nonlinear extremum problems and extensions}, J. Optim. Theory Appl., 163 (2014), pp. 737-754.


\bibitem{Ioffe79} {\sc A.D. Ioffe}, {\em Regular points of Lipschitz functions},  Trans. Amer. Math. Soc., 251 (1979), pp. 61-69.

\bibitem{Luo-Pang-Ralph} {\sc Z-Q. Luo, J-S. Pang}, {\em Mathematical Programs with Equilibrium Constraints}, Cambridge University Press, 1996.

\bibitem{Mehlitz-Wachsmuth} {\sc P. Mehlitz and G. Wachsmuth}, {\em Weak and strong stationarity in  generalized bilevel programming and bilevel optimal control}, Optimization, 65 (2016), pp. 907-935.
\bibitem{Mor} {\sc B.S. Mordukhovich}, {\em Variational Analysis and Generalized Differentiation, Vol. 1: Basic Theory, Vol. 2: Applications}, Springer, Berlin, 2006.
\bibitem{Out-Koc-Zowe} {\sc J.V. Outrata, M. Ko\v{c}vara, J. Zowe}, {\em Nonsmooth Approach to Optimization Problems with Equilibrium  Constraints: Theory, Applications and Numerical Results}, Kluwer Academic Publishers, Dordrecht, The Netherlands, 1998.
\bibitem{Robinson75}  {\sc S.M. Robinson}, {\em Stability theory for systems of inequality constraints, part I: linear systems}, SIAM J. Numer. Anal., 12 (1975), pp. 754-769.
\bibitem{Robinson76}  {\sc S.M. Robinson}, {\em Stability theory for systems of inequality constraints, part II: differentiable nonlinear systems}, SIAM J. Numer. Anal., 13(1976), pp. 497-513.
\bibitem{Robinson81} {\sc S.M. Robinson}, {\em Some continuity properties of polyhedral multifunctions}, SIAM J. Control Optim., 28 (1981), pp. 206-214.
%
\bibitem{RoWe98}{\sc R.T. Rockafellar, R.J-B. Wets}, {\em
Variational analysis}, Springer, Berlin, 1998.
\bibitem{ScheelScholtes} {\sc H. Scheel, S. Scholtes}, {\em Mathematical programs with complementarity constraints: stationarity, optimality, and sensitivity}, Math. Oper. Reser., 25 (2000), pp. 1-22.
%
\bibitem{Ye} {\sc J.J. Ye}, {\em Constraint qualification and necessary optimality conditions for optimization problems with variational inequality constraints}, SIAM J. Optim., 10 (2000), pp. 943-962.
\bibitem{YeYe}{\sc J.J. Ye, X.Y. Ye}, {\em Necessary optimality conditions for optimization problems with variational inequality constraints},  Math. Oper. Res., 22 (1997), pp. 977-997.
\end{thebibliography}
 \end{document}